\def\blfootnote{\gdef\@thefnmark{}\@footnotetext}
\numberwithin{equation}{section} % Number equations within sections (i.e. 1.1, 1.2, 2.1, 2.2 instead of 1, 2, 3, 4)
\numberwithin{figure}{section} % Number figures within sections (i.e. 1.1, 1.2, 2.1, 2.2 instead of 1, 2, 3, 4)
\numberwithin{table}{section} % Number tables within sections (i.e. 1.1, 1.2, 2.1, 2.2 instead of 1, 2, 3, 4)
\newcommand\isoto{\xrightarrow{\,\smash{\raisebox{-0.65ex}{\ensuremath{\scriptstyle\sim}}}\,}}
\newcommand\longisoto{\xrightarrow{\;\,\smash{\raisebox{-0.65ex}{\ensuremath{\scriptstyle\sim}}}\;\,}}
\newcommand{\Hom}{\text{Hom}}
\newcommand{\End}{\text{End}}
\newcommand{\Ext}{\text{Ext}}
\newcommand{\Ker}{\text{Ker}}
\newcommand{\into}{\hookrightarrow}
\newcommand{\ur}{\underline{r}}
\newcommand{\ud}{\underline{\epsilon}}
\newcommand{\us}{\underline{s}}
\newcommand{\uinfty}{\underline{\infty}}
\newcommand{\Ind}{\text{Ind}}
\newcommand{\Res}{\text{Res}}
\newcommand{\tr}{\text{tr}}
\newcommand{\cato}{\mathcal{O}}
\newcommand{\catO}{\mathcal{O}}
\newcommand{\mf}[1]{\mathfrak{#1}}
\newcommand{\mc}[1]{\mathcal{#1}}
\newcommand{\onto}{\twoheadrightarrow}
\newcommand{\ZZ}{\mathbb{Z}}
\newcommand{\NN}{\mathbb{N}}
\newcommand{\QQ}{\mathbb{Q}}
\newcommand{\parity}{\epsilon}
\newtheoremstyle{all}%  |?name>
  {11pt}%      |?Space above>
  {11pt}%      |?Space below>
  {\slshape}%         |?Body font>
  {}%         |?Indent amount>|ntnote1
  {\bfseries}% |?Theorem head font>
  {}%        |?Punctuation after theorem head>
  {.5em}%     |?Space after theorem head>|ntnote2
  {}%         |?Theorem head spec (can be left empty, meaning `normal')>
\theoremstyle{all}
\newtheorem{stable}{Definition}[section]
\newtheorem{hinfty}[stable]{Definition}
\newtheorem{pinftydef}[stable]{Definition}
\newtheorem*{conv}{Conventions.}
\newtheorem{trunc}[stable]{Proposition}
\newtheorem{prinj}[stable]{Lemma}
\newtheorem{hecketensor}[stable]{Proposition}
\newtheorem{Uprop}[stable]{Theorem}
\newtheorem{Lem4.2}[stable]{Lemma}
\newtheorem{++hwc}[stable]{Proposition}
\newtheorem{tilt}[stable]{Proposition}
\newtheorem{tradj}[stable]{Lemma}
\newtheorem{trproj}[stable]{Proposition}
\newtheorem{pinftythm}[stable]{Theorem}
\newtheorem{O++schurcat}[stable]{Corollary}
\newtheorem{stablemult}[stable]{Proposition}
\newtheorem{finranktpc}[stable]{Theorem}
\newtheorem{Ejleft}[stable]{Definition}
\newtheorem{phi11}[stable]{Lemma}
\newtheorem{Ejright}[stable]{Definition}
\newtheorem{Fjadj}[stable]{Proposition}
\newtheorem{infranktpc}[stable]{Theorem}
\newtheorem{HeckeDef}[stable]{Definition}
\newtheorem{CategoricalAction}[stable]{Definition}
\newtheorem{TPCdef}[stable]{Definition}
\newtheorem{combinatorialsuperduality}[stable]{Proposition}
\newtheorem{TPCsuperduality}[stable]{Proposition}
\newtheorem{uniquenessofTPCs}[stable]{Theorem}
\newtheorem{QHA}[stable]{Remark}
\newtheorem{Lem4.2rmk}[stable]{Remark}
\newtheorem{gradedlifts}[stable]{Theorem}
\newtheorem{gradedsuperduality}[stable]{Corollary}
\newtheorem{superduality}[stable]{Corollary}
\newtheorem{catOfinitewedges}[stable]{Remark}
\newtheorem{projcover}[stable]{Remark}
\newtheorem{def:trunc}[stable]{Definition}
\newtheorem{Rmk:Chenproj}[stable]{Remark}
\newtheorem{EjLinO}[stable]{Lemma}
\newtheorem{limitinO}[stable]{Lemma}
\newtheorem{rmk:deltamultstable}[stable]{Remark}
\newtheorem{rmk:rMassump}[stable]{Remark}
\newtheorem{equivcatOs}[stable]{Remark}
\newtheorem{not:Mr}[stable]{Notation}
\newtheorem{rmk:prinjectives}[stable]{Remark}
\newtheorem{rmk:wedgenotationcomp}[stable]{Remark}
\newtheorem{wts}[stable]{Remark}
\newtheorem*{acknowledgements}{Acknowledgements.}
\begin{document}

\blfootnote{2010 \textit{Mathematics Subject Classification}: 17B10.

\noindent \textit{Key words and phrases}: Lie superalgebras, categorification}

\begin{center}
	\LARGE Graded super duality for general linear Lie superalgebras \\
	\vspace{13pt}
	\Large Christopher Leonard
	\vspace{13pt}
	\end{center}

\begin{abstract}
	\noindent \textsc{Abstract}: We provide a new proof of the super duality equivalence between infinite-rank parabolic BGG categories of general linear Lie (super) algebras conjectured by Cheng and Wang and first proved by Cheng and Lam. We do this by establishing a new uniqueness theorem for tensor product categorifications motivated by work of Brundan, Losev, and Webster. Moreover we show that these BGG categories have Koszul graded lifts and super duality can be lifted to a graded equivalence.
	\end{abstract}

%\tableofcontents

\section{Introduction}\label{sec:intro}

In \cite[Conjecture~4.32]{Bru02a}, Brundan formulated a Kazhdan-Lusztig style (BKL) conjecture on the characters of irreducible modules for the complex general linear Lie superalgebra $\mf{gl}_{m|n}$. The idea was that characters in the integral category $\catO$ for $\mf{gl}_{m|n}$ are controlled by certain canonical bases in the $\mf{sl}_{\ZZ}$-module $V^{\otimes m}\otimes W^{\otimes n}$, where $V=\mathbb{C}^{\ZZ}$ is the natural $\mf{sl}_{\ZZ}$-module and $W=V^*$ is its restricted dual. This conjecture admits natural parabolic variants (see e.g. \cite[Conjecture 3.10]{CW07}). The corresponding $\mf{sl}_{\ZZ}$-modules are of the form
\begin{equation}\label{eq:wedgemodule}
	\bigwedge\nolimits^{\underline{m}}V\otimes\bigwedge\nolimits^{\underline{n}}W:=\bigwedge\nolimits^{m_1} V\otimes\cdots\otimes \bigwedge\nolimits^{m_s} V\otimes\bigwedge\nolimits^{n_1} W\otimes\cdots\otimes \bigwedge\nolimits^{n_t} W
	\end{equation}
	
\noindent where $\underline{m}=(m_1,\ldots, m_s)$, $\underline{n}=(n_1,\ldots, n_t)$, $m=\sum m_i$, and $n=\sum n_j$ (in fact the wedges of $V$s and $W$s can be in any order, but we restrict our attention to this case for the introduction).

There is a natural $\mf{sl}_{\ZZ}$-module isomorphism $\bigwedge^{\infty}V\cong\bigwedge^{\infty}W$ between semi-infinite wedge spaces. In \cite{CW07} the authors considered the induced isomorphism
\begin{equation}\label{eq:combSD}
	\bigwedge\nolimits^{\underline{m}}V\otimes\bigwedge\nolimits^{\underline{n}}W\otimes\bigwedge\nolimits^{\infty}V\cong \bigwedge\nolimits^{\underline{m}}V\otimes\bigwedge\nolimits^{\underline{n}}W\otimes\bigwedge\nolimits^{\infty}W.
	\end{equation}
	
\noindent Motivated by this, they defined infinite-rank parabolic categories $\catO$, denoted $\catO^{++}_0$ and $\catO^{++}_1$ respectively, for the corresponding infinite-rank general linear Lie superalgebras and conjectured an equivalence $\catO^{++}_0\cong\catO^{++}_1$. This is super duality (the special case $s=1$, $t=0$ was first conjectured in \cite{CWZ06}).

Super duality was proved in \cite[Theorem~5.1]{CL09}. It can be regarded as a bridge between categories $\catO$ for general linear Lie algebras and superalgebras. Its utility stems from `truncation functors' (see Definition \ref{def:trunc} or \cite{CW07}) from the categories $\catO^{++}_{\parity}$ (${\parity\in\{0,1\}}$) to finite-rank categories $\catO$. These allow us to read off information about the finite-rank categories from their infinite-rank counterparts. For example, the BKL conjecture for the special case $t=1$ is an immediate corollary of super duality, as observed in \cite[Remark~4.19]{CW07}. Super duality was also a key component in the first general proof of the BKL conjecture in \cite[Theorem~8.11]{CLW12}.

Starting with an arbitrary Kac-Moody algebra $\mf{g}$ and a tensor product $M$ of integrable highest weight modules for $\mf{g}$, Losev and Webster \cite{LW13} defined what it means for a categorical action of $\mf{g}$ on a category $\mc{C}$ (in the sense of Chuang and Rouquier \cite{CR04}, \cite{Rou08}) to be a \emph{tensor product categorification} (TPC) of $M$. Building on uniqueness for categorifications of integrable highest weight modules in \cite{Rou12} they showed that such TPCs are unique up to equivalence.

For $r\in \NN$, let $I_r=\{i\in\ZZ\mid 1-r\leq i\leq r-1\}$ and let $\mf{sl}_{I_r}$ be the special linear Lie algebra of traceless complex matrices with rows and columns indexed by $I_r^+:=I_r\cup(I_r+1)$. Let $V_r$ be the natural $\mf{sl}_{I_r}$-module and $W_r=V_r^*$ its dual. 

In a highly non-trivial extension of the uniqueness result of \cite{LW13}, Brundan, Losev, and Webster proved in \cite[Theorem~2.12]{BLW13} proved that TPCs of $\mf{sl}_{\ZZ}$-modules of the form \eqref{eq:wedgemodule} are unique up to equivalence. Roughly, they did this by regarding the $\mf{sl}_{\ZZ}$-module as a union of $\mf{sl}_{I_r}$-modules:
\begin{equation}\label{eq:moduleunion}
	\bigwedge\nolimits^{\underline{m}}V\otimes\bigwedge\nolimits^{\underline{n}}W=\bigcup_{r=1}^{\infty}\bigwedge\nolimits^{\underline{m}}V_r\otimes\bigwedge\nolimits^{\underline{n}}W_r.
	\end{equation}

\noindent Moreover, they showed in \cite[Theorem~3.10]{BLW13} that translation functors on a finite-rank integral parabolic category $\catO$ for $\mf{gl}_{m|n}$ make the category a TPC of the corresponding $\mf{sl}_{\ZZ}$-module (\ref{eq:wedgemodule}) and that any such TPC possesses a unique Koszul graded lift compatible with its categorification structures. 

This paper examines TPCs of modules of the form \eqref{eq:combSD} and applications to the infinite-rank categories $\catO^{++}_{\parity}$. In Theorem~\ref{thm:uniquenessofTPCs} we show these TPCs are unique up to equivalence. We deal with semi-infinite wedges by regarding them as a direct limit of finite wedges. So the decomposition \eqref{eq:moduleunion} is replaced by the direct limit
\begin{equation}
	\bigwedge\nolimits^{\underline{m}}V\otimes\bigwedge\nolimits^{\underline{n}}W\otimes\bigwedge\nolimits^{\infty}V=\lim_{\longrightarrow}~ \bigwedge\nolimits^{\underline{m}}V_r\otimes\bigwedge\nolimits^{\underline{n}}W_r\otimes\bigwedge\nolimits^rV_r,
	\end{equation}

\noindent or the analogous limit for $\bigwedge^{\infty} W$.

Take $\parity\in\{0,1\}$. In Theorem~\ref{infranktpc} we prove that the infinite-rank category $\catO^{++}_{\parity}$ is a TPC of the corresponding $\mf{sl}_{\ZZ}$-module. Super duality follows immediately. Our main tools are the truncation functors already mentioned. These allow us to consider a module in $\catO^{++}_{\parity}$ as a direct limit of modules for finite-rank general linear Lie superalgebras as the rank goes to infinity. First we need to establish that $\catO^{++}_{\parity}$ has enough projectives. This is a new result of this paper. We construct projective covers in $\catO^{++}_{\parity}$ as the direct limit of projective covers in certain subcategories of finite-rank categories $\catO$. The key property is that the Verma flags of these projective covers stabilize when the rank is sufficiently large.

An unexpected difficulty occurs in defining the functors $E_j$ on $\catO^{++}_{\parity}$ lifting the action of the Chevalley generators $e_j$.  The corresponding translation functors on finite-rank categories $\catO$ are not well behaved with respect to truncation, so we can't define connecting maps for the direct limit in the obvious way. We actually define two sets of connecting maps, leading to isomorphic functors $E_j^{\texttt{L}}$ and $E_j^{\texttt{R}}$ which are naturally left and right adjoint to $F_j$ respectively. Again, these only give well-defined functors on $\catO^{++}_{\parity}$ because the composition multiplicities eventually stabilize.

Finally we replace $\mf{sl}_{\ZZ}$-modules with modules over the quantum group $\mc{U}_q\mf{sl}_{\ZZ}$ and categories with graded categories. Following \cite[Section~5]{BLW13} we show that our $\mf{sl}_{\ZZ}$-TPCs have unique graded lifts that are $\mc{U}_q\mf{sl}_{\ZZ}$-TPCs, and these graded categories are Koszul. In particular, the categories $\catO^{++}_{\parity}$ have Koszul graded lifts and super duality lifts to a graded equivalence between them. This is a new result of this paper.

It is our hope that this work will be useful in the study of the representation theory of quantum supergroups at roots of unity.

This paper is organized as follows. In Section \ref{section:combinatorics} we describe the combinatorics of the wedge spaces. In Section \ref{section:uniquenessofTPCs} we define our TPCs and show that they are unique up to equivalence. In Section \ref{section:catO} we define the infinite-rank categories $\catO^{++}_{\parity}$, show they have enough projectives, and define the categorical actions on them. We prove they are TPCs and deduce the super duality equivalence. Finally in Section \ref{section:gradings} we establish the existence of graded lifts.
	
\begin{conv}
	By a \emph{Schurian category} we mean an abelian category $\mc{C}$ such that all objects are of finite length, there are enough projectives and injectives, and the endomorphism algebras of the irreducible objects are one dimensional. Let $K_0\left(\mathcal{C}\right)$ denote the split Grothendieck group of the full subcategory of projective objects in $\mc{C}$. Set $[ \mathcal{C}] := \mathbb{C}\otimes_{\mathbb{Z}} K_0(\mathcal{C})$.
		
	If $A$ is an associative algebra we write mod-$A$ for the category of finite-dimensional right $A$-modules. If we have a collection of objects $X_r$ indexed by $\mathbb{N}\cup \{\infty\}$ we will drop the subscript in the $r=\infty$ case and write $X=X_\infty$. If $\mf{a}$ is a Lie superalgebra we will say `$\mf{a}$-module' to mean $\mf{a}$-supermodule.
	\end{conv}
	
\begin{acknowledgements}
	The author would like to thank his advisor Weiqiang Wang for suggesting this project and for his generosity with his time and advice.
	\end{acknowledgements}

\section{Combinatorics}\label{section:combinatorics}

Take  $r\in \mathbb{N}\cup \{\infty\}$. Let 
\begin{align}
	I_r:=\{ \, i\in\ZZ \, \mid \, 1-r\leq i\leq r-1 \, \}
	\end{align}
	
\noindent and $I_r^+:=I_r\cup(I_r+1)$. Let $\mathfrak{sl}_{I_r}$ denote the Lie algebra of complex traceless matrices with rows and columns indexed by $I_r^+$ and with only finitely many non-zero entries in each row and column. Let $f_i:=e_{i,i+1}$ (resp. $e_i:=e_{i+1,i}$) denote the matrix unit with 1 in position $(i,\, i+1)$ (resp. $(i+1,\, i)$) and 0s elsewhere. Define the weight and root lattices of $\mf{sl}_{I_r}$ by
\begin{equation}
	P_r:=\bigoplus_{i\in I_r} \mathbb{Z} \omega_i \qquad \text{and} \qquad Q_r:=\bigoplus_{i\in I_r} \mathbb{Z} \alpha_i
	\end{equation}
	
\noindent respectively, where $\alpha_i:=2\omega_i-\omega_{i-1}-\omega_{i+1}$ and we interpret $\omega_i$ as 0 if $i\notin I_r$. We will denote the non-degenerate pairing $P_r \times Q_r \to \mathbb{Z}$ by $\left(\omega,\alpha\right)\mapsto \omega\cdot\alpha$.

Let  $P_r^+\subseteq P_r$ (resp. $Q_r^+\subseteq Q_r$) denote the $\mathbb{Z}_{\geq 0}$-span of the $\omega_i$ (resp. $\alpha_i$). Define the dominance ordering on $P_r$ by declaring that $\beta\geq \gamma$ if $\beta-\gamma\in Q_r^+$. For $i\in I_r^+$ let $\varepsilon_i:=\omega_i-\omega_{i-1}$ where again we interpret $\omega_i$ as 0 if $i\notin I_r$. 

Let $V_r$ be the natural $\mathfrak{sl}_{I_r}$-module of column vectors with standard basis $\{v_i\mid i\in I_r^+\}$. Let $W_r$ be the dual of $V_r$ with dual basis $\{w_i\mid i\in I_r^+\}$. We will denote the exterior powers $\bigwedge^n V_r$ and $\bigwedge^n W_r$ by $\bigwedge^{n,0} V_r$ and $\bigwedge^{n,1} V_r$, respectively. For $n\in \mathbb{N}$ and $\parity\in\{0,1\}$ let $\Xi_{r;n,\parity}$ be the set of 01-tuples $\lambda=(\lambda_i)_{i\in I_r^+}$ such that
\begin{align}
	\left| \left\{ \, i \in I_r^+ \, \middle| \,  \lambda_i\neq \parity \, \right\} \right| = n. 
	\end{align}

\noindent This set indexes the monomial basis of $\bigwedge^{n,\parity} V_r$: if $\lambda\in \Xi_{r;n,\parity}$ and $i_1>\cdots >i_n$ with  ${\lambda_{i_1},\ldots,\lambda_{i_n}\neq \parity}$ then set
\begin{align} \label{eq:vlam}
	v_\lambda:=\begin{cases}
				v_{i_1}\wedge \cdots \wedge v_{i_n}		& \text{if } \parity=0 \\
				w_{i_n}\wedge \cdots \wedge w_{i_1}	& \text{if } \parity=1.
				\end{cases}
	\end{align}

\noindent The weight of $v_\lambda$ is
\begin{align}
	\left| \lambda\right| := \sum_{\substack{i\in I_r^+ \\ \lambda_i\neq \parity}} (-1)^{\parity} \varepsilon_i\in P_r.
	\end{align}
	
\noindent We wish to consider spaces of semi-infinite wedges. We will construct these as direct limits of the modules above. For $r<s<\infty$ there is an $\mathfrak{sl}_{I_r}$-module embedding ${\bigwedge\nolimits^{r,\parity} V_r\into \bigwedge\nolimits^{s,\parity} V_s}$ given by linearly extending the assignment
\begin{align} \label{wedgemap}
	v_\lambda\longmapsto\begin{cases}
							v_\lambda\wedge v_{-r}\wedge v_{-r-1}\wedge\cdots \wedge v_{1-s} 		& \text{ if } \parity=0 \\
							v_\lambda\wedge w_{r+1}\wedge w_{r+2}\wedge \cdots \wedge w_s		& \text{ if } \parity=1.
							\end{cases}
	\end{align}

\noindent The corresponding embedding $\Xi_{r;r,\parity}\into \Xi_{s;s,\parity}$ is given by extending a 01-tuple ${\lambda=\left(\lambda_j\right)_{j\in I_r^+}}$ by setting
\begin{align}\label{indexsetwedgemap}
	\lambda_j = 
	\begin{cases}
		0	& \text{ if } r<j\leq s \\
		1	& \text{ if } 1-s\leq j<1-r.
		\end{cases}
	\end{align}
	
\noindent Let $\Xi_{\infty,\parity}:=\displaystyle{\lim_{\longrightarrow}} \, \Xi_{r;r,\parity}$ and let $\bigwedge\nolimits^{\infty,\parity} V:= \displaystyle{\longrightarrow}\textstyle \, \bigwedge\nolimits^{r,\parity} V_r$, an $\mathfrak{sl}_\mathbb{Z}$-module. Then $\Xi_{\infty,\parity}$ parameterizes the natural monomial basis of $\bigwedge\nolimits^{\infty,\parity} V$, the union of the monomial bases of the $\bigwedge\nolimits^{r,\parity} V_r$. It is natural to think of an element $\lambda\in\Xi_{\infty,\parity}$ as a 01-tuple $\lambda=\left(\lambda_i\right)_{i\in \mathbb{Z}}$ and of the corresponding element $v_\lambda\in\bigwedge\nolimits^{\infty,\parity} V$ as a semi-infinite wedge.

\vspace{10pt}
\begin{center}
	\framebox(350,40){\emph{Once and for all fix $l\in\mathbb{N}$, $n_1,\ldots, n_l\in \mathbb{N}$, and $c_1,\ldots, c_l\in \left\{ \, 0,1\right\}$.}}
	\end{center}
\vspace{10pt}

We will generally suppress dependence on this data in our notation.  For $r\in\mathbb{N}\cup\{\infty\}$ and $\parity\in\left\{0,1\right\}$ write $\ur=\left(n_1,\ldots, n_l, r\right)$ and $\ud=\left(c_1,\ldots, c_l,\parity\right)$. Our main objects of study will be modules of the form
\begin{align}
	\bigwedge\nolimits^{\ur,\ud} V_r:= \bigwedge\nolimits^{n_1,c_1} V_r \otimes\cdots\otimes \bigwedge\nolimits^{n_l,c_l} V_r\otimes \bigwedge\nolimits^{r,\parity} V_r.
	\end{align}

\noindent This has a monomial basis indexed by the set
\begin{align}
	\Xi_{\ur,\ud}:=\Xi_{r;n_1,c_1}\times \cdots\times\Xi_{r;n_l,c_l}\times\Xi_{r;r,\parity},
	\end{align}

\noindent where for $\lambda=\left(\lambda^i\right)_{1\leq i\leq l+1}\in\Xi_{\ur,\ud}$ we set
\begin{align}
	v_{\lambda}:= v_{\lambda^1}\otimes \cdots \otimes v_{\lambda^{l+1}}.
	\end{align}
	
\noindent It will sometimes be convenient to think of $\lambda$ as a 01-matrix $\lambda=(\lambda^i_j)_{1\leq i\leq l+1,j\in I_r^+}$ whose $i^\text{th}$ row is $\lambda^i$.

For $r<\infty$ and $\lambda\in\Xi_{\ur,\ud}$, $v_\lambda$ has weight
\begin{align}
	| \lambda | = | \lambda^1 | + \cdots + | \lambda^l |\in P_r
	\end{align}
	
\noindent Define a poset structure on $\Xi_{\ur,\ud}$ by declaring that if $\lambda,\mu\in \Xi_{\ur,\ud}$ then $\lambda\leq \mu$ iff $| \lambda | = | \mu |$ and $| \lambda^1 | + \cdots + | \lambda^i | \geq | \mu^1 | + \cdots + | \mu^i |$ for each $1\leq i\leq l+1$.

For $r<s<\infty$ there are maps $\bigwedge\nolimits^{\ur,\ud}V_r\into\bigwedge\nolimits^{\us,\ud}V_s$ induced by inclusion on the first $l$ factors and (\ref{wedgemap}) on the last. There is a corresponding map $\Xi_{\ur,\ud}\into\Xi_{\us,\ud}$ given by setting `new' $\lambda^i_{j}$ equal to $c_i$ if $1\leq i\leq l$ and by (\ref{indexsetwedgemap}) on the last factor. So we have
\begin{align}\label{directlimit}
	\bigwedge\nolimits^{\uinfty,\ud}V= \lim_{\longrightarrow} \, \bigwedge\nolimits^{\ur,\ud} V_r, \qquad \qquad \Xi_{\uinfty,\ud}=\lim_{\longrightarrow} \, \Xi_{\ur,\ud}.
	\end{align}

\noindent We will freely identify elements of $\Xi_{\ur,\ud}$ with their image in $\Xi_{\uinfty,\ud}$ and write ${\Xi_{\uinfty,\ud}=\bigcup_{r=1}^\infty \Xi_{\ur,\ud}}$. Similarly we will consider the $\bigwedge\nolimits^{\ur,\ud} V_r$ as $\mathfrak{sl}_{I_r}$-submodules of $\bigwedge\nolimits^{\uinfty,\ud} V$.

The maps (\ref{directlimit}) are order preserving so there is an induced partial order on $\Xi_{\uinfty,\ud}$.

\begin{wts}\label{wts}
	The maps (\ref{directlimit}) are order preserving but they are \emph{not} weight preserving. If $\lambda\in \Xi_{\ur,\ud}\subseteq \Xi_{\uinfty,\ud}$ we will write $\lvert \lambda \rvert_s~(s\geq r)$ to denote the weight of $v_\lambda$ when considering $\lambda$ as an element of $\Xi_{\us,\ud}$.
	\end{wts}

\begin{rmk:wedgenotationcomp}
	Most of our notations match those of \cite[\S2.2]{BLW13}. The only notable differences are that our $V_r$ and $W_r$ correspond to $V_{I_r}$ and $W_{I_r}$ in \emph{loc. cit.} and, when $r<\infty$, our $\Xi_{r;n,\parity}$ and $\Xi_{\ur,\ud}$ correspond to their $\Lambda_{I_r;n,\parity}$ and $\Lambda_{I_r;\ur,\ud}$ respectively. They do not consider semi-infinite wedges so don't have notation for these indexing sets when $r=\infty$.
	\end{rmk:wedgenotationcomp}

\noindent The following straightforward proposition is the basis for super-duality.
	
\begin{combinatorialsuperduality}\label{combinatorialsuperduality}
	We have equality of posets $\Xi_{\uinfty,\underline{0}}=\Xi_{\uinfty,\underline{1}}$ and this induces an $\mathfrak{sl}_{\mathbb{Z}}$-module isomorphism 
	\begin{align}
		\begin{split}
			\bigwedge\nolimits^{\uinfty, \underline{0}} 	V	& \longrightarrow\bigwedge\nolimits^{\uinfty, \underline{1}} V \\
			v_{\lambda}							& \longmapsto v_{\lambda}
			\end{split}
		\end{align}
	\end{combinatorialsuperduality}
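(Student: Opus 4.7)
The plan is to verify set-theoretic equality $\Xi_{\uinfty,\underline{0}}=\Xi_{\uinfty,\underline{1}}$, upgrade it to equality of posets, and then check that the induced bijection of monomial bases is $\mathfrak{sl}_{\ZZ}$-equivariant.

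First I would observe that at each finite rank $r$ the set $\Xi_{r;r,0}$ consists of 01-tuples on $I_r^+$ with exactly $r$ ones (hence $r$ zeros, since $|I_r^+|=2r$), while $\Xi_{r;r,1}$ consists of those with exactly $r$ zeros (hence $r$ ones), so the two sets literally coincide. The first $l$ factors of $\Xi_{\ur,\ud}$ are independent of $\parity$ outright, and the transition map \eqref{indexsetwedgemap} (extending by $0$'s at new positive indices and $1$'s at new negative indices) does not depend on $\parity$ either. Taking direct limits via \eqref{directlimit} gives the equality of sets $\Xi_{\uinfty,\underline{0}}=\Xi_{\uinfty,\underline{1}}$.

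Next I would compare the poset structures. The weights $|\lambda^i|$ for $i\leq l$ are $\parity$-independent by definition. For the last factor one has $|\lambda^{l+1}|=\sum_{\lambda_j=1}\varepsilon_j$ when $\parity=0$ and $|\lambda^{l+1}|=-\sum_{\lambda_j=0}\varepsilon_j$ when $\parity=1$. These agree because $\sum_{j\in I_r^+}\varepsilon_j=\omega_r-\omega_{-r}=0$, by telescoping together with the convention $\omega_i=0$ for $i\notin I_r$. Therefore the partial sums $|\lambda^1|+\cdots+|\lambda^i|$ match between the two cases for every $i\leq l+1$, so the order conditions coincide at each finite rank and pass to the direct limit.

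Finally, for $\mathfrak{sl}_{\ZZ}$-equivariance of $v_\lambda\mapsto v_\lambda$, the map is the identity on the first $l$ tensor factors and a linear bijection on the last, so it suffices to verify equivariance of the component $\bigwedge\nolimits^{\infty,0}V\to\bigwedge\nolimits^{\infty,1}V$. This is the classical $\mathfrak{sl}_{\ZZ}$-module isomorphism of semi-infinite wedges underlying super duality (cf.\ \cite{CW07}), which sends $v_\lambda\mapsto v_\lambda$ on monomial bases by construction. Alternatively, one can check directly that $e_j$ and $f_j$ act on both sides by the same $01\leftrightarrow 10$ swap at adjacent positions $(j,j+1)$ with matching signs, once the Koszul signs from the opposite wedge orderings in \eqref{eq:vlam} are reconciled with those built into the dual action on $W$; this sign reconciliation is the only real technical obstacle in a fully self-contained argument, and is neatly sidestepped by invoking the classical semi-infinite wedge isomorphism.
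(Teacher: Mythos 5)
Your argument is correct, and in fact the paper offers no proof at all here — the proposition is simply declared ``straightforward'' — so your write-up is strictly more detailed than the source. The set-theoretic equality (exactly $r$ ones iff exactly $r$ zeros on the $2r$-element set $I_r^+$, with $\parity$-independent transition maps) and the poset comparison via the telescoping identity $\sum_{j\in I_r^+}\varepsilon_j=\omega_r-\omega_{-r}=0$ are exactly the right checks. The one place you defer is the only place that genuinely needs a computation: knowing abstractly that $\bigwedge^{\infty}V\cong\bigwedge^{\infty}W$ does not by itself show that the \emph{specific} map $v_\lambda\mapsto v_\lambda$ is an intertwiner with coefficient exactly $+1$, which is what the proposition asserts. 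Fortunately the check is immediate with the conventions the paper inherits from \cite[\S2.2]{BLW13}, where the action on $W$ is given by the sign-free formulas $f_iw_i=w_{i+1}$, $e_iw_{i+1}=w_i$ (rather than the standard dual action $f_iw_i=-w_{i+1}$, which would introduce the sign discrepancy you worry about): each Chevalley generator replaces a single wedge factor \emph{in place} in both \eqref{eq:vlam} orderings, so no Koszul reordering signs ever arise and both sides act by the same $01\leftrightarrow10$ swap with coefficient $+1$. So your proof is sound; I would just replace the appeal to the ``classical'' isomorphism with this one-line direct verification.
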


\section{Tensor product categorifications}\label{section:uniquenessofTPCs}

\subsection{Recollection of definitions}

We recall some important definitions. See \cite[\S2.3-2.6]{BLW13} for more details.

\begin{HeckeDef}
	The (degenerate) \emph{affine Hecke algebra} $AH_k$ is the $\mathbb{C}$-algebra with generators $x_1,\ldots, x_k, t_1,\ldots t_{k-1}$ such that $x_1,\ldots, x_k$ generate a polynomial algebra $\mathbb{C}\left[ x_1,\ldots,x_k\right]$, $t_1,\ldots ,t_{k-1}$ generate a copy of the symmetric group $S_k$ with $t_j$ corresponding to the transposition $(j~j+1)$, and
	\begin{align}
		t_ix_j-x_{t_i(j)}t_i=
			\begin{cases}
				1	& \text{if } j=i+1\\
				-1	& \text{if }j=i \\
				0	& \text{otherwise}
				\end{cases}
		\end{align}
	
	\noindent For $\omega\in P_r^+$, the \emph{cyclotomic affine Hecke algebra} $AH_k^{\omega}$ is the quotient of $AH_k$ by the two-sided ideal generated by $\prod_{i\in I_r}(x_1-i)^{\omega\cdot\alpha_i}$.
	\end{HeckeDef}

The image of $\mathbb{C}\left[ x_1,\ldots, x_k\right]$ in $AH_k^{\varpi}$ is a finite-dimensional commutative algebra, so it contains mutually orthogonal idempotents $\left\{ \, 1_{\boldsymbol{i}} \, \middle| \, \boldsymbol{i}\in\mathbb{C}^k \, \right\}$ such that $1_{\boldsymbol{i}}$ acts on any finite-dimensional $AH_k^{\varpi}$-module M as projection onto
\begin{align}
	M_{\boldsymbol{i}} = \left\{ \, v\in M \, \middle| \, (x_j-i_j)^Nv=0 \text{ for } 1\leq j\leq k \text{ and } N\gg 0 \, \right\}.
	\end{align}

\noindent Define 
\begin{align}
	AH_{r,k}^{\varpi}:=\bigoplus_{\boldsymbol{i},\boldsymbol{j}\in I_r^k} 1_{\boldsymbol{i}}AH_k^{\varpi} 1_{\boldsymbol{j}}=AH_k^{\varpi}\left.\middle/\right.\langle \, 1_{\boldsymbol{i}} \, | \, \boldsymbol{i}\notin I_r^k \, \rangle.
	\end{align}

\begin{QHA}
	For consistency we will phrase all categorical actions in this paper in terms of (cyclotomic) affine Hecke algebras, but they could equally be phrased using appropriate (cyclotomic) quiver Hecke algebras (also known as KLR algebras \cite{KL09}, \cite{Rou08}) as described in \cite[Sections 2.3 and 2.4]{BLW13}.
	\end{QHA}

\begin{CategoricalAction}
	Let $\mathcal{C}$ be a Schurian category with an endofunctor $F\in\End\left(\mathcal{C}\right)$, a right adjoint $E$ to $F$ (with a fixed adjunction), and natural transformations $x\in\End\left(F\right)$ and $t\in\End\left(F^2\right)$. Via the adjunction there are induced natural transformations $x\in\End\left(E\right)$ and $t\in\End\left(E^2\right)$. For $i\in I_r$ let $F_i$ (resp.\;$E_i$) be the endofunctor of $\mathcal{C}$ defined by setting $F_iM$ (resp.\;$E_iM$) equal to the generalized $i$-eigenspace of $x$ on $FM$ (resp.\;$EM$). We say this data defines a \emph{categorical $\mathfrak{sl}_{I_r}$-action} on $\mathcal{C}$ if:
	\begin{enumerate}[leftmargin=*, align=left, label=(CA\arabic{*}), ref=(CA\arabic{*})]
		\item \label{CA1} $F$ decomposes as a direct sum $F=\bigoplus_{i\in I_r}F_i$;
		\item \label{CA2} The endomorphisms $x_j:=F^{k-j}xF^{j-1}$ and $t_j:=F^{k-j-1}tF^{j-1}$ of $F^k$ satisfy the relations of the affine Hecke algebra $AH_k$ for all $k\geq 0$;
		\item \label{CA3} $F$ is isomorphic to a right adjoint of $E$;
		\item \label{CA4} The endomorphisms $f_i$ and $e_i$ of $\left[ \mathcal{C}\right]$ induced by $E_i$ and $F_i$ make it into an integrable $\mathfrak{sl}_{I_r}$-module such that the classes of indecomposable projectives are weight vectors.
		\end{enumerate}
	\end{CategoricalAction}

As a consequence of these axioms, $E=\bigoplus_{i\in I_r} E_i$ and $E_i$ and $F_i$ are biadjoint for all $i\in I_r$. The data of a categorical $\mf{sl}_{I_r}$-action on $\mc{C}$ implies an action of the associated Kac-Moody 2-category as defined by \cite{Rou08}, \cite{KL10}, and \cite{CL16} (these definitions were shown to be equivalent in \cite{Bru16}).

Recall the definition of a (Schurian) highest weight category $\mathcal{C}$ in the sense of \cite{CPS88}. As in \cite[Definition 2.8]{BLW13} we allow the associated poset $(\Xi, \leq)$ to be infinite as long as it is interval-finite: for $\lambda,\mu\in\Xi$ the set $\left\{ \, \nu\in\Xi \, \middle| \,  \lambda\leq\nu\leq\mu \, \right\}$ is finite. We write $L$, $\Delta$, and $P$ for the irreducibles, standards, and projectives in $\mathcal{C}$ respectively. Let $\mathcal{C}^{\Delta}$ be the full subcategory of $\mathcal{C}$ consisting of objects with a $\Delta$-flag and let $\left[\mc{C}^{\Delta}\right]$ denote its complexified Grothendieck group.

\begin{TPCdef}
	Take $r\in\NN\cup\{\infty\}$ and $\parity\in\{0,1\}$. An \emph{$\mathfrak{sl}_{I_r}$-tensor product categorification} (TPC) of type $(\ur,\ud)$ is a Schurian highest weight category with poset $\Xi_{\ur,\ud}$ and a categorical $\mathfrak{sl}_{I_r}$-action such that
	\begin{enumerate}[leftmargin=*, align=left, label=(TPC\arabic{*}), ref=(TPC\arabic{*})]
		\item \label{TPC1} $F_i$ and $E_i$ restrict to endofunctors of $\mathcal{C}^{\Delta}$;
		\item \label{TPC2} There is a linear isomorphism 
			\begin{align}
				\begin{split}
					\left[ \mathcal{C}^{\Delta}\right]		& \longrightarrow \bigwedge\nolimits^{\ur,\ud} V_r, \\
					\left[ \Delta(\lambda)\right]			& \longmapsto v_\lambda 
					\end{split}
				\end{align}
		
			intertwining the endomorphisms induced by $E_i$ and $F_i$ and the Chevalley generators $e_i,f_i\in\mathfrak{sl}_{I_r}$. 
		\end{enumerate}
	\end{TPCdef}

\noindent As $\left[\mc{C}\right]\into\left[\mc{C}^{\Delta}\right]$, the axiom \ref{CA4} is actually a consequence of \ref{TPC2}.

The following proposition immediately follows from the definitions and Proposition \ref{combinatorialsuperduality}.

\begin{TPCsuperduality}\label{prop:TPCsuperduality}
	Every $\mathfrak{sl}_{\mathbb{Z}}$-TPC of type $(\uinfty,\underline{0})$ is also a TPC of type $(\uinfty,\underline{1})$ and vice-versa.
	\end{TPCsuperduality}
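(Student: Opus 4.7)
The plan is to unpack the definition of a TPC of type $(\uinfty,\ud)$ and check that each piece of data is, up to the identification of Proposition \ref{combinatorialsuperduality}, independent of whether $\ud=\underline{0}$ or $\ud=\underline{1}$. There is no real content beyond this bookkeeping; the whole point is that Proposition \ref{combinatorialsuperduality} was set up precisely so the two notions of TPC coincide.

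First I would let $\mc{C}$ be an $\mf{sl}_{\ZZ}$-TPC of type $(\uinfty,\underline{0})$. By definition $\mc{C}$ is a Schurian highest weight category whose underlying poset is $\Xi_{\uinfty,\underline{0}}$, and $\mc{C}$ carries a categorical $\mf{sl}_{\ZZ}$-action satisfying \ref{TPC1} and \ref{TPC2}. The first observation is that the poset condition is automatic: by Proposition \ref{combinatorialsuperduality} we have an equality $\Xi_{\uinfty,\underline{0}}=\Xi_{\uinfty,\underline{1}}$, so $\mc{C}$ is also a Schurian highest weight category with poset $\Xi_{\uinfty,\underline{1}}$. Next, the categorical $\mf{sl}_\ZZ$-action, together with \ref{TPC1}, makes no reference whatsoever to the parameter $\ud$: $F$, $E$, $x$, $t$, the axioms \ref{CA1}--\ref{CA4}, and the requirement that $F_i,E_i$ preserve $\mc{C}^\Delta$ are all properties of $\mc{C}$ alone.

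The only axiom that remains to be checked is \ref{TPC2}. By assumption there is a linear isomorphism $\varphi_0\colon [\mc{C}^{\Delta}]\longisoto \bigwedge^{\uinfty,\underline{0}} V$ with $[\Delta(\lambda)]\mapsto v_\lambda$ that intertwines the endomorphisms induced by $E_i,F_i$ with the Chevalley generators $e_i,f_i$. Composing $\varphi_0$ with the $\mf{sl}_\ZZ$-module isomorphism
\[
\bigwedge\nolimits^{\uinfty,\underline{0}}V\longisoto \bigwedge\nolimits^{\uinfty,\underline{1}}V,\qquad v_\lambda\longmapsto v_\lambda,
\]
supplied by Proposition \ref{combinatorialsuperduality} yields a linear isomorphism $\varphi_1\colon [\mc{C}^\Delta]\longisoto \bigwedge^{\uinfty,\underline{1}}V$ sending $[\Delta(\lambda)]\mapsto v_\lambda$. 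Because the second map is $\mf{sl}_\ZZ$-linear, $\varphi_1$ still intertwines the induced endomorphisms of $E_i,F_i$ with $e_i,f_i$, so \ref{TPC2} holds for type $(\uinfty,\underline{1})$.

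Thus $\mc{C}$ is also an $\mf{sl}_{\ZZ}$-TPC of type $(\uinfty,\underline{1})$. The converse is identical, using the inverse isomorphism from Proposition \ref{combinatorialsuperduality}. I do not expect any real obstacle here; the argument is almost entirely formal, and all of the genuine combinatorial work was already absorbed into Proposition \ref{combinatorialsuperduality}. If any step deserves care, it is simply verifying that the map $v_\lambda\mapsto v_\lambda$ really is $\mf{sl}_\ZZ$-equivariant (which is the content of Proposition \ref{combinatorialsuperduality}), and then confirming that this equivariance is what is needed to transport the intertwining property of $\varphi_0$ to $\varphi_1$.
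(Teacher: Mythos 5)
Your argument is correct and is exactly the reasoning the paper has in mind: the paper offers no written proof beyond the remark that the proposition ``immediately follows from the definitions and Proposition~\ref{combinatorialsuperduality},'' and your unpacking (same poset, $\ud$-independent categorical action, and transport of the \ref{TPC2} isomorphism along the equivariant map $v_\lambda\mapsto v_\lambda$) is precisely that bookkeeping made explicit.
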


\subsection{Strategy for uniqueness} \label{section:uniqueintro}

The rest of this section will be dedicated to proving the following theorem (see \cite{LW13} or \cite{BLW13} for the definition of a strongly equivariant equivalence):

\begin{uniquenessofTPCs}\label{thm:uniquenessofTPCs}
	Suppose that $\mathcal{C}$ and $\mathcal{C'}$ are $\mathfrak{sl}_{I_r}$-TPCs of type $(\ur,\ud)$. Then there is a strongly equivariant equivalence $\mathbb{G}:\mathcal{C}\to\mathcal{C'}$ with $\mathbb{G}L(\lambda)=L'(\lambda)$ for all $\lambda\in\Xi_{\ur,\ud}$. 
	\end{uniquenessofTPCs}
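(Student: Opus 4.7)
The natural strategy is to split into two cases depending on whether $r$ is finite or infinite. For $r<\infty$, the $\mathfrak{sl}_{I_r}$-module $\bigwedge^{\ur,\ud}V_r$ is a tensor product of ordinary (finite) exterior powers of $V_r$ and $W_r$, which is exactly the setup of \cite[Theorem~2.12]{BLW13}. I would therefore invoke their uniqueness theorem directly for this case, so that all the subsequent work can be focused on the genuinely new situation $r=\infty$.

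For $r=\infty$, the plan is to exhaust $\mathcal{C}$ and $\mathcal{C}'$ by finite-rank sub-TPCs and glue the equivalences produced by the finite case. Using the filtration $\Xi_{\uinfty,\ud}=\bigcup_{s\geq 1}\Xi_{\us,\ud}$ from (\ref{directlimit}), the first step is to verify the combinatorial fact that each $\Xi_{\us,\ud}$ is a lower set in $\Xi_{\uinfty,\ud}$, so that the Serre subcategory $\mathcal{C}_{\leq s}\subseteq\mathcal{C}$ generated by $\{L(\lambda)\mid\lambda\in\Xi_{\us,\ud}\}$ is itself a Schurian highest weight category with poset $\Xi_{\us,\ud}$ and standards inherited from $\mathcal{C}$. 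Defining $\mathcal{C}'_{\leq s}$ likewise, we have $\mathcal{C}=\bigcup_s\mathcal{C}_{\leq s}$ and $\mathcal{C}'=\bigcup_s\mathcal{C}'_{\leq s}$.

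Next I would show that each $\mathcal{C}_{\leq s}$ is naturally an $\mathfrak{sl}_{I_s}$-TPC of type $(\us,\ud)$. The key point is that for $i\in I_s$ the functors $F_i,E_i$ preserve $\mathcal{C}_{\leq s}$: by \ref{TPC2} their effect on standards is governed by the action of $f_i,e_i$ on $\bigwedge^{\uinfty,\ud}V$, and since $f_i,e_i$ only modify $01$-entries at positions in $I_s^+$ they preserve $\Xi_{\us,\ud}$; combined with \ref{CA4} this forces $F_i,E_i$ to restrict to endofunctors of $\mathcal{C}_{\leq s}$. The Hecke relations \ref{CA2} and the adjunction/biadjointness of \ref{CA3} restrict automatically, and the Grothendieck group identification \ref{TPC2} for $(\us,\ud)$ follows by restriction from its $(\uinfty,\ud)$-counterpart. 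Applying the finite-rank case to each $s$ then produces strongly equivariant equivalences $\mathbb{G}_s\colon\mathcal{C}_{\leq s}\to\mathcal{C}'_{\leq s}$ with $\mathbb{G}_s L(\lambda)=L'(\lambda)$.

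Finally, uniqueness of $\mathbb{G}_s$ up to strongly equivariant natural isomorphism (itself part of \cite[Theorem~2.12]{BLW13}) supplies coherent compatibility data $\mathbb{G}_{s+1}\vert_{\mathcal{C}_{\leq s}}\cong\mathbb{G}_s$, and a standard gluing argument assembles these into a strongly equivariant equivalence $\mathbb{G}\colon\mathcal{C}\to\mathcal{C}'$ on the direct limit, with $\mathbb{G}L(\lambda)=L'(\lambda)$ for all $\lambda\in\Xi_{\uinfty,\ud}$. The hardest step is expected to be the third one: establishing that $\mathcal{C}_{\leq s}$ really does inherit enough structure to be a bona fide $\mathfrak{sl}_{I_s}$-TPC. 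In particular, its projective covers are \emph{not} simply restrictions of the projective covers in $\mathcal{C}$, so constructing them and controlling the restricted categorical action requires a genuine argument — one that mirrors the projective-cover construction the author flags as new for the $\catO^{++}_{\parity}$ setting in Section~\ref{section:catO}.
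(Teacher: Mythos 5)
Your reduction to finite rank fails at the step where you assert that $\Xi_{\us,\ud}$ is a lower set in $\Xi_{\uinfty,\ud}$, so that the Serre subcategory $\mathcal{C}_{\leq s}$ generated by $\left\{L(\lambda)\mid\lambda\in\Xi_{\us,\ud}\right\}$ inherits a highest weight structure with poset $\Xi_{\us,\ud}$ and standards $\Delta(\lambda)$. This is false in general. For instance take $l=1$, $n_1=1$, $c_1=0$, $\parity=0$, and $\mu=(\mu^1,\mu^2)\in\Xi_{\us,\ud}$ with the unique $1$ of $\mu^1$ at a position $a\in I_s^+$ where $\mu^2_a=0$; let $\nu^1$ have its $1$ at position $-s\notin I_s^+$ and let $\nu^2$ be $\mu^2$ (viewed in $\Xi_{s+1;s+1,0}$, so $\mu^2_{-s}=1$) with the entries at positions $-s$ and $a$ swapped. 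Then $|\nu|_{s+1}=|\mu|_{s+1}$ and $|\nu^1|-|\mu^1|=\varepsilon_{-s}-\varepsilon_a=\alpha_{-s}+\cdots+\alpha_{a-1}\in Q_{s+1}^+$, so $\nu<\mu$, yet $\nu\notin\Xi_{\us,\ud}$. Hence nothing prevents $\Delta(\mu)$ from having composition factors $L(\nu)$ with $\nu\notin\Xi_{\us,\ud}$, your $\mathcal{C}_{\leq s}$ need not contain the standard objects it is supposed to have, and it is not an $\mathfrak{sl}_{I_s}$-TPC of type $(\us,\ud)$. This is precisely why the paper does not use the naive subcategories: in \S\ref{subsection:truncation} it introduces the two genuine ideals $\Xi^{\leq}_{\us,\ud}\supseteq\Xi^{<}_{\us,\ud}$ with $\Xi_{\us,\ud}=\Xi^{\leq}_{\us,\ud}\setminus\Xi^{<}_{\us,\ud}$ and works with the Serre \emph{subquotient} $\mathcal{C}_s=\mathcal{C}_{\leq s}/\mathcal{C}_{<s}$, which is where the finite-rank TPC structure actually lives.

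Even after this correction your final gluing step is not an argument: the finite-rank equivalences relate subquotients of $\mathcal{C}$ and $\mathcal{C}'$ rather than nested subcategories, so they do not literally restrict to one another, and producing coherent isomorphisms $\mathbb{G}_{s+1}\rightsquigarrow\mathbb{G}_s$ from bare uniqueness-up-to-isomorphism is exactly the difficulty one has to solve. The paper sidesteps gluing entirely: following the strategy recalled in \S\ref{section:uniqueintro}, it builds one target category $\text{mod-}H$ of stable modules over the tower of cyclotomic Hecke algebras and one exact functor $\mathbb{U}=(\mathbb{U}_s)_s$ with $\mathbb{U}_s=\Hom_{\mathcal{C}}(T_s,-)$, where $T_s=\bigoplus_k F^k_{I_s}L(\kappa_s)$ is projective-injective (Lemma \ref{Lem2.20}). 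The cross-rank compatibility that you hope to extract from abstract uniqueness is instead supplied explicitly and canonically by Lemma \ref{Lem4.2} (the embeddings $\phi_s:H_s\into H_{s+1}$ and isomorphisms $\theta_s:T_s\isoto e_sT_{s+1}$, independent of the choice of $\mathcal{C}$), and the verification of (U1)--(U3) for $\mathbb{U}$ is Theorem \ref{thm:Uprop}. If you wish to salvage a gluing-style proof, you must at minimum construct such compatibility data by hand; as written, the proposal has a genuine gap at both the reduction and the assembly stages.
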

	
For $r<\infty$ this is a special case of the main result in \cite{LW13} so we will only actually provide a proof for the case $r=\infty$.

To establish the theorem, it suffices to show that for a given type $(\ur,\ud)$ there exists a Schurian category $\mathcal{D}$ with a categorical $\mathfrak{sl}_{I_r}$-action such that for any $\mathfrak{sl}_{I_r}$-TPC $\mathcal{C}$ of type $(\ur,\ud)$ there is an exact functor $\mathbb{U}:\mathcal{C} \to \mathcal{D}$ satisfying the following:

\begin{enumerate}[label=(U\arabic{*}), ref=(U\arabic{*})]
	\item \label{U1} $\mathbb{U}$ is strongly equivariant;
	
	\item \label{U2} $\mathbb{U}$ is fully faithful on projectives;
	
	\item \label{U3} for each $\lambda\in \Xi_{\ur,\ud}$, $Y(\lambda):= \mathbb{U} P(\lambda)$ is independent (up to isomorphism) of the choice of $\mathcal{C}$.
	\end{enumerate}

\noindent See \cite[\S2.7]{BLW13} for an explanation of why this is sufficient.

For the rest of Section \ref{section:uniquenessofTPCs} we fix an $\mathfrak{sl}_{\mathbb{Z}}$-TPC $\mc{C}$ of type $(\uinfty, \ud)$. In \S \ref{subsection:truncation} below we define a subquotient $\mathcal{C}_r$ of $\mathcal{C}$ that is an $\mathfrak{sl}_{I_r}$-TPC of type $(\ur,\ud)$ and construct the corresponding functors $\mathbb{U}_r$ satisfying \ref{U1}-\ref{U3}. Then in \S \ref{subsection:stablemods} we construct a functor $\mathbb{U}$ from $\mathcal{C}$ to a category of `stable modules' using the functors $\mathbb{U}_r$ and show that $\mathbb{U}$ satisfies \ref{U1}-\ref{U3}, thus establishing the theorem. Our proof of Theorem \ref{thm:uniquenessofTPCs} for $r=\infty$ closely follows the argument in \cite[Section 4]{BLW13}. The reader is referred there for many of the proofs in this section as they pass over with minimal change. Our $\Xi_{\ur,\ud}$ corresponds to $\Lambda_r$ in their notation and we have $I=\ZZ$.

\subsection{Reduction to finite intervals}\label{subsection:truncation}

Let $r_0:=\max\left\{n_1,\ldots, n_l\right\}$ and take $r$ such that $r_0\leq r<\infty$. Define subsets $\Xi^{\leq}_{\ur,\ud}$ and $\Xi^<_{\ur,\ud}$ of $\Xi_{\uinfty,\ud}$ as follows. Take $\lambda\in \Xi_{\uinfty,\ud}$ and choose $s\geq r$ such that $\lambda\in \Xi_{\us,\ud}$. Then $\lambda\in \Xi^\leq_{\ur,\ud}$ if it satisfies the following conditions:
\begin{empheq}[left=\empheqlbrace]{align}
	\begin{split}
	\sum_{i=1}^k \sum_{\substack{j\in I_s \\ j\leq h \\ \lambda^i_{j}\neq c_i}} (-1)^{c_i} \geq 0	& \text{ for all } h< \min (I_r^+) \text{ and } 1\leq k\leq l+1 \\
	\sum_{i=1}^k \sum_{\substack{j\in I_s \\ j\geq h \\ \lambda^i_{j}\neq c_i}} (-1)^{c_i} \leq 0		& \text{ for all } h> \max (I_r^+) \text{ and } 1\leq k\leq l+1
	\end{split}
	\end{empheq}

\noindent (we write $\parity$ as $c_{l+1}$ for convenience) and $\lambda\in \Xi^<_{\ur,\ud}$ if at least one of the above inequalities is strict. This definition is independent of the choice of $s$. These are ideals of $\Xi_{\uinfty,\ud}$ and $\Xi_{\ur,\ud}=\Xi^{\leq}_{\ur,\ud}\setminus\Xi^<_{\ur,\ud}$. Moreover the vector subspaces of $\bigwedge\nolimits^{\uinfty, \ud} V$ spanned by the corresponding $v_{\lambda}$ are $\mathfrak{sl}_{I_r}$-submodules.

Let $\mathcal{C}_{\leq r}$ (resp. $\mathcal{C}_{<r}$) be the Serre subcategory of $\mathcal{C}$ generated by those $L(\lambda)$ with $\lambda\in\Xi^{\leq}_{\ur,\ud}$ (resp. $\Xi^<_{\ur,\ud}$) and let $\mathcal{C}_r$ be the Serre quotient category
\begin{align}
	\mathcal{C}_r=\mathcal{C}_{\leq r}\left.\middle/\right.\mathcal{C}_{<r}.
	\end{align}
	
\noindent All three of these are highest weight categories. The functors $F_i$ and $E_i$ with $i\in I_r$ preserve the subcategories $\mathcal{C}_{\leq r}$ and $\mathcal{C}_{<r}$ and thus they have induced actions on $\mathcal{C}_r$. The following proposition is easily established by checking the necessary axioms:

\begin{trunc}
	The category $\mathcal{C}_r$ is an $\mathfrak{sl}_{I_r}$-tensor product categorification of type $(\ur,\ud)$ under the induced action of $F_{I_r}:=\bigoplus_{i\in I_r} F_i$, together with the restrictions of $x$ and $t$ to $F_{I_r}$ and $F_{I_r}^2$ respectively.
	\end{trunc}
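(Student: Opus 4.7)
The strategy is to check each TPC axiom for $\mathcal{C}_r$ in turn, tracking how the structures on the ambient category $\mathcal{C}$ descend through the Serre subcategory $\mathcal{C}_{\leq r}$ and then through the Serre quotient by $\mathcal{C}_{<r}$. At each step the combinatorial input is the corresponding statement on the $\mathfrak{sl}_{I_r}$-module side, which is immediate from the description of $\Xi^{\leq}_{\ur,\ud}$, $\Xi^<_{\ur,\ud}$ and the fact that $\bigwedge^{\ur,\ud} V_r$ is recovered from $\bigwedge^{\uinfty,\ud} V$ as the subquotient of $\mathfrak{sl}_{I_r}$-modules spanned by $v_\lambda$ with $\lambda\in\Xi^{\leq}_{\ur,\ud}$ modulo those with $\lambda\in\Xi^<_{\ur,\ud}$. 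Throughout, the argument mirrors \cite[Section 4]{BLW13}; the only extra bookkeeping comes from the additional semi-infinite wedge factor.

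\noindent \textbf{Highest weight structure.} First I would verify that $\mathcal{C}_r$ is a Schurian highest weight category with poset $\Xi_{\ur,\ud}$. Since $\Xi^{\leq}_{\ur,\ud}$ and $\Xi^<_{\ur,\ud}$ are ideals of $\Xi_{\uinfty,\ud}$, the general facts about Serre subcategories generated by ideals (as recalled in \cite[\S 2.7]{BLW13}) imply $\mathcal{C}_{\leq r}$ and $\mathcal{C}_{<r}$ are highest weight subcategories of $\mathcal{C}$, hence $\mathcal{C}_r$ is a highest weight category with poset $\Xi^{\leq}_{\ur,\ud}\setminus \Xi^<_{\ur,\ud}=\Xi_{\ur,\ud}$, endowed with the restriction of the partial order. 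Its standard objects $\Delta_r(\lambda)$ are the images of the $\Delta(\lambda)$ with $\lambda\in\Xi_{\ur,\ud}$ under the quotient functor.

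\noindent \textbf{Categorical action.} Next I would check \ref{CA1}--\ref{CA4} for $F_{I_r}=\bigoplus_{i\in I_r}F_i$ and its right adjoint $E_{I_r}=\bigoplus_{i\in I_r}E_i$ on $\mathcal{C}_r$, together with the restricted $x$ and $t$. Axiom \ref{CA1} is built into the definition; \ref{CA2} follows because the affine Hecke relations are satisfied on $F^k$, hence on the direct summand $F_{I_r}^k$, and these relations descend to any subquotient category; \ref{CA3} descends from $\mathcal{C}$ for the same reason, giving biadjointness of $E_i$ and $F_i$ on $\mathcal{C}_r$. The fact that $F_i$ and $E_i$ preserve both $\mathcal{C}_{\leq r}$ and $\mathcal{C}_{<r}$ — so that the induced endofunctors exist at all — follows from the preceding paragraph's combinatorial observation that the spans of $\{v_\lambda\mid \lambda\in \Xi^{\leq}_{\ur,\ud}\}$ and $\{v_\lambda\mid \lambda\in \Xi^<_{\ur,\ud}\}$ are $\mathfrak{sl}_{I_r}$-submodules of $\bigwedge^{\uinfty,\ud}V$, together with \ref{TPC2} for $\mathcal{C}$.

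\noindent \textbf{TPC axioms and main point.} For \ref{TPC1}, note that $F_i$ and $E_i$ preserve $\mathcal{C}^{\Delta}$ in the ambient category by assumption, and this property is transported by exact quotient functors to $\mathcal{C}_r^{\Delta}$ (using that $\Delta_r(\lambda)$ is the image of $\Delta(\lambda)$). For \ref{TPC2}, the identification $[\Delta_r(\lambda)]\mapsto v_\lambda$ defines a linear isomorphism $[\mathcal{C}_r^{\Delta}]\isoto \bigwedge^{\ur,\ud}V_r$ on bases; to see it intertwines the $\mathfrak{sl}_{I_r}$-actions, compare with the diagram
\begin{equation*}
\bigl[\mathcal{C}_{\leq r}^{\Delta}\bigr] \twoheadrightarrow \bigl[\mathcal{C}_r^{\Delta}\bigr], \qquad \bigwedge\nolimits^{\uinfty,\ud}V \supseteq \text{span}\{v_\lambda:\lambda\in\Xi^{\leq}_{\ur,\ud}\}\twoheadrightarrow \bigwedge\nolimits^{\ur,\ud}V_r,
\end{equation*}
which commute with the $f_i,e_i$ action for $i\in I_r$, the top being an instance of \ref{TPC2} for $\mathcal{C}$ restricted to $\mathcal{C}_{\leq r}^{\Delta}$. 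Finally \ref{CA4} is a formal consequence of \ref{TPC2}, since $\bigwedge^{\ur,\ud}V_r$ is integrable and the classes of indecomposable projectives in $\mathcal{C}_r$ project to weight vectors via the $\Delta$-flag multiplicities. The only genuinely delicate point — and the one to confirm carefully — is that the combinatorial conditions defining $\Xi^{\leq}_{\ur,\ud}$ and $\Xi^{<}_{\ur,\ud}$ are exactly the right ideals to make the subquotient $\mathfrak{sl}_{I_r}$-module match $\bigwedge^{\ur,\ud}V_r$; granting this (which is visible directly from the defining inequalities together with \eqref{wedgemap} and \eqref{indexsetwedgemap}), all the remaining checks are immediate.
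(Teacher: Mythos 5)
Your proposal is correct and matches the paper's approach: the paper simply records the preliminary facts (that $\Xi^{\leq}_{\ur,\ud}$ and $\Xi^{<}_{\ur,\ud}$ are ideals whose spans are $\mathfrak{sl}_{I_r}$-submodules, and that $F_i,E_i$ for $i\in I_r$ preserve $\mathcal{C}_{\leq r}$ and $\mathcal{C}_{<r}$) and then states that the proposition "is easily established by checking the necessary axioms," which is exactly the axiom-by-axiom verification you carry out, following \cite[\S 2.7 and Section 4]{BLW13}.
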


Define an equivalence relation `$\sim$' on $\Xi_{\uinfty,\ud}$ as follows. Take $\lambda,\mu \in\Xi_{\uinfty,\ud}$. Take $r<\infty$ such that $\lambda,\mu\in\Xi_{\ur,\ud}$ and write $\lambda\sim \mu$ if $\lvert \lambda\rvert_r=\lvert\mu\rvert_r$ (see Remark~\ref{wts}). This definition is independent of the choice of $r$. For $\lambda\in \Xi_{\uinfty,\ud}$, let $\mathcal{C}_{[\lambda]}$ be the Serre subcategory of $\mathcal{C}$ generated by those $L(\mu)$ with  $\mu\sim \lambda$. Then we can decompose
\begin{align} \label{eq:blocks}
	\mathcal{C}=\bigoplus_{[\lambda]\in \Xi_{\uinfty,\ud}/\sim} \mathcal{C}_{[\lambda]}
	\end{align}

\noindent If $r_0\leq r <\infty$ then $\Xi_{\ur,\ud}$ has a unique maximal element $\kappa_r$. It is the 01-matrix with the 1s in each row as far left as possible. The corresponding vector $v_{\kappa_r}$ is the highest weight vector in $\bigwedge^{\ur, \ud} V_r$.

\begin{prinj} \label{Lem2.20}
	$L(\kappa_r)\in\mathcal{C}$ is both projective and injective.
	\end{prinj}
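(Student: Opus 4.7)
The plan is to show that $\kappa_r$ is alone in its $\sim$-equivalence class $[\kappa_r]\subseteq \Xi_{\uinfty,\ud}$. Once this is in hand, the block $\mathcal{C}_{[\kappa_r]}$ from the decomposition~\eqref{eq:blocks} contains only the simple $L(\kappa_r)$, so by standard highest weight category theory one has $P(\kappa_r)=\Delta(\kappa_r)=L(\kappa_r)=\nabla(\kappa_r)=I(\kappa_r)$: the $\Delta$-flag of $P(\kappa_r)$ can only involve $\Delta(\mu)$ with $\mu\in[\kappa_r]$ and $\mu\geq \kappa_r$, while the composition factors of $\Delta(\kappa_r)$ likewise lie in $[\kappa_r]$, and both collapse to the singleton to yield projectivity and injectivity simultaneously.

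The finite-rank analog is immediate. By Proposition~\ref{trunc}, $\mathcal{C}_r$ is an $\mf{sl}_{I_r}$-TPC, and $\bigwedge\nolimits^{\ur,\ud}V_r$ is a tensor product of irreducible integrable highest-weight $\mf{sl}_{I_r}$-modules (fundamental representations and their duals), so its highest-weight space is one-dimensional and spanned by $v_{\kappa_r}$. Hence $\kappa_r$ is alone in its $\sim$-block inside $\Xi_{\ur,\ud}$, and in particular $L_r(\kappa_r)$ is projective-injective in $\mathcal{C}_r$.

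The crux is upgrading this uniqueness to the full $\Xi_{\uinfty,\ud}$. A useful preliminary observation is that within each individual factor $\bigwedge\nolimits^{n,\parity}V_s$ the weight determines the underlying $01$-tuple: the only linear relation among the $\varepsilon_j$ in $P_s$ is the telescoping identity $\sum_{j\in I_s^+}\varepsilon_j=0$, and two $01$-tuples with the same number of $1$s differ by a $\{-1,0,1\}$-valued coefficient vector summing to zero, so they cannot share a weight unless they coincide. Given $\mu\in\Xi_{\us,\ud}$ with $|\mu|_s=|\kappa_r|_s$, what remains is to show that the only decomposition of $|\kappa_r|_s$ as $\sum_{i=1}^{l+1}|\mu^i|_s$ with each summand a valid factor weight is the one read off from $\kappa_r$; this row-by-row combinatorial analysis, expanding in the $\omega_j$-basis via $\varepsilon_j=\omega_j-\omega_{j-1}$ and exploiting the rigid ``leftmost'' pattern of $\kappa_r$, is the main obstacle, and is perhaps best organized by induction on $l$ together with careful tracking of boundary effects near the fundamental weights $\omega_{1-s}$ and $\omega_{s-1}$.
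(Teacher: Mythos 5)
Your overall strategy is the paper's: reduce to showing that $\kappa_r$ is alone in its $\sim$-class, so that the block $\mathcal{C}_{[\kappa_r]}$ of \eqref{eq:blocks} contains the single simple $L(\kappa_r)$ and the highest weight formalism collapses $P(\kappa_r)=\Delta(\kappa_r)=L(\kappa_r)=\nabla(\kappa_r)=I(\kappa_r)$. That reduction is correct (it is exactly how the cited proof of \cite[Lemma~2.20]{BLW13} works, with \eqref{eq:blocks} replacing the weight decomposition there), and your preliminary observation that within a single wedge factor the weight determines the $01$-tuple is also correct.

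The problem is that you stop at the one step that carries the actual content of the lemma: you never prove that for every $s\geq r$ the only $\mu\in\Xi_{\us,\ud}$ with $\lvert\mu\rvert_s=\lvert\kappa_r\rvert_s$ is $\kappa_r$ itself, and you explicitly label it ``the main obstacle.'' Be aware that this is genuinely more than the finite-rank case you call immediate: for $s>r$ the weight $\lvert\kappa_r\rvert_s$ is \emph{not} the highest weight of $\bigwedge\nolimits^{\us,\ud}V_s$ (only the last tensor factor of the embedded $\kappa_r$ agrees with that of $\kappa_s$), so the ``each factor must contribute its highest weight'' argument is unavailable, and a priori the $\sim$-class of $\kappa_r$ could acquire elements of $\Xi_{\us,\ud}\setminus\Xi_{\ur,\ud}$. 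The claim is true, and here is one way to close it. Let $A_i\subseteq I_s^+$ be the set of positions $j$ with $(\kappa_r)^i_j\neq c_i$ and $B_i$ the corresponding sets for $\mu$; your kernel argument (the only relation among the $\varepsilon_j$ is $\sum_j\varepsilon_j=0$, and the signed total $\sum_i(-1)^{c_i}\lvert B_i\rvert$ agrees with $\sum_i(-1)^{c_i}\lvert A_i\rvert$) yields equality of the integer-valued functions $\sum_i(-1)^{c_i}\mathbf{1}_{B_i}=\sum_i(-1)^{c_i}\mathbf{1}_{A_i}=:f$ on $I_s^+$. Say $\parity=0$. Then $f(1-r)$ equals the number of tensor factors with $c_i=0$ (including $i=l+1$), which is the largest value the left-hand side can take at \emph{any} position; this forces $1-r\in B_i$ for every $i$ with $c_i=0$ and $1-r\notin B_i$ for every $i$ with $c_i=1$. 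Iterating this extremal argument at $2-r,3-r,\dots$ (at each step the $c_i=0$ rows with small $n_i$ are already exhausted, so $f$ again attains the maximum still achievable) pins down every $B_i$ with $c_i=0$, $i\leq l$; the residual identity then forces $B_{l+1}=\{1-s,\dots,0\}$ by cardinality, and the mirror argument working leftwards from position $r$ pins down the rows with $c_i=1$. Without some such argument, ``induction on $l$ with careful tracking of boundary effects'' is a plan rather than a proof, so as written the proposal has a genuine gap at its central step.
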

	
\begin{proof}
	This follows from the proof of \cite[Lemma~2.20]{BLW13}, using (\ref{eq:blocks}) above in place of \cite[(2.16)]{BLW13}.
	\end{proof}
		
\noindent For $r_0\leq r<\infty$, define $T_r:=\bigoplus_{k\geq 0}F^k_{I_r}L(\kappa_r)$ and $H_r:=\bigoplus_{k\geq 0} AH^{\lvert \kappa_r\rvert}_{r,k}$.

\begin{hecketensor}\cite[Theorem~4.1]{BLW13}
	The action of the affine Hecke algebras on $T_r$ induces a canonical algebra isomorphism $H_r\cong \End_\mathcal{C}(T_r)$.
	\end{hecketensor}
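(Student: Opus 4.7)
The plan is to construct a candidate map $\Phi_r\colon H_r\to\End_{\mathcal{C}}(T_r)$ directly from the natural transformations $x\in\End(F)$ and $t\in\End(F^2)$ packaged with the categorical action, and then verify it is an algebra isomorphism by comparing dimensions degree by degree in $k$. Axiom \ref{CA2} provides, for each $k\geq 0$, an algebra homomorphism $AH_k\to\End_{\mathcal{C}}(F^k)$ sending $x_j\mapsto F^{k-j}xF^{j-1}$ and $t_j\mapsto F^{k-j-1}tF^{j-1}$. Evaluating at $L(\kappa_r)$ and restricting to the summand $F^k_{I_r}L(\kappa_r)$, the idempotents $1_{\boldsymbol{i}}$ with $\boldsymbol{i}\notin I_r^k$ act as zero, so this factors through $AH_{r,k}\to\End_{\mathcal{C}}(F^k_{I_r}L(\kappa_r))$; summing over $k$ yields the candidate $\Phi_r$.

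The first substantive step is to show $\Phi_r$ descends to the cyclotomic quotient $H_r$, i.e.\ that $\prod_{i\in I_r}(x_1-i)^{|\kappa_r|\cdot\alpha_i}$ acts as zero on $FL(\kappa_r)$. Because $\kappa_r$ is the unique maximal element and $v_{\kappa_r}$ is a highest weight vector of $\bigwedge^{\ur,\ud}V_r$, axiom \ref{TPC2} gives $e_iv_{\kappa_r}=0$ for all $i\in I_r$. Since $L(\kappa_r)=P(\kappa_r)=\Delta(\kappa_r)$ and $E_i$ preserves $\Delta$-flags by \ref{TPC1}, linear independence of the $[\Delta(\mu)]$'s upgrades this to $E_iL(\kappa_r)=0$ in $\mathcal{C}$ itself. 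Combined with biadjointness and the Kac-Moody $2$-category relations implicit in \ref{CA1}--\ref{CA4}, this bounds the Jordan block sizes of $x_1$ on each $F_iL(\kappa_r)$ by precisely $|\kappa_r|\cdot\alpha_i$, yielding the cyclotomic relation.

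For bijectivity I would compare dimensions. By projectivity of $L(\kappa_r)$ and biadjointness, $\End_{\mathcal{C}}(F^k_{I_r}L(\kappa_r))\cong\Hom_{\mathcal{C}}(L(\kappa_r),E^k_{I_r}F^k_{I_r}L(\kappa_r))$, and a Mackey-type filtration of $E^k_{I_r}F^k_{I_r}L(\kappa_r)$, simplified dramatically by $E_iL(\kappa_r)=0$, recovers the Poincar\'e--Birkhoff--Witt dimension of $AH_{r,k}^{|\kappa_r|}$. Consequently $\Phi_r$ is surjective by a dimension count in each degree; injectivity follows because the Grothendieck classes $[F^k_{I_r}L(\kappa_r)]=f_{I_r}^kv_{\kappa_r}$ already form a faithful module for $H_r$ via \ref{TPC2}. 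The main obstacle is the second step: extracting the exact exponent $|\kappa_r|\cdot\alpha_i$ from the categorical data rather than a merely polynomial bound requires the full Kac-Moody $2$-category machinery, and this is where the argument leans heavily on \cite[Theorem~4.1]{BLW13}.
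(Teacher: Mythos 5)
The paper itself gives no proof of this proposition: it is quoted verbatim from \cite[Theorem~4.1]{BLW13}, whose proof in turn reduces to Rouquier's theorem that cyclotomic quotients of (quiver/affine) Hecke algebras categorify irreducible integrable highest weight modules \cite[Theorem~5.31]{Rou12}, applied to the highest weight object $L(\kappa_r)$, which satisfies $E_iL(\kappa_r)=0$ for all $i\in I_r$, has $\End_{\mathcal{C}}(L(\kappa_r))=\mathbb{C}$, and is projective--injective by Lemma~\ref{Lem2.20}. Your construction of the candidate map from \ref{CA2} and your verification that $L(\kappa_r)=\Delta(\kappa_r)=P(\kappa_r)$ (in its block) and that $E_iL(\kappa_r)=0$ are correct and match the set-up of that argument. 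But the first substantive step is not actually carried out: you concede that extracting the exact exponent $\lvert\kappa_r\rvert\cdot\alpha_i$ in the cyclotomic relation ``leans heavily on \cite[Theorem~4.1]{BLW13}.'' That is circular --- computing that exponent from $E_iL(\kappa_r)=0$ is precisely part of the content of the theorem being proved, and it is where the 2-representation-theoretic input of \cite{Rou12} enters.

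More seriously, the bijectivity argument has a genuine gap. Injectivity cannot follow from ``faithfulness'' of the $H_r$-action on the Grothendieck classes $[F^k_{I_r}L(\kappa_r)]$: the algebra $AH^{\lvert\kappa_r\rvert}_{r,k}$ has a large Jacobson radical (for instance $x_1-i$ acts nilpotently but in general nontrivially on $F_iL(\kappa_r)$), and every element of the radical acts as zero on anything detected in $\left[\mathcal{C}^{\Delta}\right]$, so no module visible at the Grothendieck-group level can be faithful. Nor does surjectivity follow ``by a dimension count'': equality of dimensions upgrades to an isomorphism only after one of injectivity or surjectivity is established independently, and your Mackey-filtration computation of $\dim\End_{\mathcal{C}}(F^k_{I_r}L(\kappa_r))$ would in any case have to be compared against $\dim AH^{\lvert\kappa_r\rvert}_{r,k}$, whose exact value is itself a nontrivial theorem. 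The clean way to close both gaps is the route of \cite{BLW13}: invoke \cite[Theorem~5.31]{Rou12} (together with the identification of degenerate cyclotomic Hecke algebras with cyclotomic quiver Hecke algebras when working in the Hecke formulation) rather than attempting the dimension count from scratch.
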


\noindent In particular, considering $T_r$ as a left $H_r$-module we can define a functor:
\begin{align}
	\mathbb{U}_r:=\Hom_\mathcal{C}(T_r,-):\mathcal{C}\longrightarrow \text{mod-}H_r
	\end{align}
	
\noindent By Lemma \ref{Lem2.20}, $T_r$ is both projective and injective and so $\mathbb{U}_r$ is exact. In the finite interval case, mod-$H_r$ fills the role of the category $\mathcal{D}$ mentioned in Section \ref{section:uniqueintro}; that is, there exists a categorical $\mathfrak{sl}_{I_r}$-action on mod-$H_r$ such that the functor $\mathcal{C}_r\to \text{mod-}H_r$ induced by $\mathbb{U}_r$ satisfies (U1)-(U3) (see \cite[Theorem~2.14 and Lemma~2.16]{BLW13}).

\subsection{Stable modules}\label{subsection:stablemods}

Take $r_0\leq r<\infty$. Then there exist $a\geq 0$, $p_1,\ldots,p_a\in \mathbb{N}$, and $s_1,\ldots,s_a\in I_r$ such that
\begin{align} \label{eq:hwvs}
	f^{(p_1)}_{s_1} f^{(p_2)}_{s_2} \cdots f^{(p_a)}_{s_a} v_{\kappa_{r+1}}=v_{\kappa_r}
	\end{align}

\noindent in $\bigwedge^{\uinfty, \ud} V$. Let $p=p_1+\cdots +p_a$. For $i\in \mathbb{Z}$ and $m\geq 1$, let $F^{(m)}_i$ denote the summand of $F^m_i$ that induces $f^{(m)}_i$ on the level of the Grothendieck group.

\begin{Lem4.2}\label{Lem4.2}
	There is an algebra embedding $\phi_r:H_r\to H_{r+1}$, independent of the choice of $\mathcal{C}$, such that $e_r:=\phi_r\left(1_{H_r}\right)$ acts as projection
	\begin{align}
		F^p_{I_{r+1}} L(\kappa_{r+1})\to F^{(p_1)}_{s_1} F^{(p_2)}_{s_2} \cdots F^{(p_a)}_{s_a} L(\kappa_{r+1})\cong L(\kappa_r)
		\end{align}
	
	\noindent in $\mc{C}$. Moreover there is an isomorphism $\theta_r:T_r\isoto e_r T_{r+1}$ intertwining the action of $H_r$ on $T_r$ with its action on $T_{r+1}$ via $\phi_r:H_r\isoto e_r H_re_r\subseteq H_{r+1}$.
	\end{Lem4.2}

\begin{proof}
	The assumption $r\geq r_0$ ensures that an identity of the form (\ref{eq:hwvs}) holds. With this, the proof is the same as that of \cite[Lemma 4.2]{BLW13}.
	\end{proof}

\begin{Lem4.2rmk}
	It is more natural to think of the above map in terms of the diagrammatics of the quiver Hecke algebra as in \cite[Section 4.1]{BLW13}. With this perspective, the map between cyclotomic quotients is induced by tensoring on the left with the diagram that gives projection $F^p_{I_{r+1}} \to F^{(p_1)}_{s_1} F^{(p_2)}_{s_2} \cdots F^{(p_a)}_{s_a}$ in the categorification of the half quantum group $\mc{U}_q^-\mf{sl}_{I_{r+1}}$. This diagram can be given explicitly, see for example \cite[Lemma~4.1]{Rou08}.
	\end{Lem4.2rmk}

Since $\phi_r:H_r\isoto e_rH_{r+1}e_r$, we can induct and restrict modules between the $H_r$ for varying $r$. Given a right (resp. left) $H_{r+1}$-module $M$, we consider $Me_r$ (resp. $e_rM$) as a right (resp. left) $H_r$-module via $\phi_r$. For $M\in \text{mod-}H_r$ and $N\in \text{mod-}H_{r+1}$, define 
\begin{align}
	\begin{gathered}
		\Ind_r^{r+1} M:=M\otimes_{H_r} e_r H_{r+1}\in \text{mod-}H_{r+1}\\
		\Res^{r+1}_r N:=Ne_r\in \text{mod-}H_r
		\end{gathered}
	\end{align}

\noindent More generally, we define functors
\begin{align}
	\begin{split}
		\Ind_r^s	& :=\Ind_{s-1}^s \circ\cdots\circ \Ind_r^{r+1} \\
		\Res_r^s	& :=\Res_r^{r+1}\circ\cdots\circ \Res_{s-1}^s
		\end{split}
	\end{align}
	
\noindent for any $s>r$.

\begin{hinfty}
	Let mod-$H_{\infty}$ be the category with objects sequences ${M=(M_r, \iota_r)_{r\geq r_0}}$ such that $M_r\in \text{mod-}H_r$ and
	\begin{equation}
		\iota_r:M_r\longrightarrow \Res^{r+1}_r M_{r+1}\subseteq M_{r+1}
		\end{equation}
	
	\noindent is an $H_r$-module isomorphism for each $r$. A morphism $f:M\to N$ in mod-$H_{\infty}$ is a sequence $f=(f_r)_{r\geq r_0}$ with $f_r\in \Hom_{H_r}(M_r,N_r)$ such that the following diagram commutes for all $r\geq r_0$:
	\begin{equation}
		\begin{tikzcd}
			M_r \ar[d, "f_r"'] \ar[r, "\iota_r"]	& M_{r+1} \ar[d, "f_{r+1}"] \\
			N_r \ar[r, "\iota_r"]			& N_{r+1} 
			\end{tikzcd}
		\end{equation}
	\end{hinfty}
	
For $r\geq r_0$, define a functor $\text{st}_r: \text{mod-}H_r\to \text{mod-}H_\infty$ by setting $\text{st}_r(M)=(M_s,\iota_s)_{s\geq r_0}$ where
\begin{align}
	M_s:=\begin{cases}
			\Ind_r^s M		& \text{if } s>r \\
			M			& \text{if } s=r \\
			\Res^r_s M	& \text{if } r_0\leq s<r.
			\end{cases}
	\end{align}

\noindent For $r_0\leq s<r$, $M_s=\text{Res}^{s+1}_s M_{s+1}$ so we can take $\iota_s=1_{M_s}$, and for $s\geq r$, $\iota_s$ is defined via the obvious natural isomorphism $1\isoto \Res^{s+1}_s \Ind^{s+1}_s$.

The functor $\text{st}_r$ corresponds to $\text{pr}_r^!$ in \cite[\S4.2]{BLW13}.

\begin{stable}
	An object in mod-$H_\infty$ is \emph{$r$-stable} if it is in the essential image of $\text{st}_r$. It is \emph{stable} if it is $r$-stable for some $r\geq r_0$. Let mod-$H$ denote the full subcategory of mod-$H_\infty$ consisting of stable modules.
	\end{stable}

Recall the functors $\mathbb{U}_r$ from \S \ref{subsection:truncation}. Define a functor
\begin{align}
	\begin{split}
		\mathbb{U}:\mathcal{C}	& \longrightarrow \text{mod-}H_\infty \\
		M					& \longmapsto(\mathbb{U}_r M,\iota_r)_{r\geq r_0}
		\end{split}
	\end{align}

\noindent where
\begin{align}
	\begin{split}
		\iota_r:\Hom_{H_r}(T_r,M)		& \longisoto \Hom_{H_{r+1}}(e_r T_{r+1},M) \\
		\phi						& \longmapsto \phi\circ (\theta_r)_{-1}
		\end{split}
	\end{align}

\noindent It is defined on morphisms by setting $\mathbb{U}f=(\mathbb{U}_rf)_{r\geq r_0}$.

\begin{Uprop} \label{thm:Uprop}
	The category mod-$H$ is Schurian and $\mathbb{U}M\in \text{mod-}H$ for every $M\in\mathcal{C}$. Moreover, there exists a categorical $\mathfrak{sl}_\mathbb{Z}$-action on mod-$H$ such that $\mathbb{U}$ satisfies (U1)-(U3).
	\end{Uprop}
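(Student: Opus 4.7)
The plan is to transport the finite-interval argument of \cite[Theorem~4.5]{BLW13} to the semi-infinite setting, in three parts.

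First, I would establish the Schurian highest weight structure on $\text{mod-}H$. For $\lambda\in\Xi_{\uinfty,\ud}$ and any $r\geq r_0$ with $\lambda\in\Xi_{\ur,\ud}$, the indecomposable projective $Y_r(\lambda)$, standard $\Delta_{H_r}(\lambda)$ and irreducible $L_{H_r}(\lambda)$ in $\text{mod-}H_r$ are supplied by \cite[Lemma~2.16]{BLW13}. The key compatibility to verify is
\[
	\text{st}_{s}\bigl(X_{H_{s}}(\lambda)\bigr)\cong\text{st}_r\bigl(X_{H_r}(\lambda)\bigr)\qquad\text{for } s\geq r \text{ and } X\in\{L,\Delta,Y\},
\]
which reduces to the defining property of $e_r$ from Lemma~\ref{Lem4.2} together with the combinatorial embedding $\Xi_{\ur,\ud}\into\Xi_{\us,\ud}$ used in the construction of $\text{st}_r$. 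These glue to give objects $L_H(\lambda)$, $\Delta_H(\lambda)$, $Y(\lambda)$ in $\text{mod-}H$. Interval-finiteness of $\Xi_{\uinfty,\ud}$ together with the block decomposition (\ref{eq:blocks}) forces finite length, and one-dimensionality of simple endomorphism rings transfers directly from the finite-rank level.

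The main obstacle is showing $\mathbb{U}M\in\text{mod-}H$ for every $M\in\mc{C}$. Using the block decomposition and finite length, $M$ has only finitely many composition factors $L(\mu_1),\ldots, L(\mu_N)$; choose $r\geq r_0$ large enough that every $\mu_i\in\Xi_{\ur,\ud}$. I would then show that for all $s\geq r$ the canonical comparison morphism $\Ind_r^s\,\mathbb{U}_rM\longto\mathbb{U}_sM$ induced by $\iota_r,\ldots,\iota_{s-1}$ is an isomorphism. Both sides are exact in $M$ (the left because $e_rH_{r+1}$ is projective over $H_r$ via $\phi_r$), so dévissage reduces the claim first to composition factors and then to standards. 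The remaining computation on standards is the semi-infinite analogue of \cite[Lemma~4.4]{BLW13}; it depends only on the defining property of $\theta_r$ and on the decomposition of $F^k_{I_r}L(\kappa_r)$ into summands indexed by the combinatorics of $\Xi_{\ur,\ud}$, both of which carry over verbatim.

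Finally, for the categorical action and axioms \ref{U1}--\ref{U3}: given $j\in\ZZ$ and any $r$ large enough with $j\in I_r$, the endofunctors $F_j^{H_r}$ and $E_j^{H_r}$ on $\text{mod-}H_r$ commute up to canonical natural isomorphism with $\Ind_r^{r+1}$ and $\Res_r^{r+1}$, which lets one assemble them into functors $F_j^H$, $E_j^H$ on $\text{mod-}H$ by applying them in each coordinate of a stable module. The natural transformations $x$ and $t$, the Hecke relations, biadjunction, and integrability all inherit from the finite-rank level, verifying \ref{CA1}--\ref{CA4}. Axiom \ref{U1} is immediate from the construction of $\mathbb{U}$ as the limit of the strongly equivariant $\mathbb{U}_r$. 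Axiom \ref{U2} follows from the fully-faithfulness of each $\mathbb{U}_r$ on projectives \cite[Theorem~2.14]{BLW13} combined with the stability established above, since any projective in $\mc{C}$ is a summand of some $F^k L(\kappa_r)$ and $\mathbb{U}$ sends these to the prorepresentables $\text{st}_r(H_{r,k})$. Axiom \ref{U3} is then immediate because for $r$ sufficiently large $\mathbb{U}P(\lambda)\cong\text{st}_r\bigl(Y_r(\lambda)\bigr)$ and $Y_r(\lambda)$ is already known to be independent of $\mc{C}$ from the finite-interval case.
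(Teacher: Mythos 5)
Your proposal follows the same route as the paper, which simply observes that the arguments of \cite[Theorems~4.7, 4.9, 4.10 and \S4.3]{BLW13} go through formally unchanged once Lemma~\ref{Lem2.20} and Lemma~\ref{Lem4.2} are substituted for their finite-interval counterparts; your three parts correspond exactly to those three theorems. The only caveat is that your d\'evissage for $\mathbb{U}M\in\text{mod-}H$ leans on exactness of $\Ind_r^s$ (projectivity of $e_rH_{r+1}$ over $H_r$), whereas the cited argument only needs right-exactness together with a projective presentation and exactness of $\mathbb{U}$ — a detail, not a different approach.
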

	
\begin{proof}
	The analogous results in \cite{BLW13} comprise Theorems 4.7, 4.9, and 4.10, and \S4.3. The proof is formally identical: our Lemma \ref{Lem2.20} takes the place of \cite[Lemma~2.20]{BLW13} , our Lemma \ref{Lem4.2} takes the place of \cite[Lemma~4.2]{BLW13}, and \cite[Theorem~2.24]{BLW13} and its proof go through unchanged.
	\end{proof}

\noindent By the discussion in \S \ref{section:uniqueintro} we have established Theorem \ref{thm:uniquenessofTPCs}.

\begin{rmk:prinjectives}
	As mentioned in the proof above, \cite[Theorem~2.24]{BLW13} holds in our setting. This gives a classification of projective-injective objects in $\mc{C}$. Together with Theorem~\ref{infranktpc} this yields a classification on projective-injective modules in the infinite-rank categories $\catO^{++}_{\parity}$ described in the next section.
	\end{rmk:prinjectives}

\section{Categories $\mathcal{O}$}\label{section:catO}

Fix $\parity\in\{0,1\}$. The aim of this section is to show that a certain infinite-rank limit $\mathcal{O}_{\parity}^{++}$ of parabolic BGG categories of general linear Lie superalgebras is an $\mf{sl}_{\ZZ}$-tensor product categorification of type $(\uinfty, \ud)$. In \S\ref{subsec:Set-up} we set up the necessary notation and define the category $\mathcal{O}^{++}_{\parity}$. In \S\ref{subsec:HWC} we show that it has enough projectives and conclude that it is a highest weight category. Finally in \S\ref{subsec:cataction} we describe the actions of $E$ and $F$ on $\mathcal{O}^{++}_{\parity}$ and show that it is an $\mf{sl}_{\ZZ}$-TPC. This yields the super duality equivalence $\catO^{++}_0\cong\catO^{++}_1$.

Since $\parity$ will be fixed until the discussion of super duality at the end of this section, we will write $\catO^{++}$ for $\catO^{++}_{\parity}$ up to that point. Our constructions also depend on the sequences $n_1,\ldots, n_l\in\mathbb{N}$ and $c_1,\ldots,c_l\in\left\{0,1\right\}$ which we fixed in Section \ref{sec:intro}. For cleanness we will always drop reference to these from our notation.
	
\subsection{Set up}\label{subsec:Set-up}

Let $m=\sum_{c_i=1} n_i$ and $n=\sum_{c_i=0} n_i$. For notational convenience we will sometimes write $n_{l+1}=r$ and $c_{l+1}=\parity$.

For $1\leq j\leq m+n+r$, define $p_j=c_i\in \{0,1\}$, where $1\leq i\leq l+1$ is maximal such that ${n_1+\cdots +n_{i-1}<j}$. For $r<\infty$ let $U_r$ be the vector superspace with basis $u_1,\ldots ,u_{m+n+r}$, where $u_j$ has degree $p_j$. Let $u_1^*\ldots,u_{m+n+r}^*\in U_r^*$ be the dual basis. Let $\mathfrak{g}_r:=\mathfrak{gl}(U_r)$ be the Lie superalgebra of endomorphisms of $U_r$ under the supercommutator bracket. Let $U=U_\infty=\bigcup_{r=1}^\infty U_r$ and let $\mathfrak{g}=\mathfrak{g}_\infty= \displaystyle{\lim_{\longrightarrow}} \, \mathfrak{g}_r$ be the Lie superalgebra of endomorphisms of $U$ that vanish on all but finitely many of the $u_j$. 

For $r<\infty$, let $\left\{ \, e_{ij} \,\middle|\, 1\leq i,j\leq m+n+r \, \right\}$ be the basis of matrix units for $\mathfrak{g}_r$. Denote by $\mathfrak{b}_r$  the Borel subalgebra of $\mathfrak{g}_r$ consisting of upper triangular matrices. Let $\mathfrak{h}_r$ be the Cartan subalgebra of $\mathfrak{g}_r$ with basis $\left\{ \, e_{ii} \,\middle|\, 1\leq i\leq m+n+r \, \right\}$ and let $\left\{ \delta_i \,\middle|\, 1\leq i\leq m+n+r\right\}\subseteq\mathfrak{h}_r^*$ be the dual basis. Define a bilinear form $\left(-,-\right)$ on $\mathfrak{h}_r^*$ by declaring that
\begin{equation}
	\left( \delta_i,\delta_j \right)=\begin{cases}
							(-1)^{p_i}	& \text{if } i=j \\
							0		& \text{otherwise}
							\end{cases}
	\end{equation}

\noindent Let $\Phi_r=\left\{ \, \delta_i - \delta_j \,\middle|\, 1\leq i,j\leq m+n+r, \, i\neq j \, \right\}$ be the root system for $\mathfrak{g}_r$. A root $\delta_i-\delta_j$ is even if $p_i=p_j$ and is odd otherwise. It is positive (with respect to $\mathfrak{b}_r$) if $i<j$ and is negative otherwise. Let $\Phi_{r, \overline{0}}$, $\Phi_{r, \overline{1}}$, $\Phi_{r}^+$, and $\Phi_r^-$ be the sets of even, odd, positive, and negative roots respectively and decompose $\Phi_r^+=\Phi_{r, \overline{0}}^+\sqcup \Phi_{r, \overline{1}}^+$ and $\Phi_r^-=\Phi_{r, \overline{0}}^-\sqcup \Phi_{r, \overline{1}}^-$ in the obvious way.

For $r<\infty$, define the Weyl vector
\begin{align}
	\overline{\rho}_r = \frac{1}{2} \sum_{\alpha\in \Phi_{r, \overline{0}}^+} \alpha - \frac{1}{2} \sum_{\beta\in\Phi_{r, \overline{1}}^+} \beta \in\mathfrak{h}_r^*
	\end{align}

\noindent and the normalized version
\begin{align}
	\rho_r=\overline{\rho}_r+\left( \frac{n-m+1-(-1)^{\parity} r}{2}\right) \sum_{i=1}^{m+n+r} (-1)^{p_i} \delta_i.
	\end{align}

\noindent This has the following properties:
\begin{align} \label{weylprop}
		\left( \rho_r,\delta_i-\delta_{i+1} \right)	 &= \begin{cases}
											(-1)^{p_i}	& \text{if } p_i=p_{i+1} \\
											0		& \text{otherwise}
											\end{cases}
		&&
		\left( \rho_r,\delta_{m+n+r} \right)		 &=\begin{cases}
											1-r	& \text{if } \parity=0 \\
											r	& \text{if } \parity=1
											\end{cases}
	\end{align}	

\noindent and if $r<s$ then $\rho_r$ is the restriction of $\rho_s$ to $\mathfrak{h}_r$.

For $r<\infty$, let 
\begin{equation}
	X_r=\bigoplus_{j=1}^{m+n+r} \mathbb{Z}\delta_j
	\end{equation}
	
\noindent be the integral weight lattice for $\mathfrak{g}_r$ and define
\begin{equation*}
	\begin{gathered}
			X_r^+ 	 = \left\{ \, \lambda\in X_r \,\middle|\, (-1)^{p_j}(\lambda+\rho_r,\delta_j-\delta_{j+1})>0 \text{ unless } j=n_1+\cdots+n_i \text{ for some }i \right\} \vspace{.1in},\\
			X_r^{++} 	 = \left\{ \lambda\in X_r^+ \,\middle|\, (-1)^{\parity}(\lambda,\delta_{m+n+r})\geq 0\right\}.
		\end{gathered}
	\end{equation*}

\noindent Using the natural embeddings $X_r^{++}\subseteq X_{r+1}^{++}$, define $X^{++}=X_\infty^{++}=\bigcup \, X_r^{++}$.

We will identify these sets with certain indexing sets for wedges from Section \ref{section:combinatorics}. Take $r<\infty$. Take $\lambda\in X_r$ and define a corresponding 01-matrix $( \lambda^i_{j} )_{1\leq i\leq l+1, j\in \mathbb{Z}}$ as follows. Fix $1\leq i\leq l+1$ and let $k=n_1+\cdots +n_{i-1}$. Set
\begin{align}
	\lambda^i_{j} = \begin{cases}
					1-c_i	& \text{for } j= \left( \lambda+\rho_r,\delta_{k+1} \right),\ldots ,\left( \lambda+\rho_r,\delta_{k+n_i} \right) \\
					c_i	& \text{otherwise}
					\end{cases}
	\end{align}

\noindent This provides an identification between $X_r^+$ and the indexing set for the basis of the module
\begin{equation}
	\bigwedge\nolimits^{n_1,c_1}V\otimes\cdots\bigwedge\nolimits^{n_l,c_l}V\otimes \bigwedge\nolimits^{r,\parity}V.
	\end{equation}
	
\noindent We will freely identify these sets and use the inherited partial order on $X_r^+$. For $r<s$, the linear map 
\begin{equation}
	\text{span}\left\{ \, v_\lambda \,\middle|\, \lambda\in X_r^{++} \, \right\} \longrightarrow \text{span}\left\{ \, v_\lambda \,\middle|\, \lambda\in X_s^{++} \, \right\}
	\end{equation}
	
\noindent induced by the inclusion $X_r^{++}\subseteq X_s^{++}$ coincides with the map induced by the assignment (\ref{wedgemap}) and the direct limit along these maps is $\bigwedge\nolimits^{\uinfty, \ud} V$. Thus we can identify $X^{++}$ and $\Xi_{\uinfty, \ud}$. This induces a partial order on $X^{++}$ compatible with the partial orders on the $X_r^{++}$.

\begin{catOfinitewedges}
	For finite $r$, the indexing set $\Xi_{\ur,\ud}$ corresponds to \textbf{a proper subset} of $X_r^{++}$. The former indexes tensor products of wedges of $V_r$s and $W_r$s and the latter indexes tensor products of wedges of $V=V_{\infty}$s and $W=W_{\infty}$s with the wedges in the final tensor factor restricted just enough that the assignment (\ref{wedgemap}) is well behaved.
	\end{catOfinitewedges}

For $r\leq\infty$, define a Levi subalgebra $\mathfrak{l}_r$ of $\mathfrak{g}_r$ by
\begin{align}
	\mathfrak{l}_r=\mathfrak{gl}_{n_1}\oplus \cdots \mathfrak{gl}_{n_l}\oplus \mathfrak{gl}_r
	\end{align}

\noindent and let $\mathfrak{p}_r=\mathfrak{l}_r+\mathfrak{b}_r$ be the corresponding parabolic subalgebra. For $\lambda\in X_r^+$ (or $\lambda\in X^{++}$ if $r=\infty$), let $L_r^0(\lambda)$ be the irreducible $\mathfrak{l}_r$-module of highest weight $\lambda$. The corresponding parabolic Verma module is
\begin{align}
	\Delta_r(\lambda)=\mathcal{U}\mathfrak{g}_r\otimes_{\mathcal{U}\mathfrak{p}_r} L_r^0(\lambda)
	\end{align}

\noindent It has a unique irreducible quotient $L_r(\lambda)$.

For $r<\infty$, let $\mathcal{O}_r$ be the integral BGG category $\mathcal{O}$ for $\mathfrak{g}_r$; the category of finitely-generated, $\mathfrak{h}_r$-semisimple, integral weight $\mathfrak{g}_r$-modules that are locally $\mathfrak{b}_r$-finite. Morphisms are all (not necessarily even) homomorphisms of $\mf{g}_r$-modules. Let $\mathcal{O}_r^+$ be the parabolic subcategory of $\mathcal{O}^+_r$ associated to $\mathfrak{p}_r$; the full subcategory of $\mathcal{O}_r$ consisting of modules that are $\mathfrak{l}_r$-semisimple and locally $\mathfrak{p}_r$-finite. Equivalently, $\mathcal{O}_r^+$ is the Serre subcategory of $\mathcal{O}_r$ generated by the $L_r(\lambda)$ with $\lambda\in X_r^+$. It contains the parabolic Verma modules $\Delta_r(\lambda)$ for $\lambda \in X_r^+$. Let $\mathcal{O}_r^{++}$ be the Serre subcategory of $\mathcal{O}_r^+$ generated by $\left\{ L_r(\lambda) \,\middle|\, \lambda\in X_r^{++} \right\}$. Finally let $\mathcal{O}^{++}=\mathcal{O}_\infty^{++}$ be the category of finitely generated, finite length, $\mathfrak{h}$-semisimple $\mathfrak{g}$-modules that are locally $\mathfrak{p}_r$-finite for each $r<\infty$ and whose composition factors are of the form $L(\lambda)=L_{\infty}(\lambda)$ with $\lambda\in X^{++}$. It is abelian and contains the parabolic Verma modules $\Delta(\lambda)=\Delta_{\infty}(\lambda)$ for $\lambda\in X^{++}$.

\begin{equivcatOs}
	The analogous categories in \cite[Definition~3.1]{BLW13} are constructed slightly differently. They add a parity assumption to the weight spaces that ensures all morphisms are even. The different constructions yield equivalent categories, see e.g. \cite[\S~2.5]{CL09}.
	\end{equivcatOs}

Take $r\leq \infty$. The supertranspose $x^{\text{st}}$ of a matrix $x=(x_{ij})\in\mf{g}_r$ is the matrix with $(i,j)$-entry $(-1)^{p_i(p_i+p_j)}x_{ji}$. The assignment $x\mapsto x^{\text{st}}$ defines an anti-automorphism of $\mf{g}_r$. If $M\in\catO_r$, define its dual by
\begin{equation}
	M^{\vee}=\bigoplus_{\lambda\in X_r}M_{\lambda}^*
	\end{equation}
	
\noindent with $\mf{g}_r$-action given by
\begin{equation}
	(x\cdot f)(m):=(-1)^{\lvert x\rvert\cdot \lvert f\rvert}f(x^{st}\cdot m),
	\end{equation}

\noindent where $x$ and $f$ are homogeneous and $\lvert \, \cdot \, \rvert$ denotes their parity. This defines exact, contravariant, self-equivalences on $\catO_r$, $\catO_r^+$, and $\catO_r^{++}$. The dual Verma module corresponding to $\lambda\in X_r$ is
\begin{equation}
	\nabla_r(\lambda):=\Delta_r(\lambda)^{\vee}.
	\end{equation}

The following important functors were first introduced in \cite[Definition~3.4]{CWZ06}.

\begin{def:trunc}\cite[Definition 3.1]{CW07}\label{def:trunc}
	For $r<s\leq\infty$ and $M=\displaystyle{\bigoplus_{\lambda\in X_s}} M_{\lambda}\in \mathcal{O}_s^{++}$, define
	\begin{equation}
		\text{tr}_r^s(M) = \bigoplus_{\lambda\in X_r} M_\lambda \in \cato_r^{++},
		\end{equation}

	\noindent where we regard $X_r\subseteq X_s$. This defines an exact \emph{truncation functor} $\tr_r^s:\cato_s^{++}\to\cato_r^{++}$.
	\end{def:trunc}
	
\noindent We will sometimes drop the sub/superscripts when they are clear from context.

\begin{tilt} \label{tilt}
	If $r<s\leq \infty$, $\lambda\in X_s^{++}$, and $Y=L,~\Delta$, or $\nabla$, then
	\begin{align}
		\tr_r^s~Y_s(\lambda)=\begin{cases}
							Y_r(\lambda)	& \text{if } \lambda\in X_r^{++} \\
							0			& \text{otherwise.}
							\end{cases}
		\end{align}
	\end{tilt}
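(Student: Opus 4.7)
The plan is to handle $Y=\Delta,\nabla,L$ in sequence, with the Verma case doing the real work and the other two following formally.

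For $Y=\Delta$ I would use the PBW decomposition $\Delta_s(\lambda)\cong \mathcal{U}\mathfrak{n}_s^-\otimes_\mathbb{C} L_s^0(\lambda)$ as $\mathfrak{h}_s$-modules, where $\mathfrak{n}_s^-$ is the opposite nilradical of $\mathfrak{p}_s$. Split $\mathfrak{n}_s^-=\mathfrak{n}_r^-\oplus \mathfrak{n}_{r,s}^-$, where $\mathfrak{n}_{r,s}^-$ is spanned by the $e_{ij}$ with $i\in\{m+n+r+1,\ldots,m+n+s\}$ and $j\leq m+n$; a short check shows this is an abelian ideal of $\mathfrak{n}_s^-$ normalized by $\mathfrak{n}_r^-$. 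A classical branching argument for $\mathfrak{gl}_s\downarrow \mathfrak{gl}_r\times\mathfrak{gl}_{s-r}$, using the vanishing of the Littlewood--Richardson coefficient $c^{\lambda^{l+1}}_{\mu,0}$ unless $\mu=\lambda^{l+1}$ and $\lambda^{l+1}$ has length at most $r$, identifies $\tr_r^s L_s^0(\lambda)$ with $L_r^0(\lambda)$ when $\lambda\in X_r^{++}$ and with zero otherwise (using that $X_r^{++}=X_r\cap X_s^{++}$, which is an easy check on the defining inequalities once one recalls that $\rho_s|_{\mathfrak{h}_r}=\rho_r$). Then observe that every non-trivial monomial in $\mathcal{U}\mathfrak{n}_{r,s}^-$ strictly increases the $\delta_k$-coefficient of a weight for some $k>m+n+r$, while weights of $L_s^0(\lambda)$ have non-negative $\delta_k$-coefficients in the last block (the $\mathfrak{gl}_s$-factor being a polynomial representation in the coefficient basis). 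Hence only the trivial element of $\mathcal{U}\mathfrak{n}_{r,s}^-$ can contribute to $X_r$-weights, and the obvious $\mathfrak{g}_r$-equivariant map $\Delta_r(\lambda)\to \tr_r^s\Delta_s(\lambda)$ induced by $L_r^0(\lambda)\hookrightarrow L_s^0(\lambda)$ realizes the desired isomorphism (or its vanishing).

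The case $Y=\nabla$ is then immediate: $\tr_r^s$ commutes with contragredient duality since both functors are defined weight-space by weight-space, so $\tr_r^s\nabla_s(\lambda)\cong (\tr_r^s\Delta_s(\lambda))^\vee$. For $Y=L$, exactness of $\tr_r^s$ applied to the surjection $\Delta_s(\lambda)\twoheadrightarrow L_s(\lambda)$ gives $\tr_r^s\Delta_s(\lambda)\twoheadrightarrow \tr_r^s L_s(\lambda)$; when $\lambda\notin X_r^{++}$ the source is zero. When $\lambda\in X_r^{++}$, $\tr_r^s L_s(\lambda)$ is a self-dual (since $L_s(\lambda)^\vee\cong L_s(\lambda)$) non-zero quotient of $\Delta_r(\lambda)$. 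Dualizing the quotient map produces an embedding $\tr_r^s L_s(\lambda)\hookrightarrow \nabla_r(\lambda)$, and the composition $\Delta_r(\lambda)\twoheadrightarrow \tr_r^s L_s(\lambda)\hookrightarrow \nabla_r(\lambda)$ is a non-zero element of the one-dimensional space $\Hom(\Delta_r(\lambda),\nabla_r(\lambda))$. Its image is therefore $L_r(\lambda)$, forcing $\tr_r^s L_s(\lambda)\cong L_r(\lambda)$.

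The main obstacle is the weight-counting step for $Y=\Delta$, which requires careful bookkeeping between the paper's bilinear pairing $(\lambda,\delta_j)$ and the $\delta_j$-coefficient of $\lambda$ (which differ by $(-1)^{p_j}$); once aligned, one sees that in the coefficient basis the $\mathfrak{gl}_s$-factor of $L_s^0(\lambda)$ is genuinely polynomial regardless of $\parity$, so $\parity=0$ and $\parity=1$ reduce to the same branching computation. The case $s=\infty$ requires no essential modification, as the PBW decomposition, branching rule, and weight analysis all pass through the direct limit $\mathfrak{g}_\infty=\varinjlim\mathfrak{g}_s$.
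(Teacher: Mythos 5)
Your proposal is correct, but it takes a genuinely different route from the paper: the paper disposes of the cases $Y=L$ and $Y=\Delta$ in one line by citing \cite[Proposition 7.5]{CLW12}, and then handles $Y=\nabla$ exactly as you do, via the observation that $\tr_r^s$ commutes with $(-)^\vee$ weight-space by weight-space. What you have written is essentially a self-contained reproof of the cited result. Your $\Delta$-case argument --- the PBW factorization $\mathcal{U}\mathfrak{n}_s^-\cong\mathcal{U}\mathfrak{n}_{r,s}^-\otimes\mathcal{U}\mathfrak{n}_r^-$ combined with the observation that each $e_{ij}\in\mathfrak{n}_{r,s}^-$ has weight $\delta_i-\delta_j$ with $i>m+n+r$, so any nontrivial monomial pushes a weight out of $X_r$, given that the last Levi block of $L_s^0(\lambda)$ is polynomial --- is sound, and it has the pedagogical merit of making visible exactly where the condition defining $X_s^{++}$ (as opposed to $X_s^+$) is used: the non-negativity of the new $\delta_k$-coefficients is what prevents cancellation between the $\mathcal{U}\mathfrak{n}_{r,s}^-$-contribution and the $L_s^0(\lambda)$-weight. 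Your $Y=L$ argument (self-duality of $L_s(\lambda)$ plus $\dim\Hom(\Delta_r(\lambda),\nabla_r(\lambda))=1$) is the standard one and is fine; note only that for $s=\infty$ you should truncate to finite $r$ where $\mathcal{O}_r^{++}$ is already known to be highest weight, which is how you have arranged it. In short: the paper buys brevity with a citation; you buy self-containedness at the cost of the branching and weight bookkeeping, all of which checks out.
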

	
\begin{proof}
	For $Y=L$ or $\Delta$ this is \cite[Proposition 7.5]{CLW12}. For $Y=\nabla$ this follows from the $Y=\Delta$ case and the observation that duality commutes with truncation.
	\end{proof}
	
Take $M\in \catO^{++}$ and $r'\in\NN$ such that if $\lambda\in X^{++}$ with ${\left[ M:L(\lambda)\right]\neq 0}$ then $\lambda\in X_r^{++}$. For $r\in\NN$, let $M_r:=\tr^{\infty}_r(M)$. The proposition implies that the composition multiplicities of $M_r$ are independent of $r\geq r'$ in the sense that if $\infty\geq s>r\geq r'$ and $\lambda\in X_s^{++}$ then
\begin{equation}\label{eq:multstable}
	\left[ M_s:L_s(\lambda)\right] = \begin{cases}
								\left[ M_r:L_r(\lambda)\right]	& \text{if } \lambda\in X_r^{++} \\
								0						& \text{otherwise.}
								\end{cases}
	\end{equation}

\noindent The following lemma gives a converse to this statement.

\begin{limitinO}\label{limitinO}
	Take $r'\in\NN$. Suppose we have modules $M_r\in\catO_r^{++}$ and injective $\mf{g}_r$-module maps ${f_r:M_r\to\tr^{r+1}_r(M_{r+1})\subseteq M_{r+1}}$ for all $r\geq r'$. Suppose further that the composition multiplicities of the $M_r$ are independent of $r\geq r'$ as above. Then the $f_r$ are isomorphisms and the direct limit $M:=\displaystyle{\lim_{\longrightarrow}}~M_r$ along the maps $f_r$ is a module in $\catO^{++}$.
	\end{limitinO}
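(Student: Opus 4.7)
The plan is to first upgrade each $f_r$ to an isomorphism onto $\tr^{r+1}_r M_{r+1}$, then realize $M$ concretely as the union $\bigcup_{r\ge r'}M_r$ and verify the defining properties of $\catO^{++}$ one at a time.

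I would begin by observing that each $f_r$ with $r\ge r'$ is an isomorphism onto $\tr^{r+1}_r M_{r+1}$. The stability hypothesis \eqref{eq:multstable} forces every composition factor of $M_{r+1}$ to be of the form $L_{r+1}(\lambda)$ with $\lambda\in X_r^{++}$, each occurring with multiplicity $[M_r:L_r(\lambda)]$. Since $\tr^{r+1}_r$ is exact and, by Proposition~\ref{tilt}, sends $L_{r+1}(\lambda)\mapsto L_r(\lambda)$ for such $\lambda$, the module $\tr^{r+1}_r M_{r+1}$ has the same finite composition length and the same multiplicities as $M_r$. An injective morphism between finite-length modules of equal length has trivial cokernel and is therefore an isomorphism, so $f_r$ is an isomorphism.

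Having identified $M_r$ with the $\mathfrak{g}_r$-submodule $\tr^{r+1}_r M_{r+1}\subseteq M_{r+1}$ via $f_r$, the direct system becomes a nested chain of submodules whose union $M=\bigcup_{r\ge r'}M_r$ carries the expected $\mathfrak{g}=\bigcup\mathfrak{g}_r$-action. It is immediate from the construction that $M_\lambda=(M_r)_\lambda$ for every $\lambda\in X_r$ and $r\ge r'$, and that $e_{ii}$ with $i>m+n+r$ acts as zero on $M_r$ (because $M_r$ sits inside each higher $M_t$ as the $X_r$-weight subspace); together these give $\mathfrak{h}$-semisimplicity with integral weights in $X$. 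For local $\mathfrak{p}_r$-finiteness, given $v\in M$ choose $s\ge r$ with $v\in M_s$: since $M_s$ is $\mathfrak{g}_s$-stable inside $M$ and $\mathfrak{p}_r\subseteq\mathfrak{g}_s$, the $\mathfrak{p}_r$-submodule of $M$ generated by $v$ lies in $M_s$, which is itself locally $\mathfrak{p}_s$-finite.

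The main obstacle is checking that $M$ has finite length with composition factors indexed by $X^{++}$. The decisive observation is that by construction $\tr^{\infty}_r M=\bigoplus_{\lambda\in X_r}M_\lambda=M_r$ as $\mathfrak{g}_r$-modules for every $r\ge r'$. To bound the length, let $\ell=\mathrm{length}(M_{r'})$; given any strictly increasing chain $0=N_0\subsetneq N_1\subsetneq\cdots\subsetneq N_k\subseteq M$, pick a witness $v_i\in N_i\setminus N_{i-1}$ for each $i$ and let $r$ be large enough that all the $v_i$ lie in $M_r$. Then the intersections $N_i\cap M_r$ form a strictly increasing chain of submodules of $M_r$, which forces $k\le\ell$. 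Hence $M$ has length at most $\ell$. Now fix any composition series of $M$ and apply $\tr^{\infty}_r$ for $r$ so large that every composition factor label of $M$ lies in $X_r^{++}$: exactness together with Proposition~\ref{tilt} turns this into a filtration of $M_r$ by modules $L_r(\lambda)$, none of which is zero. Comparing total lengths forces $\mathrm{length}(M)=\ell$, and in particular places every composition factor label of $M$ in $X^{++}$. Finite length implies finite generation, completing the verification that $M\in\catO^{++}$.
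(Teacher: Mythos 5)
Your argument is essentially sound, and its first two thirds coincide with the paper's proof: the stability of composition multiplicities together with exactness of truncation and Proposition~\ref{tilt} forces each $f_r$ to be an isomorphism onto $\tr^{r+1}_r M_{r+1}$, and $\mathfrak{h}$-semisimplicity and local $\mathfrak{p}_r$-finiteness are read off from the truncations. Where you diverge is the finite-length step. The paper obtains a composition series of $M$ directly as a direct limit of compatible composition series of the $M_r$, using exactness of direct limits and $L(\lambda)=\lim_{\longrightarrow}L_r(\lambda)$; this produces the series and identifies its subquotients as $L(\lambda)$ with $\lambda\in X^{++}$ in one stroke. Your chain-intersection bound $\mathrm{length}(M)\le\mathrm{length}(M_{r'})$ is a clean substitute for the existence of a finite composition series, and it sidesteps the compatibility-of-choices issue implicit in taking a limit of composition series.

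However, your last step as written is circular: you choose ``$r$ so large that every composition factor label of $M$ lies in $X_r^{++}$'', which presupposes that the composition factors of $M$ are of the form $L(\lambda)$ with $\lambda\in X^{++}$ --- exactly what is to be proved --- and Proposition~\ref{tilt} cannot be applied to the subquotients $N_i/N_{i-1}$ before that is known. The gap is fixable. Truncate an arbitrary composition series $0=N_0\subset\cdots\subset N_k=M$ (which exists by your length bound) by the exact operation $N\mapsto\bigoplus_{\lambda\in X_r}N_\lambda$ for $r\ge r'$; since this sends $M$ to $M_r$, which has length $\ell\ge k$, each truncated subquotient must be irreducible, say $L_r(\lambda_i)$ with $\lambda_i\in X_r^{++}$, and $k=\ell$. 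You then still need to argue that $N_i/N_{i-1}\cong L(\lambda_i)$: for instance, a vector spanning the one-dimensional $\lambda_i$-weight space of $N_i/N_{i-1}$ is a highest weight vector of its truncation $L_s(\lambda_i)$ for every $s\ge r$, hence is a $\mathfrak{b}$-highest weight vector generating the irreducible module $N_i/N_{i-1}$, which is therefore $L(\lambda_i)$. With that supplement your proof is complete.
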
	

\begin{proof}
	By assumption, $M_r$ and $\tr^{r+1}_r(M_{r+1})$ have the same composition multiplicites when $r\geq r'$. This implies $f_r$ is an isomorphism and $f_r^{-1}\circ \tr^{r+1}_r$ sends a composition series of $M_{r+1}$ to a composition series of $M_r$ with the same ordered sequence of weights. Now since taking direct limits is exact and $L(\lambda)=\displaystyle_{\lim_{\longrightarrow}}~L_r(\lambda)$ for any $\lambda\in X^{++}$ by Proposition~\ref{tilt}, $M$ has a finite composition series with the same ordered sequence of weights as $M_r$ for any $r\geq r'$. This implies $M$ is finitely generated. Truncation to $M_r$ shows that $M$ is $\mf{h}$-semisimple and locally $\mf{p}_r$-finite for any $r$. So $M\in\catO^{++}$.
	\end{proof}

\begin{rmk:deltamultstable}\label{rmk:deltamultstable}
	The same conclusion holds if $M_r\in(\catO^{++}_r)^{\Delta}$ for all $r\geq r'$ and we replace composition multiplicities with $\Delta$-multiplicities. Moreover, in this situation we can conclude that $M\in(\catO^{++})^{\Delta}$.
	\end{rmk:deltamultstable}

\subsection{Highest weight structure}\label{subsec:HWC}

\begin{++hwc}
	If $r<\infty$ then $\mathcal{O}_r^{++}$ is a highest weight category with weight poset $(X_r^{++}, \leq)$, standard objects $\Delta_r(\lambda)$, and costandard objects $ \nabla(\lambda)$.
	\end{++hwc}

\begin{proof}
	The parabolic category $\mathcal{O}_r^+$ is a highest weight category with weight poset $(X_r^+, \leq)$ and standard objects $\left\{ \, \Delta_r(\lambda)\, \middle| \,\lambda \in X_r^+ \, \right\}$ (see e.g. \cite[Theorem 3.8]{BLW13}). Since $X_r^{++}$ is an ideal in $X_r^+$ (an easy generalisation of \cite[Lemma 3.4]{CW07}), the proposition follows from the general theory of highest weight categories.
	\end{proof}
	
\begin{projcover}
	We will write $P_r(\lambda)$ for the projective cover of $L_r(\lambda)$ in $\mathcal{O}_r^{++}$. This will generally be a proper quotient of the projective cover of $L_r(\lambda)$ in the larger categories $\mathcal{O}_r^{+}$ and $\mc{O}_r$.
	\end{projcover}
	
We wish to extend this to $r=\infty$. Most of the necessary ingredients are already in the literature, it only remains to establish that $\mathcal{O}^{++}$ has enough projectives. Our main tool will be the truncation functors from Definition~\ref{def:trunc}. We will show that these send $P_s(\lambda)$ to $P_r(\lambda)$ for $\lambda\in X^{++}_r$ and use them to construct projective covers in $\mathcal{O}^{++}$ direct limits of the $P_r(\lambda)$ as in Lemma~\ref{limitinO}.

We will need a left adjoints $(\tr_r^s)^!$ to the $\tr_r^s$. Let $i_r^!:\catO_r\to\catO_r^{++}$ be the functor that sends a module to its largest quotient in $\catO_r^{++}$. It is left adjoint to the inclusion $i_r:\catO_r^{++}\to\catO_r$. Let $\mathfrak{p}_{r,1}$ be the parabolic subalgebra of $\mathfrak{g}_{r+1}$ corresponding to the Levi subalgebra $\mathfrak{g}_{r,1}:=\mathfrak{g}_r+ \mf{h}_{r+1}$. Take $M\in \cato_r^{++}$ and trivially extend the $\mf{g}_r$-action on M to an action of $\mf{p}_{r,1}$. Then $\mc{U}\mf{g}_{r+1}\otimes_{\mc{U}\mf{p}_{r,1}} M\in\mc{O}_{r+1}$. Define
\begin{equation}
	(\tr_r^{r+1})^!(M)=i_{r+1}^!\left(\mc{U}\mf{g}_{r+1}\otimes_{\mc{U}\mf{p}_{r,1}}M\right).
	\end{equation}

\noindent Now set
\begin{equation}
	(\tr_r^s)^!=(\tr_{s-1}^r)^!\circ\cdots (\tr_r^{r+1})^!:\cato_r^{++}\to\cato_s^{++}
	\end{equation}

\noindent for $r<s<\infty$.

\begin{tradj}
	If $r<s<\infty$ then $(\tr_r^s)^!$ is left adjoint to $\tr_r^s$.
	\end{tradj}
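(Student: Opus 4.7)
The plan is to reduce the general case to the single-step adjunction $(\tr_r^{r+1})^!\dashv\tr_r^{r+1}$ by associativity of adjoint composition, and then prove the single case by chaining three standard adjunctions. Since $(\tr_r^s)^! = (\tr_{s-1}^s)^!\circ\cdots\circ(\tr_r^{r+1})^!$ and $\tr_r^s = \tr_r^{r+1}\circ\cdots\circ\tr_{s-1}^s$, and the composition of left adjoints is left adjoint to the reverse composition of right adjoints, this reduction is immediate.

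For the single step, given $M\in\catO_r^{++}$ and $N\in\catO_{r+1}^{++}$, I would construct natural isomorphisms
\begin{align*}
\Hom_{\catO_{r+1}^{++}}\bigl((\tr_r^{r+1})^!(M),\,N\bigr)
&\cong \Hom_{\catO_{r+1}}\bigl(\mc{U}\mf{g}_{r+1}\otimes_{\mc{U}\mf{p}_{r,1}}M,\,N\bigr)\\
&\cong \Hom_{\mf{p}_{r,1}}(M,N)\\
&\cong \Hom_{\mf{g}_r}\bigl(M,\tr_r^{r+1}(N)\bigr).
\end{align*}
The first isomorphism uses the $i_{r+1}^!\dashv i_{r+1}$ adjunction between $\catO_{r+1}^{++}$ and the ambient $\catO_{r+1}$; the second is Frobenius reciprocity for the parabolic induction $\mc{U}\mf{g}_{r+1}\otimes_{\mc{U}\mf{p}_{r,1}}(-)$; the third is the step requiring care.

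The main obstacle is the third isomorphism. In the forward direction, since the extra Cartan element $h = e_{m+n+r+1,m+n+r+1}$ acts as zero on $M$ by trivial extension, any $\mf{p}_{r,1}$-map $\phi\colon M\to N$ satisfies $h\cdot\phi(m) = \phi(h\cdot m) = 0$, forcing $\phi(M)$ into the $h$-kernel, which is precisely $\tr_r^{r+1}(N)$; restriction gives a $\mf{g}_r$-map. The converse---that a $\mf{g}_r$-map $\phi\colon M\to\tr_r^{r+1}(N)$ extends uniquely to a $\mf{p}_{r,1}$-map---requires checking that the unipotent radical $\mf{u} = \bigoplus_{1\leq i\leq m+n+r}\CC e_{i,m+n+r+1}$ annihilates $\phi(M)$ inside $N$. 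This is the heart of the proof: I would use Proposition~\ref{tilt} to pin down which composition factors $L_{r+1}(\lambda)$ of $N$ actually contribute to $\tr_r^{r+1}(N)$ (namely those with $\lambda\in X_r^{++}$), combined with a positive-root-cone analysis showing that on such factors all weights have non-negative $\delta_{m+n+r+1}$-coefficient, so the weight space that $e_{i,m+n+r+1}\cdot\phi(M)$ would target must vanish. Once this bijection is established, naturality in $M$ and $N$ is automatic and the adjunction follows.
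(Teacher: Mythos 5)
Your proposal is correct and follows essentially the same route as the paper: reduce to $s=r+1$, chain the $i_{r+1}^!\dashv i_{r+1}$ adjunction, Frobenius reciprocity for parabolic induction, and the identification $\Hom_{\mf{p}_{r,1}}(M,N)\cong\Hom_{\mf{g}_r}(M,\tr_r^{r+1}N)$, whose key point is exactly the weight argument you identify (every weight of $N$ has non-negative $\delta_{m+n+r+1}$-coefficient, so the nilradical kills the image of any such map).
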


\begin{proof}
	It suffices to prove this in the case $s=r+1$. Take $M\in\cato_r^{++}$ and $N\in\cato_{r+1}^{++}$ and take a $\mathfrak{g}_{r,1}$-module homomorphism $f:M\to N$. We claim this is a homomorphism of $\mathfrak{p}_{r,1}$-modules. Indeed, since all composition factors of $N$ are of the form $L_{r+1}(\lambda)$ with $\lambda\in X_{r+1}^{++}$, all weights of $N$ must have non-negative $\delta_{m+n+r+1}$-component. As $f$ preserves weight spaces and the weight of any root vector in the nilradical of $\mathfrak{p}_{r,1}\subseteq\mathfrak{g}_{r+1}$ has $\delta_{m+n+r+1}$-component -1, the nilradical must act trivially on $\text{im}~f$and therefore $f$ is a homomorphism of $\mathfrak{p}_{r,1}$-modules. Now we have a chain of isomorphisms
	\begin{align}
		\begin{split}
		\Hom_{\mathfrak{g}_r}(M,\tr^{r+1}_r N)	& = \Hom_{\mathfrak{g}_{r,1}}(M,N) \\
										& = \Hom_{\mathfrak{p}_{r,1}}(M,N) \\
										& \cong \Hom_{\mathfrak{g}_{r+1}}(\mc{U}\mathfrak{g}_{r+1}\otimes_{\mc{U}\mathfrak{p}_{r,1}} M,N) \\
										& = \Hom_{\mathfrak{g}_{r+1}}((\tr_r^{r+1})^! M,N),
		\end{split}
		\end{align}
	
	\noindent where the penultimate isomorphism comes from the usual adjunction between induction and restriction.
	\end{proof}	

\begin{trproj} \label{trproj}
	If $r<s<\infty$ and $\lambda\in X_r^{++}$ then
	\begin{align}
		\tr_r^s \, P_s(\lambda)=P_r(\lambda).
		\end{align}
	\end{trproj}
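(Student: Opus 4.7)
The plan is to work through the left adjoint $(\tr_r^s)^!$ from Lemma~\ref{tradj}. First I reduce to the case $s=r+1$ and identify $(\tr_r^{r+1})^! P_r(\lambda) \cong P_{r+1}(\lambda)$ directly; then I extract the desired identity via a character comparison together with a lifting argument.

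For the first identification, the truncation $\tr_r^{r+1}$ is exact (Definition~\ref{def:trunc}), so its left adjoint $(\tr_r^{r+1})^!$ carries projectives to projectives, and hence $(\tr_r^{r+1})^! P_r(\lambda)$ is projective in $\cato_{r+1}^{++}$. I identify it by computing its top via the adjunction together with Proposition~\ref{tilt}: for $\mu \in X_{r+1}^{++}$,
\[
\Hom_{\cato_{r+1}^{++}}\!\bigl((\tr_r^{r+1})^! P_r(\lambda),\, L_{r+1}(\mu)\bigr) \cong \Hom_{\cato_r^{++}}\!\bigl(P_r(\lambda),\, \tr_r^{r+1} L_{r+1}(\mu)\bigr) = \delta_{\lambda\mu}\CC,
\]
since $\tr_r^{r+1} L_{r+1}(\mu)$ is $L_r(\mu)$ if $\mu \in X_r^{++}$ and zero otherwise. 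Thus the top is $L_{r+1}(\lambda)$, so $(\tr_r^{r+1})^! P_r(\lambda) \cong P_{r+1}(\lambda)$.

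For the character comparison, applying the exact $\tr_r^{r+1}$ to a $\Delta$-flag of $P_{r+1}(\lambda)$ and using Proposition~\ref{tilt} shows that each step $\Delta_{r+1}(\mu)$ becomes $\Delta_r(\mu)$ or vanishes, yielding a $\Delta$-flag of $\tr_r^{r+1} P_{r+1}(\lambda)$ in $\cato_r^{++}$ whose topmost nonzero step is $\Delta_r(\lambda)$. Combining BGG reciprocity in $\cato_{r+1}^{++}$, Proposition~\ref{tilt} applied to $\nabla$ and $L$, and BGG reciprocity in $\cato_r^{++}$ gives
\[
[\tr_r^{r+1} P_{r+1}(\lambda):\Delta_r(\mu)] = [\nabla_{r+1}(\mu):L_{r+1}(\lambda)] = [\nabla_r(\mu):L_r(\lambda)] = [P_r(\lambda):\Delta_r(\mu)]
\]
for every $\mu \in X_r^{++}$, so $\tr_r^{r+1} P_{r+1}(\lambda)$ and $P_r(\lambda)$ have equal characters.

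To finish, the surjection $\tr_r^{r+1} P_{r+1}(\lambda) \twoheadrightarrow \Delta_r(\lambda) \twoheadrightarrow L_r(\lambda)$ coming from the top flag step lifts, by projectivity of $P_r(\lambda)$, to a morphism $f: P_r(\lambda) \to \tr_r^{r+1} P_{r+1}(\lambda)$, and character equality will force $f$ to be an isomorphism once $f$ is known to be surjective. The main obstacle lies precisely in this surjectivity: it is equivalent to showing that $L_r(\lambda)$ is the only simple quotient of $\tr_r^{r+1} P_{r+1}(\lambda)$, and ruling out stray simple quotients that could in principle arise from deeper in the $\Delta$-filtration requires a Nakayama-style argument exploiting the fact that every other flag step $\Delta_r(\mu)$ satisfies $\mu > \lambda$ in the poset order. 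An alternative viewpoint identifies $f$ with the adjunction unit $P_r(\lambda) \to \tr_r^{r+1}(\tr_r^{r+1})^! P_r(\lambda)$ and appeals to the triangle identities, but either way the crux is the same careful control of the top.
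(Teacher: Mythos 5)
Your first half is sound and coincides with the paper's opening step: reducing to $s=r+1$, noting that $(\tr_r^{r+1})^!$ preserves projectives, and computing $\Hom_{\cato_{r+1}^{++}}((\tr_r^{r+1})^!P_r(\lambda),L_{r+1}(\mu))\cong\delta_{\lambda\mu}\CC$ to conclude $(\tr_r^{r+1})^!P_r(\lambda)\cong P_{r+1}(\lambda)$. Your BGG-reciprocity comparison of $\Delta$-multiplicities is also correct (the paper does not argue this way, but it is a legitimate shortcut). The problem is that you stop exactly where the real work begins. Surjectivity of $f:P_r(\lambda)\to\tr_r^{r+1}P_{r+1}(\lambda)$ does not follow from either route you gesture at. A module with a $\Delta$-flag whose top step is $\Delta_r(\lambda)$ and whose lower steps are $\Delta_r(\mu)$ with $\mu>\lambda$ can perfectly well have further simple quotients --- $\Delta_r(\lambda)\oplus\Delta_r(\mu)$ admits exactly such a flag --- so no Nakayama-style argument based only on the poset order of the flag weights can control the head. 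And the triangle identities alone say nothing about whether the unit $M\to\tr_r^{r+1}(\tr_r^{r+1})^!M$ is surjective; for a general adjoint pair it need not be.

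The missing ingredient is precisely that surjectivity, and it comes from the explicit construction of $(\tr_r^{r+1})^!$, not from formal adjunction nonsense: every root vector in the complement of $\mf{p}_{r,1}$ in $\mf{g}_{r+1}$ has $\delta_{m+n+r+1}$-component $1$, so $\tr_r^{r+1}(\mc{U}\mf{g}_{r+1}\otimes_{\mc{U}\mf{p}_{r,1}}M)=M$; since $(\tr_r^{r+1})^!M$ is a quotient of $\mc{U}\mf{g}_{r+1}\otimes_{\mc{U}\mf{p}_{r,1}}M$ and $\tr_r^{r+1}$ is exact, there is a surjection $M\onto\tr_r^{r+1}(\tr_r^{r+1})^!M$. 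This is established in the first claim of the paper's proof (packaged there as $\tr^!\tr\tr^!=\tr^!$). Once you have it, your argument closes at once, and in fact more economically than the paper's: applying it to $M=P_r(\lambda)$ gives a surjection $P_r(\lambda)\onto\tr_r^{r+1}P_{r+1}(\lambda)$, and your equality of $\Delta$-multiplicities forces it to be an isomorphism, bypassing the paper's $\Ext^1$-vanishing argument for projectivity of $\tr_r^{r+1}P_{r+1}(\lambda)$. As written, however, the proof is not complete.
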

	
\begin{proof}
	Without loss of generality assume $s=r+1$. Throughout the proof we write $\tr$ for $\tr^{r+1}_r$ and $\tr^!$ for $(\tr^{r+1}_r)^!$. First we claim $\tr^!\tr\tr^!=\tr^!$. Take $M\in\catO_r^{++}$. The weight of any root vector in the complement to $\mf{p}_{r,1}$ in $\mf{g}_{r+1}$ has $\delta_{m+n+r+1}$-component 1. So
	\begin{equation}
		\tr\left( \mc{U}\mf{g}_{r+1}\otimes_{\mc{U}\mf{p}_{r,1}}M\right)=M.
		\end{equation}
		
	\noindent Hence, since $\tr^!(M)$ is a quotient of $\mc{U}\mf{g}_{r+1}\otimes_{\mc{U}\mf{p}_{r,1}}M$, by exactness of $\tr$ there is a surjection $M\onto\tr\tr^!(M)$. Left adjoints are right exact so this induces a surjection ${\tr^!(M)\onto\tr^!\tr\tr^!(M)}$. But by the counit-unit equations, $\tr^!(M)$ is a direct summand of $\tr^!\tr\tr^!(M)$. Thus they are equal.
	
	Take $M=P_r(\lambda)$. A left adjoint to an exact functor sends projectives to projectives, so $\tr^! P_r(\lambda)$ is projective. If $\mu\in X_{r+1}^{++}$, the multiplicity of $P_{r+1}(\mu)$ as a direct summand of $\tr^! P_r(\lambda)$ equals
	\begin{equation}\label{eq:compmult}
		\dim \Hom_{\mathfrak{g}_{r+1}}( \tr^! P_r(\lambda),L_{r+1}(\mu)) = \dim \Hom_{\mathfrak{g}_r}\left( P_r(\lambda), \tr L_{r+1}(\mu)\right) = \delta_{\lambda \mu}
		\end{equation}
	
	\noindent by Proposition~\ref{tilt}. So $\tr^!P_r(\lambda)=P_{r+1}(\lambda)$. This implies ${\tr^!\tr P_{r+1}(\lambda)=P_{r+1}(\lambda)}$ and so there is an isomorphism of functors on $\catO_{r+1}^{++}$:
	\begin{equation}\label{eq:trff}
		\Hom_{\mf{g}_{r+1}}(P_{r+1}(\lambda),-)\cong \Hom_{\mf{g}_{r+1}}(\tr^!\tr P_{r+1}(\lambda),-)\cong \Hom_{\mf{g}_r}(\tr P_{r+1}(\lambda),\tr(-)).
		\end{equation}
	
	\noindent In particular the functor on the right is exact. On can show that this isomorphism is just the map induced by $\tr$.
	
	Now take $\mu\in X_r^{++}\subseteq X_{r+1}^{++}$. There is a short exact sequence
	\begin{equation}
		\begin{tikzcd}
			0 \ar[r]	&L_{r+1}(\mu) \ar[r]	& \nabla_{r+1}(\mu) \ar[r]	& C \ar[r]		& 0
			\end{tikzcd}
		\end{equation}
		
	\noindent and applying $\tr$ yields
	\begin{equation}
		\begin{tikzcd}
			0 \ar[r]	&L_r(\mu) \ar[r]	& \nabla_r(\mu) \ar[r]	& \tr(C) \ar[r]	& 0.
			\end{tikzcd}
		\end{equation}
	
	\noindent Consider the long exact sequence induced by $\Hom_{\mf{g}_r}(\tr P_{r+1}(\lambda),-)$. By \eqref{eq:trff} the $\Hom$ terms form a short exact sequence and so there is an injection
	\begin{equation}
		\Ext^1_{g_r}(\tr P_{r+1}(\lambda),L_r(\mu))\into\Ext^1_{g_r}(\tr P_{r+1}(\lambda),\nabla_r(\mu)).
		\end{equation}
		
	\noindent But $P_{r+1}(\lambda)$ has a $\Delta$-flag, so $\tr P_{r+1}(\lambda)$ does also, and therefore ${\Ext^1_{g_r}(\tr P_{r+1}(\lambda),\nabla_r(\mu))=0}$. So ${\Ext^1_{g_r}(\tr P_{r+1}(\lambda),L_r(\mu))=0}$ and now induction on length shows ${\Ext^1_{g_r}(\tr P_{r+1}(\lambda),N)=}0$ for any $N\in\catO_r^{++}$. So $\tr P_{r+1}(\lambda)$ is projective. Arguing as in \eqref{eq:compmult} shows ${\tr P_{r+1}(\lambda)=P_r(\lambda)}$.
	\end{proof}
	
\begin{pinftydef}\label{pinftydef}
	Take $\lambda\in X^{++}$. If $s>r\gg 0$ then $\lambda\in X_r^{++}$ so by Proposition \ref{trproj} there is an inclusion of $\mathfrak{g}_r$-modules $P_r(\lambda)=\tr^s_rP_s(\lambda)\into P_s(\lambda)$. Define a $\mf{g}=\mf{g}_{\infty}$-module $P(\lambda)$ by
	\begin{equation}
		P(\lambda)=\lim_{\longrightarrow} \, P_r(\lambda).
		\end{equation}
	\end{pinftydef}
 
 \begin{pinftythm} \label{pinftythm}
 	Take $\lambda\in X^{++}$ and let $r_{\lambda}\in\NN$ be minimal such that $\lambda\in X_{r_\lambda}^{++}$ and $1-r_\lambda\leq (\lambda+\rho_{r_\lambda},\delta_i)\leq r_\lambda$ for all $i$. Then
	\begin{enumerate}[label=(\roman*)]
		\item \label{deltaflag} the $\Delta$-multiplicities of $P_r(\lambda)$ are independent of $r\geq r_{\lambda}$ in the sense of \eqref{eq:multstable}, so $P(\lambda)\in\cato^{++}$ by Remark~\ref{rmk:deltamultstable};
		\item \label{projcover} $P(\lambda)$ is a projective cover of $L(\lambda)$ in $\cato^{++}$.
		\end{enumerate}
	
	\noindent In particular, $\cato^{++}$ has enough projectives.
	\end{pinftythm}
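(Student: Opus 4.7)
My approach is to establish (i) first using the compatibility between truncation and $\Delta$-flags, then to bootstrap (ii) via an adjunction-driven stability of Hom-spaces.

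For (i), Proposition~\ref{trproj} gives $\tr^s_r P_s(\lambda) = P_r(\lambda)$ for $s > r \geq r_\lambda$, and Proposition~\ref{tilt} says that $\tr^s_r$ sends each Verma subquotient $\Delta_s(\mu)$ in a $\Delta$-flag of $P_s(\lambda)$ to either $\Delta_r(\mu)$ (if $\mu \in X_r^{++}$) or $0$. This immediately yields $(P_r(\lambda):\Delta_r(\mu)) = (P_s(\lambda):\Delta_s(\mu))$ for every $\mu \in X_r^{++}$. To obtain the full $\Delta$-multiplicity version of \eqref{eq:multstable}, I must rule out contributions from $\mu \in X_s^{++} \setminus X_r^{++}$. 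BGG reciprocity in the highest weight category $\catO_s^{++}$ gives $(P_s(\lambda):\Delta_s(\mu)) = [\Delta_s(\mu):L_s(\lambda)]$, whose nonvanishing forces $\mu \geq \lambda$ in the partial order on $X^{++}$. The main combinatorial step is then to verify that the condition defining $r_\lambda$ propagates upward under $\geq$: any $\mu \geq \lambda$ in $X^{++}$ already lies in $X_{r_\lambda}^{++}$. This is a direct inspection of the partial-sum inequalities defining $\leq$, combined with the 01-matrix/wedge description of $X^{++}$ that rigidifies the locations of the non-$c_i$ entries once $|\mu|=|\lambda|$ is imposed. With the stability established, Remark~\ref{rmk:deltamultstable} yields $P(\lambda) \in (\catO^{++})^\Delta$.

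For (ii), the surjection $P(\lambda) \onto L(\lambda)$ arises as the direct limit of the projective covers $P_r(\lambda) \onto L_r(\lambda)$; these are compatible because $\tr$ is exact and $\tr L_{r+1}(\lambda) = L_r(\lambda)$ by Proposition~\ref{tilt}, so radicals truncate to radicals. The core ingredient is the Hom-stability
\begin{equation*}
\Hom_{\mf{g}}\bigl(P(\lambda), M\bigr) \;\cong\; \Hom_{\mf{g}_r}\bigl(P_r(\lambda),\tr^\infty_r M\bigr)
\end{equation*}
valid for all $r \geq r_\lambda$ and $M \in \catO^{++}$. The left side equals $\varprojlim_s \Hom_{\mf{g}_s}(P_s(\lambda), \tr^\infty_s M)$, since a $\mf{g}$-map out of $P(\lambda) = \varinjlim_s P_s(\lambda)$ is precisely a compatible family of $\mf{g}_s$-maps on the $P_s(\lambda)$, each of which factors through $\tr^\infty_s M$ on weight grounds. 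The transition maps in this inverse system are isomorphisms: combining Lemma~\ref{tradj} with the identity $(\tr^{r+1}_r)^! P_r(\lambda) = P_{r+1}(\lambda)$ extracted from the proof of Proposition~\ref{trproj}, and using $\tr^{r+1}_r \tr^\infty_{r+1} = \tr^\infty_r$, the restriction map realises the adjunction isomorphism.

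Projectivity of $P(\lambda)$ then follows immediately: both $\tr^\infty_r$ and $\Hom_{\mf{g}_r}(P_r(\lambda),-)$ are exact on $\catO_r^{++}$. Applying the Hom-stability at $M = P(\lambda)$, and noting that $\tr^\infty_r P(\lambda) = P_r(\lambda)$ because truncation commutes with the defining direct limit, gives $\End_{\mf{g}} P(\lambda) \cong \End_{\mf{g}_r} P_r(\lambda)$, a local ring; hence $P(\lambda)$ is indecomposable, and together with the surjection to $L(\lambda)$ this makes $P(\lambda)$ a projective cover. That $\catO^{++}$ has enough projectives then follows by a standard induction-on-length argument, lifting surjections onto successive simple composition factors using these covers. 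The main obstacle in the whole proof is the combinatorial step inside (i); once it is granted, the remainder is formal manipulation of the adjunction $((\tr_r^{r+1})^!,\tr_r^{r+1})$, exactness of truncation, and direct/inverse limits.
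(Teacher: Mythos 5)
Your part (i) follows the paper's route exactly: reduce via Propositions~\ref{trproj} and~\ref{tilt} to showing that every $\mu\in X^{++}$ with $\mu\geq\lambda$ already lies in $X_{r_\lambda}^{++}$. Be aware that this is the genuinely delicate point and "direct inspection" undersells it: the paper proves it by contradiction, invoking \cite[Lemma~3.9]{BLW13} twice to compare the signed counts $\sum_{(\nu+\rho_r,\delta_i)\leq 1-r}(-1)^{p_i}$ for $\nu=\lambda,\mu$ and deriving $1\leq 0$. Your gesture at "rigidifying the non-$c_i$ entries once $|\mu|=|\lambda|$ is imposed" is the right idea, but as written it is an unproven claim, and it is the one place where real work remains.

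For part (ii) you take a genuinely different, and cleaner, route than the paper. The paper fixes a lifting problem $P(\lambda)\to N$, $M\onto N$, produces (possibly incompatible) lifts $h_r:P_r(\lambda)\to M_r$ level by level, and then extracts a compatible family by an inverse-limit argument on finite-dimensional affine subspaces $B_r\subseteq A_r$ of $M_{\mu_1}\times\cdots\times M_{\mu_k}$. You instead observe that $\Hom_{\mf{g}}(P(\lambda),M)=\varprojlim_s\Hom_{\mf{g}_s}(P_s(\lambda),M_s)$ and that the transition (restriction) maps are isomorphisms for $s\geq r_\lambda$, so that $\Hom_{\mf{g}}(P(\lambda),-)\cong\Hom_{\mf{g}_{r}}(P_{r}(\lambda),\tr^\infty_{r}(-))$ is exact. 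This is valid: the needed isomorphism is exactly \eqref{eq:trff} in the paper (adjunction plus $\tr^!\tr P_{r+1}(\lambda)=P_{r+1}(\lambda)$), and one can see directly that restriction is injective because $P_{r+1}(\lambda)$ is generated by its $X_r$-weight spaces (all $\Delta$-flag weights lie in $X_{r_\lambda}$ by part (i)), with surjectivity then following from the dimension count $[M_{r+1}:L_{r+1}(\lambda)]=[M_r:L_r(\lambda)]$. What your approach buys is that compatibility of the lifts is automatic rather than engineered; what it costs is that you must actually verify that the abstract adjunction isomorphism is realised by restriction (or, as above, that the unit $P_r(\lambda)\to\tr\tr^! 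P_r(\lambda)$ is an isomorphism) -- the paper itself only asserts this with "one can show". Your conclusion that $P(\lambda)$ is the projective cover via locality of $\End_{\mf{g}}P(\lambda)\cong\End_{\mf{g}_r}P_r(\lambda)$ is a standard and correct replacement for the paper's direct verification that $\pi$ is superfluous.
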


\begin{Rmk:Chenproj}
	While drafting this paper the author was made aware the Chih-Whi Chen and Ngau Lam in Taiwan have independently proved that $\catO^{++}$ has enough projectives using a different approach. 
	\end{Rmk:Chenproj}

\begin{proof}
	\ref{deltaflag} Take $s>r\geq r_{\lambda}$. Applying $\tr^s_r$ to a $\Delta$-flag of $P_s(\lambda)$ yields a $\Delta$-flag of $P_r(\lambda)$. In light of Proposition~\ref{tilt} it suffices to show that if $\mu\in X_s^{++}$ and $\left( P_s(\lambda):\Delta_s(\mu)\right)\neq 0$ then $\mu\in X_r^{++}$. But if $\left( P_s(\lambda):\Delta_s(\mu)\right)\neq 0$ then $\mu\geq \lambda$. Thus it suffices to show that if $\mu\in X^{++}$ with $\mu\geq\lambda$ then $\mu\in X_{r_{\lambda}}^{++}$.

	Assume $\parity=0$ (the case $\parity=1$ is similar). Take $r$ minimal such that $\mu\in X_r^{++}$. Suppose for contradiction that $r>r_\lambda$. We have $r,r_{\lambda}\in X_r^{++}$ so by \cite[Lemma 3.9]{BLW13},
	
	\begin{equation}\label{eq:localcoideal}
		\sum_{\substack{1\leq i\leq m+n+r\\ (\lambda+\rho_r,\delta_i)\leq 1-r}} (-1)^{p_i} = \sum_{\substack{1\leq i\leq m+n+r\\(\mu+\rho_r,\delta_i)\leq 1-r}} (-1)^{p_i}.
		\end{equation}
	
	\noindent Since $r>r_{\lambda}$, $(\lambda+\rho_r,\delta_i)>1-r$ unless $i=m+n+r$ so the left hand side is equal to $(-1)^{\epsilon}=1$. Similarly, $\mu\in X_r^{++}\setminus X_{r-1}^{++}$ implies $(\mu+\rho_r,\delta_i)>1-r$ for $i>m+n$. So the right hand side of \eqref{eq:localcoideal} doesn't change if we restrict the sum to be over $1\leq i\leq m+n$. But
	\begin{equation}
			\sum_{\substack{1\leq i\leq m+n\\(\mu+\rho_r,\delta_i)\leq 1-r}} (-1)^{p_i} \leq \sum_{\substack{1\leq i\leq m+n\\(\lambda+\rho_r,\delta_i)\leq 1-r}} (-1)^{p_i},
		\end{equation}
		
	\noindent again by \cite[Lemma 3.9]{BLW13} and the right hand side is equal to 0, a contradiction. So $\mu\in X_{r_{\lambda}}^{++}$ and \ref{deltaflag} holds.
	\newline
		
	\noindent \ref{projcover} We claim that $P(\lambda)$ is projective. Take a diagram
	
	\begin{equation} \label{projdiag}
		\begin{tikzcd}
						& P(\lambda) \ar[d, "f"] 	& \\
			M \arrow[r, "g"]	& N \ar[r]				& 0	
			\end{tikzcd}
		\end{equation}
				
	\noindent Without loss of generality assume $f\neq 0$. Let $\mu_1,\ldots ,\mu_k\in X_{r_\lambda}^{++}$ be the ordered sequence of weights in a $\Delta$-flag of $P(\lambda)$ and take weight vectors $v_1,\ldots ,v_k\in P(\lambda)$ such that $v_i$ projects onto the highest weight vector of weight $\mu_i$ in the subquotient $\Delta(\mu_i)$ of $P(\lambda)$. Then $v_1,\ldots, v_k$ lie in $P_r(\lambda)$ for any $r\geq r_{\lambda}$ by part \ref{deltaflag}.
			
	If $r\geq r_\lambda$, then applying $\tr_r^\infty$ to \eqref{projdiag} and using projectivity of $P_r(\lambda)$ yields a commutative diagram:
	\begin{equation}
		\begin{tikzcd}
							& P_r(\lambda) \ar[ld, "h_r"'] \ar[d, "f_r"] 	& \\
			M_r \ar[r, "g_r"']		& N_r \ar[r]						& 0	
			\end{tikzcd}
		\end{equation}			
			
	\noindent where $M_r=\tr^{\infty}_r(M)$, $N_r=\tr^{\infty}_r(N)$, and $f_r$ and $g_r$ denote the restrictions of $f$ and $g$ respectively. Write $\underline{v}=(v_1,\ldots,v_k)$ and let
	\begin{equation}
		A_r=\text{span} \big\{ \, h_s(\underline{v})=(h_s(v_1),\ldots ,h_s(v_k)) \, \big| \, s\geq r \, \big\}\leq M_{\mu_1}\times \cdots \times M_{\mu_n},
		\end{equation}
	
	\noindent a finite-dimensional vector space with
	\begin{equation}
		\cdots \subseteq A_{r+1}\subseteq A_r\subseteq \cdots
		\end{equation}
			
	\noindent If $\underline{w}=(w_1,\ldots,w_k)={\sum_{s\geq r} a_s h_s(\underline{v})} \in A_r$ then $\tilde{h}_r={\sum_{s\geq r} a_s h_s\big|_{P_r(\lambda)}}:P_r(\lambda)\to M_r$ is a $\mf{g}_r$-module map with $\tilde{h}_r(v_i)=w_i$. It is unique with this property. Moreover,
	\begin{equation} \label{gh=f}
		g(\tilde{h}_r(v_i))=\sum_{s\geq r} a_sg(h_s(v_i))=\Big(\sum_{s\geq r} a_s\Big) f(v_i)
		\end{equation}
			
	\noindent for all $i$ so that in particular
	\begin{equation}
		\sum_{s\geq r} a_sh_s(v_i)=\sum_{s\geq r} b_sh_s(v_i)\in A_r \quad \Rightarrow \quad \sum_{s\geq r}a_s=\sum_{s\geq r}b_s,
		\end{equation}
			
	\noindent since $f\neq 0$ implies that there exists an $i$ with $f(v_i)\neq 0$. We wish to find $\underline{w}\in\bigcap_{r\geq r_{\lambda}} A_r$ with $\sum_{s\geq r}a_s=1$ and define $h:P(\lambda)\to M$ by $h\big|_{P_r(\lambda)}=\tilde{h}_r$.
			
	For $r\geq r_\lambda$ let
	\begin{equation}
		B_r= \Big\{ \, \sum_{s\geq r} a_s h_s(\underline{v}) \, \Big| \, \sum_{s\geq r}a_s=0 \, \Big\} \leq A_r.
		\end{equation}
			
	\noindent Since all spaces are finite-dimensional, the short exact sequence of inverse systems
	\begin{equation}
		\begin{tikzcd}
			0 \ar[r]		& B_{r+1} \ar[d] \ar[r]		& A_{r+1} \ar[d] \ar[r]		& \left. A_{r+1}\middle/ B_{r+1}\right. \ar[d] \ar[r]		& 0 \\
			0 \ar[r]		& B_r \ar[r]			& A_r \ar[r]			& \left. A_r\middle/ B_r\right. \ar[r]				& 0
			\end{tikzcd}
		\end{equation}
			
	\noindent induces a short exact sequence of the inverse limits:
	\begin{equation}
		\begin{tikzcd}
			0 \ar[r]	& \displaystyle{\bigcap_{r\geq r_\lambda} B_r} \ar[r]	& \displaystyle{\bigcap_{r\geq r_\lambda} A_r} \ar[r]	& \displaystyle{\lim_{\longleftarrow}} \, \left. A_r \middle/ B_r\right. \ar[r]		& 0.
			\end{tikzcd}
		\end{equation}
			
	\noindent Each $A_r \big/ B_r$ is a non-zero, finite-dimensional vector space, so $\displaystyle{\lim_{\longleftarrow}} \, A_r \big/ B_r \neq 0$ and so $\bigcap B_r \subsetneq \bigcap A_r$. Take $\underline{w}\in \bigcap  A_r \setminus \bigcap B_r$ such that ${\underline{w}=\sum a_s h_s(\underline{v})}$ implies $\sum a_s=1$. The assignment $v_i\mapsto w_i$ induces a well-defined $\mf{g}_r$-module homomorphism $P_r(\lambda)\to M_r$ for any $r\geq r_\lambda$ and so induces a well-defined $\mf{g}$-module homomorphism $h:P(\lambda)\to M$. Morever, $g(h(v_i))=f(v_i)$ for all $i$ by \eqref{gh=f} and so $g\circ h=f$. Thus $P(\lambda)$ is projective.
			
	Now we claim that $P(\lambda)$ is a projective cover of $L(\lambda)$. Indeed, from the $\Delta$-flag of $P(\lambda)$ there is a surjection $P(\lambda)\onto \Delta(\lambda)$ and so there is an epimorphism $\pi:P(\lambda)\onto L(\lambda)$. We claim that $\pi$ is superfluous. Suppose $M\in \cato^{++}$ and $f:M\to P(\lambda)$ with $\pi\circ f$ surjective. Then $\tr^{\infty}_r(\pi\circ f)=\tr^{\infty}_r(\pi)\circ f_r$ is surjective for any $r\geq r_{\lambda}$ and so $f_r$ is surjective since $\tr^{\infty}_r(\pi):P_r(\lambda)\onto L_r(\lambda)$ is a projective cover. But $f=\bigcup f_r$, so $f$ is surjective and thus $\pi$ is superfluous.
	\end{proof}
	
\begin{O++schurcat}
	$\cato^{++}$ is a highest weight category with weight poset $\left( X^{++} \!, \leq \right)$ and standard objects $\left\{ \, \Delta(\lambda) \, \middle| \, \lambda \in X^{++} \, \right\}$.
	\end{O++schurcat}
	
\begin{proof}
	The category $\mathcal{O}^{++}$ is abelian by \cite[\S7.2]{CLW12}, objects have finite length by definition, and $\dim \End(L(\lambda))=1$ for any $\lambda\in X^{++}$. By the theorem, $\cato^{++}$ has enough projectives and applying duality shows it has enough injectives. Thus $\cato^{++}$ is a Schurian category. The poset $\left( X^{++} \!, \leq \right)$ is interval-finite by Lemma 2.4 in \textit{loc. cit.} and the remaining highest weight conditions follow easily from the theorem.
	\end{proof}
	
\subsection{Categorical action}\label{subsec:cataction}

\begin{not:Mr}
	If $M\in\catO^{++}$ and $r\in\NN$ then we will write $M_r:=\tr^{\infty}_r(M)$.
	\end{not:Mr}

\noindent For $r<\infty$ let
\begin{equation}
	\Omega_r:=\sum_{k,l=1}^{m+n+r} (-1)^{p_l}e_{kl}\otimes e_{lk}.
	\end{equation}

\noindent For $M_r\in\mathcal{O}_r$, let $F_rM_r =M_r\otimes U_r$ and $E_rM_r=M_r\otimes U_r^*$. Let $x_r\in \End\left(F_r\right)$ be given by multiplication by $\Omega_r$ and $t_r\in\End\left(F_r^2\right)$ be induced by the map
\begin{equation}
	\begin{aligned}
		U_r\otimes U_r	& \longrightarrow	 U_r\otimes U_r \\
		u_k\otimes u_l	& \longmapsto		 (-1)^{p_kp_l} u_l\otimes u_k.
		\end{aligned}
	\end{equation}
	
\begin{finranktpc}\label{finranktpc}
	\cite[Theorem 3.10]{BLW13} With respect to the above actions, $\mathcal{O}_r$ is an $\mf{sl}_{I_r}$-TPC of type $(\ur,\ud)$.
	\end{finranktpc}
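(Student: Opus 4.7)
The plan is to verify the TPC axioms (CA1)--(CA4), (TPC1), and (TPC2) in turn, following the strategy of \cite[Theorem 3.10]{BLW13}, with care for the super signs introduced in the definitions of $\Omega_r$ and the swap. Since the statement is cited directly from the literature, a full proof essentially consists of transporting those arguments into the super setting.

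For the categorical $\mathfrak{sl}_{I_r}$-action: axiom \ref{CA1} follows by computing $\Omega_r$ on $M\otimes U_r$ for $M=\bigoplus_\nu M_\nu$. Decomposing $\Omega_r$ into a Cartan piece and a piece built from positive/negative root vectors shows that $\Omega_r$ acts on $M_\nu\otimes u_j$ by a scalar of the form $(\nu+\rho_r,\delta_j)$ (up to a global shift) plus a locally nilpotent part, so the generalised eigenvalues of $x_r$ lie in $I_r$ for integral weights. Axiom \ref{CA2} is a direct super-algebra computation: the cross-relations $t_r x_r - x_r' t_r = \pm 1$ come from expanding $\Omega_r$ through the super-swap on two adjacent tensor factors, and the braid relations come from the $S_k$-action on $U_r^{\otimes k}$ defined via the graded swap. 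Axiom \ref{CA3} is immediate: $U_r$ is finite dimensional, so tensoring with $U_r$ and with $U_r^*$ are biadjoint via the evaluation and coevaluation maps, and both preserve $\mathcal{O}_r$ (and $\mathcal{O}_r^{++}$ by a weight count).

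For axiom \ref{TPC1}, the tensor identity yields
\[
\Delta_r(\lambda)\otimes U_r\;\cong\;\mathcal{U}\mathfrak{g}_r\otimes_{\mathcal{U}\mathfrak{p}_r}\bigl(L_r^0(\lambda)\otimes U_r\bigr).
\]
The standard $\mathfrak{p}_r$-filtration of $U_r$ by the subspaces $\operatorname{span}\{u_j,u_{j+1},\dots,u_{m+n+r}\}$ induces a $\mathfrak{p}_r$-filtration of $L_r^0(\lambda)\otimes U_r$ whose subquotients are simple finite-dimensional $\mathfrak{l}_r$-modules of the form $L_r^0(\mu)$; inducing produces a parabolic Verma flag of $\Delta_r(\lambda)\otimes U_r$. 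Projecting to the generalised $i$-eigenspace of $x_r$ then gives $F_i\Delta_r(\lambda)\in(\mathcal{O}_r^{++})^\Delta$, and similarly for $E_i$. For axiom \ref{TPC2}, combinatorially the effect of tensoring with $u_j$ on a weight $\lambda$ is to add $\delta_j$, which in the 01-matrix description amounts to flipping a single entry $\lambda^i_{j'}\neq c_i$ at the row/column determined by the residue class of $(\lambda+\rho_r,\delta_j)$; this reproduces the action of $f_i$ on $v_\lambda$ in the wedge basis exactly as in \cite[Theorem 3.10]{BLW13}, with the super signs absorbed into the definition of $v_\lambda$ in \eqref{eq:vlam}. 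Axiom \ref{CA4} is then automatic from \ref{TPC2} since indecomposable projectives have $\Delta$-flags and hence yield weight vectors on passing to $[\mathcal{C}]$.

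The main obstacle is the careful bookkeeping in \ref{CA2}: the parity factors $(-1)^{p_l}$ in $\Omega_r$ and $(-1)^{p_kp_l}$ in the graded swap are precisely chosen so that one recovers the \emph{undeformed} affine Hecke algebra relations rather than some super deformation. A secondary subtlety is tracking the order of wedge factors in \eqref{eq:vlam} (with reversed order for $\epsilon=1$), which is what makes the translation-functor combinatorics match the action on $\bigwedge^{\underline{r},\underline{\epsilon}}V_r$ on the nose; once these conventions are aligned, the remaining verifications proceed exactly as in the non-super case.
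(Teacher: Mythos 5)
Your outline is correct and takes the same route as the paper: the paper proves nothing here, deferring entirely to \cite[Theorem~3.10]{BLW13} (and noting that the strong $\mathfrak{sl}_2$-categorification axioms were already verified in \cite[Proposition~5.1]{CW07}), and your verification of (CA1)--(CA4), (TPC1), (TPC2) is precisely a summary of that cited argument, including the correct identification of the two genuinely super points (the sign conventions making the degenerate affine Hecke relations undeformed, and the ordering of wedge factors). Nothing further is needed beyond the references.
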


\noindent The fact that the data above defines strong $\mf{sl}_2$-categorifications on $\catO_r$  \`{a} la Chuang-Rouquier \cite{CR04} was checked in \cite[Proposition~5.1]{CW07}.
	
The endomorphism $x_r\in\End\left(F_r\right)$ induces an endomorphism of $E_r$, also denoted $x_r$, given by multiplication by $\left(m-n-(-1)^{\parity}r\right)-\Omega_r$. For $j\in\mathbb{Z}$, let $F_{j,r}$ and $E_{j,r}$ denote the generalized $j$-eigenspaces of $x_r$ on $F_r$ and $E_r$ respectively.

For $\lambda\in X_r$, $j\in\ZZ$, and $1\leq i\leq l+1$, let $t^i_{j}(\lambda)\in X_r$ be obtained from $\lambda$ by applying the transposition $(j~j+1)$ to the $i^{\text{th}}$ row of $\lambda$, considered as a 01-matrix $(\lambda^i_j)$ as described in \S\ref{subsec:Set-up}. Then $F_{j,r}\Delta_r(\lambda)$ has a $\Delta$-flag and
\begin{equation}\label{FDeltamult}
	\left[ F_{j,r}\Delta_r(\lambda)\right] =\sum_i \left[ \Delta_r(t^i_j(\lambda))\right]
	\end{equation}
	
\noindent where the sum is taken over all $1\leq i\leq l+1$ with $\lambda^i_{j}=1$ and $\lambda^i_{j+1}=0$. Similarly $E_{j,r}\Delta_r(\lambda)$ has a $\Delta$-flag and
\begin{equation}\label{EDeltamult}
	\left[ E_{j,r}\Delta_r(\lambda)\right] =\sum_i \left[ \Delta_r(t^i_j(\lambda))\right].
	\end{equation}

\noindent where the sum is taken over all $1\leq i\leq l+1$ with $\lambda^i_{j}=0$ and $\lambda^i_{j+1}=1$. 

If $\lambda\in X_r^{++}$ and $r>\left| j\right|$ then $t^i_j(\lambda)\in X_r^{++}$ so $E_{j,r}$ and $F_{j,r}$  restrict to endofunctors of $\mathcal{O}_r^{++}$.

\begin{stablemult} \label{stablemult}
	Take $M\in \mathcal{O}^{++}$ and $j\in\ZZ$. There exists $r_M>\lvert j\rvert$ such that the composition multiplicities of $E_{j,r}M_r$ and $F_{j,r}M_r$ are independent of $r\geq r_M$ in the sense of \eqref{eq:compmult}. We will always assume that $r_M>\lvert j-m+n\rvert$.
	\end{stablemult}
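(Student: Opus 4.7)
The plan is to compute $[F_{j,r}M_r : L_r(\rho)]$ using the identity $\dim\Hom_{\catO_r^{++}}(P_r(\rho), F_{j,r}M_r) = [F_{j,r}M_r : L_r(\rho)]$, then apply biadjointness of $E_{j,r}$ and $F_{j,r}$ to rewrite it as $\dim\Hom(E_{j,r}P_r(\rho), M_r)$. This reduces the problem to controlling the projective decomposition of $E_{j,r}P_r(\rho)$, which is governed by its $\Delta$-multiplicities via the explicit formula \eqref{EDeltamult} applied to a stable $\Delta$-flag of $P_r(\rho)$ coming from Theorem~\ref{pinftythm}.

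For the projective step: $F_{j,r}$ is exact as a direct summand (via \ref{CA1}) of the exact functor $F_r$, and biadjointness makes $E_{j,r}$ preserve projectives. By Theorem~\ref{pinftythm}\ref{deltaflag}, $P_r(\rho)$ has a finite $\Delta$-flag whose multiplicities are independent of $r \geq r_\rho$. Applying \eqref{EDeltamult} termwise gives $E_{j,r}P_r(\rho)$ a $\Delta$-flag whose multiplicities also stabilize, provided $r$ is large enough that $j, j+1 \in I_r^+$ and each swap $t^i_j(\nu)$ lies in a common $X_{r_0}^{++}$---which is precisely what the hypothesized bounds $r_M > |j|$ and $r_M > |j - m + n|$ ensure (together with $r_M \geq r_\rho$). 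By Krull--Schmidt, write $E_{j,r}P_r(\rho) \cong \bigoplus_\sigma P_r(\sigma)^{\oplus c_\sigma(r)}$. The multiplicities $c_\sigma(r)$ are determined by the linear system
\begin{equation*}
	\left(E_{j,r}P_r(\rho) : \Delta_r(\tau)\right) = \sum_\sigma c_\sigma(r) \, \left(P_r(\sigma) : \Delta_r(\tau)\right),
\end{equation*}
which is upper-triangular with $1$'s on the diagonal (by BGG reciprocity, $(P_r(\sigma):\Delta_r(\tau))$ vanishes unless $\sigma \leq \tau$ and equals $1$ when $\sigma=\tau$). Both sides stabilize for $r$ large---the left by what was just shown, the matrix entries by Theorem~\ref{pinftythm}\ref{deltaflag} applied to each $\sigma$ in the finite support---so back-substitution shows the $c_\sigma(r)$ stabilize as well.

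For the general case, fix $\rho \in X^{++}$. Biadjointness together with the standard formula $\dim\Hom(P_r(\sigma), N) = [N : L_r(\sigma)]$ for any $N$ in the Schurian category $\catO_r^{++}$ yield
\begin{equation*}
	[F_{j,r}M_r : L_r(\rho)] = \dim\Hom(E_{j,r}P_r(\rho), M_r) = \sum_\sigma c_\sigma(r) \, [M_r : L_r(\sigma)].
\end{equation*}
The sum is finite, the $c_\sigma(r)$ stabilize by the previous paragraph, and the composition multiplicities $[M_r : L_r(\sigma)]$ stabilize by $M\in\catO^{++}$ together with Proposition~\ref{tilt}, so the left-hand side stabilizes in the sense of \eqref{eq:multstable}. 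The argument for $E_{j,r}$ is identical, using \eqref{FDeltamult} and the reverse biadjunction. The main obstacle is choosing a single $r_M$ working uniformly in $\rho$; this is routine because the $\rho$ with $[F_{j,r}M_r : L_r(\rho)] \neq 0$ lie in a fixed finite subset of $X^{++}$ for $r$ large (the composition support of the eventually stable $F_{j,r}M_r$), and each contributes only finitely many $\sigma$ at uniformly bounded stability thresholds.
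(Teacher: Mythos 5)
Your proposal is correct and follows essentially the same route as the paper: both use biadjointness to rewrite $[F_{j,r}M_r:L_r(\rho)]$ as $\dim\Hom(E_{j,r}P_r(\rho),M_r)$ and then deduce stability from the $r$-independence of the $\Delta$-flag of $P_r(\rho)$ (Theorem~\ref{pinftythm}\ref{deltaflag}) combined with \eqref{EDeltamult}; the paper merely streamlines by reducing to $M=L(\lambda)$ and invoking that the indecomposable decomposition of $E_{j,r}P_r(\mu)$ is determined by its $\Delta$-multiplicities, where you instead spell out the unitriangular system for the $c_\sigma(r)$. Your final uniformity remark should be justified non-circularly (via \eqref{FDeltamult} applied to the standards covering the composition factors of $M$, not via the "eventually stable" $F_{j,r}M_r$), but that is exactly how the paper chooses $r_M$, so the idea is present.
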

	
\noindent Of course $r_M$ depends on $j$ as well as $M$. But since we rarely vary $j$ we won't record this dependence in our notation.
	
\begin{proof}
	Take $\lambda\in X^{++}$. It suffices to prove the claim for $M=L(\lambda)$. Take $r_M>\lvert j\rvert$ such that $\lambda\in X_{r_M}^{++}$ and if $\mu\in X^{++}$ with $[ \Delta (t^i_j(\lambda)):L(\mu)]\neq 0$ for some $1\leq i\leq l+1$ then $r_M\geq r_{\mu}$ (c.f. Theorem~\ref{pinftythm}).
	
	Take $r\in\NN$ and suppose $\mu\in X_r^{++}$ with $[ F_{r,j}L_r(\lambda):L_r(\mu)]\neq 0$. Since $\Delta_r(\lambda)\onto L_r(\lambda)$, ${[ F_{r,j}\Delta_r(\lambda):L_r(\mu)]\neq 0}$ and so $[ \Delta_r(t^i_j(\lambda)):L_r(\mu)]\neq 0$ for some $1\leq i\leq l+1$ by \eqref{FDeltamult}. This implies $[ \Delta(t^i_j(\lambda)):L(\mu)]\neq 0$ and so $\mu\in X_{r_{\mu}}^{++}\subseteq X_{r_M}^{++}$ by the definition of $r_M$. We have
	\begin{equation*}
		[ F_{r,j}L_r(\lambda):L_r(\mu)] = \dim\Hom_{\mf{g}_r}(P_r(\mu),F_{j,r}L_r(\lambda)) = \dim\Hom_{\mf{g}_r}(E_{j,r}P_r(\mu),L_r(\lambda)),
		\end{equation*}
		
	\noindent which is the multiplicity of $P_r(\lambda)$ as a direct summand of $E_{j,r}P_r(\mu)$. As $r_M\geq r_{\mu}$, Theorem~\ref{pinftythm}\ref{deltaflag} implies that the $\Delta$-multiplicities of $P_r(\mu)$ are independent of $r\geq r_M$. So the same is true of $E_{j,r}P_r(\mu)$ by \eqref{EDeltamult}. The decomposition of $E_{j,r}P_r(\mu)$ into indecomposables is uniquely determined by these multiplicities and so $[ F_{r,j}L_s(\lambda):L_r(\mu)]$ is independent of $r\geq r_M$. The analogous proof works for $E_{j,r}M$.
	\end{proof}

For $r=\infty$ we define $\Omega$, $F$, $F_j$, $x$, and $t$ analogously. If $M\in \catO^{++}$ and $r\in\mathbb{N}$ then $\left(FM\right)_r=F_rM_r$. Moreover, the action of $\Omega$ on $FM$ restricts to the action of $\Omega_r$ on $F_rM_r$ so if $r>\left|j\right|$ then $(F_jM)_r=F_{j,r}M_r$. By the proposition above and Lemma~\ref{limitinO}, $F_jM\in\catO^{++}$. By \eqref{FDeltamult}, only finitely many $F_jM$ are non-zero, and so $FM=\bigoplus_j F_jM\in\catO^{++}$. So $F$ and $F_j$ are well-defined endofunctors of $\catO^{++}$. 

It remains to define a two-sided adjoint $E$ to $F$. In general, $M\otimes U^*\notin \mc{O}^{++}$ for $M\in\mc{O}^{++}$ so the obvious definition won't work. This is because Proposition~\ref{stablemult} doesn't hold if we replace $E_{j,r}M_r$ with $E_rM_r$. Instead we will define each $E_jM$ as a direct limit of the $E_{j,r}M_r$ and then set $EM=\bigoplus_j E_jM$. However, the natural inclusion $E_rM_r\into E_{r+1}M_{r+1}$ doesn't restrict to inclusions $E_{j,r}M_r\into E_{j,r+1}M_{r+1}$. To get around this we will define two sets of maps $E_{j,r}M_r\to \tr^{r+1}_r(E_{j,r+1}M_{r+1})$, leading to functors $E_j^{\texttt{L}}$ and $E_j^{\texttt{R}}$ on $\catO^{++}$ that are naturally left and right adjoint $F_j$ respectively. Finally we will show that $E_j^{\texttt{L}}\cong E_j^{\texttt{R}}$.

\begin{Ejleft}\label{Ejleft}
	Take $j\in \ZZ$ and $M\in\mathcal{O}^{++}$. For $r\in\mathbb{N}$, let $\psi_r$ be the composition of $\mf{g}_r$-module homomorphisms below:
	\begin{equation}
		E_{j,r}M_r\subseteq E_rM_r\subseteq E_{r+1}M_{r+1}\onto E_{j,r+1}M_{r+1}.
		\end{equation}
	
	\noindent Define $E_j^{\texttt{L}}M=\displaystyle{\lim_{\longrightarrow}} \,E_{j,r}M_r$, where the limit is taken over the maps $\psi_r$ above.
	\end{Ejleft}

\begin{EjLinO}\label{EjLinO}
	Take $M\in\catO^{++}$ and $r\geq r_M$ (see Proposition~\ref{stablemult}). Then $\psi_r$ is an injective $\mf{g}_r$-module homomorphism $E_{j,r}M_r\to\tr^{r+1}_r(E_{j,r+1}M_{r+1})$ and so $E_j^{\texttt{L}}M\in\catO^{++}$ by Proposition~\ref{stablemult} and Lemma~\ref{limitinO}.
	\end{EjLinO}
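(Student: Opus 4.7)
My plan splits the claim into three steps: verifying that $\psi_r$ is a $\mf{g}_r$-linear map into the truncation, establishing injectivity, and invoking Lemma~\ref{limitinO}.

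For the first step, each arrow in the definition of $\psi_r$ is manifestly $\mf{g}_r$-linear: the summand inclusion $E_{j,r}M_r \hookrightarrow E_rM_r$ is automatic; the tensor-product inclusion $E_rM_r = M_r\otimes U_r^* \hookrightarrow M_{r+1}\otimes U_{r+1}^* = E_{r+1}M_{r+1}$ is induced by the $\mf{g}_r$-linear inclusions $M_r \subseteq M_{r+1}$ and $U_r^* \subseteq U_{r+1}^*$; and the projection $E_{r+1}M_{r+1}\twoheadrightarrow E_{j,r+1}M_{r+1}$ is even $\mf{g}_{r+1}$-linear because $x_{r+1}$ is a natural transformation of an endofunctor of $\catO_{r+1}$. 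Every $\mf{h}_{r+1}$-weight of $E_rM_r = M_r\otimes U_r^*$ has trivial $\delta_{m+n+r+1}$-component (so lies in $X_r$), and both inclusion and projection preserve weights, so the image of $\psi_r$ lies in $\tr^{r+1}_r(E_{j,r+1}M_{r+1})$.

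Injectivity is the main obstacle. The initial reduction is dimensional: by Proposition~\ref{stablemult} combined with Proposition~\ref{tilt}, both $E_{j,r}M_r$ and $\tr^{r+1}_r(E_{j,r+1}M_{r+1})$ lie in $\catO_r^{++}$ with identical composition multiplicities, hence with equal finite dimensions at every weight, so injectivity of $\psi_r$ is equivalent to surjectivity on each weight space. To prove injectivity directly, I would exploit the identity
\[
x_{r+1}|_{E_rM_r} = x_r - (-1)^{\parity} - (\Omega_{r+1}-\Omega_r)|_{E_rM_r}
\]
inside $E_{r+1}M_{r+1}$, obtained by comparing the explicit formulas for $x_r$ and $x_{r+1}$. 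A direct computation shows that $(\Omega_{r+1}-\Omega_r)$ sends $E_rM_r$ into the summand $M_{r+1}\otimes \CC u_{m+n+r+1}^*$ of $\tr^{r+1}_r(E_{r+1}M_{r+1})$ complementary to $E_rM_r$. For nonzero $v \in E_{j,r}M_r$ with $(x_r - j)^N v = 0$, one should be able to expand $(x_{r+1}-j)^N v$ and, by isolating the term that remains inside $E_rM_r$, force the $x_{r+1}$-generalized $j$-eigencomponent of $v$ to be nonzero. I expect a cleaner route is to reduce first to the case where $M$ has a $\Delta$-flag (via exactness of $E_{j,r}$ and $\tr^{r+1}_r$ and induction on composition length), and then check injectivity on each subquotient $\Delta_r(\mu)$ by comparing $E_{j,r}\Delta_r(\mu)$ with $\tr^{r+1}_r(E_{j,r+1}\Delta_{r+1}(\mu))$ using the explicit $\Delta$-multiplicity formula (\ref{EDeltamult}), which identifies their flags compatibly under truncation provided $r > |j|$.

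Once injectivity is in hand, Proposition~\ref{stablemult} gives exactly the stability hypothesis required by Lemma~\ref{limitinO} for the direct system $(E_{j,r}M_r, \psi_r)_{r \geq r_M}$. The lemma then immediately yields $E_j^{\texttt{L}}M = \displaystyle{\lim_{\longrightarrow}}\, E_{j,r}M_r \in \catO^{++}$, completing the proof.
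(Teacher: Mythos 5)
Your treatment of well-definedness (the image of $\psi_r$ lands in $\tr^{r+1}_r(E_{j,r+1}M_{r+1})$ by weight considerations) and the final appeal to Proposition~\ref{stablemult} and Lemma~\ref{limitinO} are fine, and your first route --- the identity $x_{r+1}|_{E_rM_r}=x_r-(-1)^{\parity}-(\Omega_{r+1}-\Omega_r)|_{E_rM_r}$ together with the observation that $\Omega_{r+1}-\Omega_r$ sends $E_rM_r$ into $M_{r+1}\otimes\CC u_{m+n+r+1}^*$ --- is exactly the paper's starting point. But the decisive step is left at ``one should be able to expand \dots and force \dots'', and that is where the actual content lies. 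The mechanism you are missing is this: because $M_{r+1}\in\catO^{++}_{r+1}$ has no weights with negative $\delta_{m+n+r+1}$-component, the root vectors $e_{k,m+n+r+1}$ with $k<m+n+r+1$ annihilate $M_r$, and a supercommutator computation then gives $\Omega_{r+1}\bigl((\Omega_r-\Omega_{r+1})\alpha\bigr)=(-1)^{\parity}\Omega_{r+1}\alpha$ for $\alpha\in E_rM_r$. By induction on $t$ this shows that $\beta_t:=(d-(-1)^{\parity}-\Omega_{r+1})^t\alpha-(d-\Omega_r)^t\alpha$ lies in $\Ker\,\Omega_{r+1}$ for all $t$, where $d=m-n-(-1)^{\parity}r-j$. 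Since $d-(-1)^{\parity}\neq 0$ --- this is precisely where the hypothesis $r\geq r_M>\lvert j-m+n\rvert$ from Proposition~\ref{stablemult} enters, and you never flag it --- one can solve $(d-(-1)^{\parity}-\Omega_{r+1})^t\beta=\beta_t$ with $\Omega_{r+1}\beta=0$ and conclude $\psi_r(\alpha)=\alpha-\beta$ explicitly; injectivity then follows because $\psi_r(\alpha)=0$ forces $\alpha=\beta$, hence $\Omega_r\alpha=0$, which together with $(d-\Omega_r)^t\alpha=0$ and $\alpha\neq 0$ would force $d=0$, again excluded by the same hypothesis.

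Your proposed ``cleaner route'' via $\Delta$-flags does not close this gap. The multiplicity formula \eqref{EDeltamult} shows that $E_{j,r}\Delta_r(\mu)$ and $\tr^{r+1}_r(E_{j,r+1}\Delta_{r+1}(\mu))$ have the same $\Delta$-multiplicities, but it says nothing about whether the particular map $\psi_r$ between them is injective --- a priori the generalized-eigenspace projection could kill a highest weight vector. Moreover, injectivity on standards does not descend to simples along $\Delta(\mu)\onto L(\mu)$ without further work (one can arrange such a reduction using the equality of composition multiplicities and an induction over the weight poset via the snake lemma, but even then the base case on standards still requires the eigenvalue analysis above). So the explicit computation with $\Omega_{r+1}$ is unavoidable, and as written your proof has a genuine gap at its central step.
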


\begin{proof}	
	Take $\alpha\in E_rM_r$ and $d\in\ZZ$. For $t\in\NN$, define
	\begin{equation}
		\beta_t:=(d-(-1)^{\parity}-\Omega_{r+1})^t\alpha-(d-\Omega_r)^t\alpha.
		\end{equation}
			
	\noindent We claim $\Omega_{r+1}\beta_t=0$. Proceed by induction on $t$. First take $t=1$. We have
	\begin{equation}
		\beta_1=\Omega_r\alpha-\Omega_{r+1}\alpha -(-1)^{\parity}\alpha
		\end{equation}
	
	\noindent Let $z:=m+n+r+1$. Write $\alpha=\sum_{i=1}^{z-1}m_i\otimes u_i^*$ with $m_i\in M_r$. By direct computation,
	\begin{align}\label{eq:betainduction1}
		\begin{split}
			\Omega_{r+1}\alpha	& =-\sum_{k=1}^z\left(\sum_{i=1}^{z-1} (-1)^{p_i} e_{ki}m_i\right)\otimes u_k^* \\
							& = \Omega_r\alpha-\left(\sum_{i=1}^{z-1} (-1)^{p_i} e_{zi}m_i\right)\otimes u_z^*,
			\end{split}
		\end{align}
		
	\noindent so
	\begin{equation}\label{eq:betainduction2}
		\Omega_{r+1}(\Omega_r\alpha-\Omega_{r+1}\alpha)=-\sum_{k=1}^z\sum_{i=1}^{z-1}(-1)^{p_i+\parity}e_{kz}e_{zi}m_i\otimes u_k^*.
		\end{equation}
	
	\noindent By the definition of the supercommutator bracket,
	\begin{align}\label{eq:supercomm}
		\begin{split}
			e_{kz}e_{zi}m_i	& = (-1)^{(p_i+\parity)(p_k+\parity)}e_{zi}e_{kz}m_i+\left[ e_{kz},e_{zi}\right]m_i \\
						& = (-1)^{(p_i+\parity)(p_k+\parity)}e_{zi}e_{kz}m_i+e_{ki}m_i-(-1)^{p_i+\parity}\delta_{ik}e_{zz}m_i.
			\end{split}
		\end{align}
	
	\noindent If $k<z$ then applying $e_{kz}$ to a weight vector in $M_r$ yields an element of $M_{r+1}$ whose weight has $\delta_z$-component -1. Since $M_{r+1}\in\catO_{r+1}^{++}$, this weight space is zero. So $e_{kz}m_i=0$. By weight considerations, $e_{zz}m_i=0$ also. Therefore \eqref{eq:betainduction2} equals
	\begin{equation}
		-\sum_{k=1}^z\sum_{i=1}^{z-1}(-1)^{p_i+\parity} e_{ki}m_i\otimes u_k^*=(-1)^{\parity} \Omega_{r+1}\alpha.
		\end{equation}
		
	\noindent The claim follows.
	
	Now suppose $\Omega_{r+1}\beta_t=0$ for some $t\in\NN$. Let
	\begin{equation}
		\beta_t':=\left( d-(-1)^{\parity}-\Omega_{r+1}\right)^t(d-\Omega_r)\alpha-\left(d-\Omega_r\right)^{t+1}\alpha.
		\end{equation}
	
	\noindent Note that $\Omega_{r+1}\beta_t'=0$ by the inductive hypothesis applied to $(d-\Omega_r)\alpha\in E_rM_r$. But
	\begin{align}
		\begin{split}
			(d-(-1)^{\parity}-\Omega_{r+1})^{t+1}\alpha	& = \left(d-(-1)^{\parity}-\Omega_{r+1}\right)^t\left(d-(-1)^{\parity}-\Omega_{r+1}\right)\alpha \\
												& = \left(d-(-1)^{\parity}-\Omega_{r+1}\right)^t\left((d-\Omega_{r})\alpha+\beta_1\right) \\
												& = (d-\Omega_r)^{t+1}\alpha+\left( \beta_t'+(d-(-1)^{\parity})^t\beta_1\right).
			\end{split}
		\end{align}
	
	\noindent So $\beta_{t+1}=\beta_t'+(d-(-1)^{\parity})^t\beta_1\in\Ker~\Omega_{r+1}$.
	
	Now take $\alpha\in E_{j,r}M_r$. The module $E_{j,r}M_r$ is defined to be the generalized $j$-eigenspace of ${(m-n-(-1)^{\parity}r)-\Omega_r}$, so there exists $t\in\NN$ such that
	\begin{equation}\label{eq:alphainespace}
		(d-\Omega_r)^t\alpha=0,
		\end{equation}
	
	\noindent where $d=m-n-(-1)^{\parity}r-j$. So
	\begin{equation}
		\beta_t=(d-(-1)^{\parity}-\Omega_{r+1})^t\alpha\in\Ker~\Omega_{r+1}.
		\end{equation}
	
	\noindent Define $\beta$ by the equation $(d-(-1)^{\parity}-\Omega_{r+1})^t\beta=\beta_t$. Then $\Omega_{r+1}\beta=0$ and so
	\begin{equation}
		\left( d-(-1)^{\parity}-\Omega_{r+1}\right)^t(\alpha-\beta)=\beta_t-(d-(-1)^{\parity})^t\beta=0.
		\end{equation}
		
	\noindent So $\alpha =(\alpha-\beta)+\beta$ is a decomposition of $\alpha\in E_rM_r\subseteq E_{r+1}M_{r+1}$ into generalized $(d-(-1)^{\parity}-\Omega_{r+1})$-eigenvectors and $\alpha-\beta\in E_{j,r+1}M_{r+1}$. So $\psi_r(\alpha)=\alpha-\beta$. 
	
	Suppose $\psi_r(\alpha)=0$. Then $\alpha=\beta\in\Ker~\Omega_{r+1}$ so \eqref{eq:betainduction1} implies ${\sum_{i=1}^{z-1} (-1)^{p_i} e_{ki}m_i=0}$ for each $k$ and thus $\Omega_r\alpha=0$. Generalized eigenspaces with different eigenvalues intersect trivially, so if $\alpha\neq 0$ then \eqref{eq:alphainespace} implies $d=0$, so $j=m-n-(-1)^{\parity}r$. But this contradicts the assumption $r\geq r_M>\lvert j-m+n\rvert$ made in Proposition~\ref{stablemult}. So $\psi_r$ is injective.
	\end{proof}

Now we define the right adjoint $E_j^{\texttt{R}}$ to $F_j$. Take $M\in\catO^{++}$, $r\geq r_M$, and an element ${x\in\tr^{r+1}_r(E_{j,r+1}M_{r+1})}$. We can write
\begin{equation}
	x=\sum_{i=1}^{m+n+r+1} m_i\otimes u_i^*
	\end{equation}
	
\noindent where $m_i\in M_r$ for $1\leq i\leq m+n+r+1$ and $m_{m+n+r+1}\in M_{r+1}$ is a sum of weight vectors whose weights have $\delta_{m+n+r+1}$-component 1. Define
\begin{equation}\label{phidef}
	\begin{aligned}
		\phi_r:\tr^{r+1}_r(E_{j,r+1}M_{r+1}) & 	\longrightarrow E_rM_r \\
		\sum_{i=1}^{m+n+r+1} m_i\otimes u_i^* &		\longmapsto \sum_{i=1}^{m+n+r} m_i\otimes u_i^*
		\end{aligned}
	\end{equation}

\noindent This is a homomorphism of $\mathfrak{g}_r$-modules.

\begin{phi11}\label{phi11}
	Take $M\in\catO$ and $r\geq r_M$. Then 
	\begin{enumerate}[label=(\roman*)]
		\item \label{phi11image}$\text{im}~\phi_r\subseteq E_{j,r}M_r$;
		\item \label{phi11inj}$\phi_r$ is injective and so is an isomorphism by Proposition \ref{stablemult}.
		\end{enumerate}
	\end{phi11}

\begin{Ejright}
	Let $E_j^{\texttt{R}}M=\displaystyle{\lim_{\longrightarrow}}~E_{j,r}M_r$, where the direct limit is taken along the maps
	\begin{equation}
		E_{j,r}M_r \stackrel{\phi_r^{-1}}{\longrightarrow} \tr^{r+1}_r(E_{j,r+1}M_{r+1})\subseteq E_{j,r+1}M_{r+1}
		\end{equation}
	
	\noindent for $r\geq r_M$. We have $E_j^{\texttt{R}}M\in\catO^{++}$ by Proposition~\ref{stablemult} and Lemma~\ref{limitinO}.
	\end{Ejright}

\begin{proof}[Proof of Lemma~\ref{phi11}]
	Take $x\in\tr^{r+1}_r(E_{j,r+1}M_{r+1})$ and write $x=\sum_{i=1}^z m_i\otimes u_i^*$, where $z=m+n+r+1$. Fix $d\in\ZZ$. For $t\geq 0$, write
	\begin{equation}\label{eq:mkt}
		\left(d-(-1)^{\parity}-\Omega_{r+1}\right)^tx=\sum_{k=1}^z m_{kt}\otimes u_k^*
		\end{equation}
	
	\noindent so that $m_{k0}=m_k$ for $1\leq k\leq z$. The map $\Omega_{r+1}$ preserves $E_{j,r+1}M_{r+1}$ and preserves weights, so \eqref{eq:mkt} lies in $\tr^{r+1}_r(E_{j,r+1}M_{r+1})$.
	
	We claim that 
	\begin{equation}\label{eq:mktclaim}
		m_{k,t+1}-e_{kz}m_{z,t+1}=(d-(-1)^{\parity})\left(m_{kt}-e_{kz}m_{zt}\right)
		\end{equation}
		
	\noindent for all $1\leq k\leq z$ and $t\geq 0$. By direct computation,
	\begin{equation}\label{eq:mktrecursion}
		m_{k,t+1}=(d-(-1)^{\parity})m_{kt}+\sum_{i=1}^z(-1)^{p_i}e_{ki}m_{it}.
		\end{equation}
	
	\noindent So
	\begin{align}
		\begin{split}
			m_{k,t+1}-e_{kz}m_{z,t+1}	=	&   \,(d-(-1)^{\parity})m_{kt}+\sum_{i=1}^z(-1)^{p_i}e_{ki}m_{it} \\
									& - (d-(-1)^{\parity})e_{kz}m_{zt}+\sum_{i=1}^z(-1)^{p_i}e_{kz}e_{zi}m_{it}.
			\end{split}
		\end{align}
	
	\noindent But $e_{kz}e_{zz}m_{zt}=e_{kz}m_{zt}$ by the weight of $m_{zt}$, and if $1\leq i\leq z-1$ then $e_{kz}e_{zi}m_{it}=e_{ki}m_{it}$ as in \eqref{eq:supercomm}. The claim follows.
	
	Set $d=m-n-(-1)^{\parity}r-j$. Since $x\in E_{j,r+1}M_{r+1}$, we have ${\left( d-(-1)^{\parity}-\Omega_{r+1}\right)^tx=0}$ for $t\gg 0$ and so $m_{kt}=0$ for $1\leq k\leq z$ and $t\gg0$. As $r\geq r_M$, the assumption $r_M>\lvert j-m+n\rvert$ in Proposition~\ref{stablemult} means $d-(-1)^{\parity}\neq0$ and so \eqref{eq:mktclaim} implies that $e_{kz}m_{zt}=m_{kt}$ for all $k$ and $t$. Now \eqref{eq:mktrecursion} simplifies to
	\begin{equation}\label{eq:newrecursion}
		m_{k,t+1}=dm_{k,t}+\sum_{i=1}^{z-1}e_{ki}m_{i,t}.
		\end{equation}
	
	\noindent But for $1\leq k\leq z-1$, this is the same recursive formula as for the terms of $(d-\Omega_r)^t\phi_r(x)$. So
	\begin{equation}
		\phi_r\left( (d-(-1)^{\parity}-\Omega_{r+1})^rx\right) = (d-\Omega)^t\phi_r(x)
		\end{equation}
	
	\noindent In particular this implies that $(d-\Omega)^t\phi_r(x)=0$ for $t\gg 0$, so $\phi_r(x)\in E_{j,r}M_r$ and \ref{phi11image} holds.
	
	Now suppose $\phi_r(x)=0$. Then $m_i=0$ for $1\leq i\leq z-1$. Equation \eqref{eq:newrecursion} shows that $m_{kt}=0$ for $1\leq k\leq z-1$ and $m_{zt}=d^tm_z$ for all $t\geq 0$. But $m_{zt}=0$ for $t\gg 0$ and $d\neq0$ so this implies $m_z=m_{z0}=0$, and therefore $x=0$.
	\end{proof}

\begin{Fjadj}
	Take $j\in\mathbb{Z}$. Then
	
	\begin{enumerate}[label=(\roman*)]
		\item\label{Ladj} $E_j^{\texttt{L}}$ is left-adjoint to $F_j$;
		\item\label{Radj} $E_j^{\texttt{R}}$ is right-adjoint to $F_j$;
		\item\label{L=R} $E_j^{\texttt{L}}\cong E_j^{\texttt{R}}$
		\end{enumerate}
	\end{Fjadj}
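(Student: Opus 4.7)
The plan is to bootstrap all three parts from the finite-rank biadjunction $E_{j,r} \dashv F_{j,r} \dashv E_{j,r}$ on $\catO_r^{++}$ (part of the $\mf{sl}_{I_r}$-TPC structure by Theorem~\ref{finranktpc}), together with the stability results Proposition~\ref{stablemult}, Lemma~\ref{limitinO}, Lemma~\ref{EjLinO}, and Lemma~\ref{phi11}.

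For \ref{Ladj} I would produce the adjunction isomorphism via
\begin{align*}
\mathrm{Hom}_{\mf{g}}(E_j^{\texttt{L}}M, N)
& \cong \varprojlim_r \mathrm{Hom}_{\mf{g}_r}(E_{j,r}M_r, N_r) \\
& \cong \varprojlim_r \mathrm{Hom}_{\mf{g}_r}(M_r, F_{j,r}N_r) \\
& \cong \mathrm{Hom}_{\mf{g}}(M, F_jN),
\end{align*}
using $E_j^{\texttt{L}}M = \varinjlim E_{j,r}M_r$ and $M = \varinjlim M_r$, the fact that any $\mf{g}_r$-homomorphism from a module in $\catO_r^{++}$ factors through the $X_r$-weight part of its target by $\mf{h}_r$-weight preservation, and the identification $F_{j,r}N_r = (F_jN)_r$ for $r$ large noted after the definition of $F_j$ on $\catO^{++}$. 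The middle step is the finite-rank adjunction. The essential verification is that these isomorphisms intertwine the transition maps---precomposition with $\psi_r$ on the $E$-side and restriction along $M_r \hookrightarrow M_{r+1}$ on the $F$-side---which is built into the definition of $\psi_r$: its constituent inclusion $E_rM_r \hookrightarrow E_{r+1}M_{r+1}$ is adjoint under $E_r \dashv F_r$ to the natural projection on the $F$-side.

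Part \ref{Radj} is parallel, using the chain
\begin{align*}
\mathrm{Hom}_{\mf{g}}(M, E_j^{\texttt{R}}N)
& \cong \varprojlim_r \mathrm{Hom}_{\mf{g}_r}(M_r, E_{j,r}N_r) \\
& \cong \varprojlim_r \mathrm{Hom}_{\mf{g}_r}(F_{j,r}M_r, N_r) \\
& \cong \mathrm{Hom}_{\mf{g}}(F_jM, N),
\end{align*}
with $(E_j^{\texttt{R}}N)_r = E_{j,r}N_r$ and $(F_jM)_r = F_{j,r}M_r$, and the finite-rank right adjunction in the middle. Compatibility with transitions now follows from the defining property of $\phi_r$: dropping the last coordinate is adjoint under $F_{r+1} \dashv E_{r+1}$ to the inclusion $U_r \hookrightarrow U_{r+1}$.

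For \ref{L=R}, Lemmas~\ref{EjLinO} and \ref{phi11} combined with Proposition~\ref{stablemult} force both $\psi_r$ and $\phi_r^{-1}$ to be isomorphisms $E_{j,r}M_r \isoto \mathrm{tr}^{r+1}_r(E_{j,r+1}M_{r+1})$ for $r \geq r_M$: each is injective by the cited lemmas, and an injection between finite-length modules of matching composition multiplicities is an isomorphism. In particular both $E_j^{\texttt{L}}M$ and $E_j^{\texttt{R}}M$ truncate to $E_{j,r}M_r$ for $r$ large. I would construct the natural isomorphism $E_j^{\texttt{L}} \cong E_j^{\texttt{R}}$ by assembling the finite-rank biadjunction data level-wise into a morphism of direct systems $(E_{j,r}M_r, \psi_r) \to (E_{j,r}M_r, \phi_r^{-1})$ whose commutation with transition maps is precisely the biadjointness of $E_{j,r}$ and $F_{j,r}$; being level-wise an isomorphism, it induces the desired iso on direct limits. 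The main obstacle throughout is the careful bookkeeping needed to verify the compatibility of the finite-rank (bi)adjunction isomorphisms with the transition maps $\psi_r$ and $\phi_r^{-1}$; the need for two distinct transition systems (flagged in the introduction as an unexpected difficulty) arises precisely because no single choice simultaneously realizes both the left and right adjunction structures in the direct limit, and \ref{L=R} is then the a posteriori statement that the two resulting functors nevertheless coincide up to canonical isomorphism.
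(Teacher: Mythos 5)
Your parts \ref{Ladj} and \ref{Radj} follow the paper's proof essentially verbatim: the paper writes the same chain of isomorphisms (in the opposite direction for \ref{Ladj}) and likewise disposes of compatibility with the connecting maps by ``unravelling,'' so there is nothing to object to there.

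Part \ref{L=R} is where the real content lies, and your argument for it has a genuine gap. You propose to build a morphism of direct systems $(E_{j,r}M_r,\psi_r)\to(E_{j,r}M_r,\phi_r^{-1})$ by ``assembling the finite-rank biadjunction data level-wise,'' asserting that commutation with the transition maps ``is precisely the biadjointness of $E_{j,r}$ and $F_{j,r}$.'' This cannot work as stated. A functor's left and right adjoints are not canonically isomorphic, and biadjointness at a fixed rank $r$ is a statement internal to $\catO_r^{++}$; it says nothing about the relation between $\psi_r$ and $\phi_r$, which are maps between ranks $r$ and $r+1$ defined by entirely different formulas (projection onto a generalized $\Omega_{r+1}$-eigenspace versus deletion of the last coordinate of $U_{r+1}^*$). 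Moreover you never actually name the level-wise map, and the identity does not work: $\psi_r\neq\phi_r^{-1}$ in general, which is exactly why the paper needs two transition systems in the first place. The paper's proof takes the level-wise map to be multiplication by $\Omega_r$ on $E_{j,r}M_r$ --- invertible there because $\Omega_r$ acts with the single generalized eigenvalue $m-n-(-1)^{\parity}r-j$, nonzero for $r\geq r_M$ --- and then verifies the square $\phi_r\circ\Omega_{r+1}\circ\psi_r=\Omega_r$ by an explicit computation: $\psi_r(\alpha)=\alpha-\beta$ with $\beta\in\Ker\,\Omega_{r+1}$ (from the proof of Lemma~\ref{EjLinO}) gives $\Omega_{r+1}\psi_r(\alpha)=\Omega_{r+1}\alpha$, and \eqref{eq:betainduction1} gives $\phi_r(\Omega_{r+1}\alpha)=\Omega_r\alpha$. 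Your correct observation that $E_j^{\texttt{L}}M$ and $E_j^{\texttt{R}}M$ have the same truncations $E_{j,r}M_r$ does not by itself produce a natural isomorphism; you must exhibit compatible level-wise isomorphisms, and identifying and checking those is the entire point of part \ref{L=R}.
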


\begin{proof}
	\ref{Ladj} If $M,N\in\catO^{++}$ then
	\begin{align}
		\begin{split}
			\Hom_{\mf{g}}( M,F_jN)	& = \Hom_{\mf{g}}( \lim_{\longrightarrow}M_r,F_jN) \\
								& = \lim_{\longleftarrow} \Hom_{\mf{g}_r}( M_r,F_{j,r}N_r) \\
								& = \lim_{\longleftarrow} \Hom_{\mf{g}_r}( E_{j,r}M_r,N_r) \\
								& =\Hom_{\mf{g}}(\lim_{\longrightarrow} E_{j,r}M_r,N) \\
			\end{split}
		\end{align}
	
	\noindent Unravelling these isomorphisms shows that the connecting maps in the final direct limit $\displaystyle{\lim_{\longrightarrow}}~E_{j,r}M_r$ are just the $\psi_r$ from Definition~\ref{Ejleft}. The result follows.
	\newline
	
	\noindent\ref{Radj} Similar.
	\newline
	
	\noindent\ref{L=R} Take $M\in\catO^{++}$ and $r\geq r_M$. Multiplication by $\Omega_r$ is a $\mf{g_r}$-module isomorphism when restricted to $E_{j,r}M_r$, so it suffices to show that the following diagram commutes:
	\begin{equation}
		\begin{tikzcd}
			E_{j,r}M_r \ar[d, "\psi_r"] \ar[r, "\Omega_r"]		& E_{j,r}M_r \ar[d, leftarrow, "\phi_r"] \\
			\tr^{r+1}_r(E_{j,r+1}M_{r+1}) \ar[r, "\Omega_{r+1}"]	& \tr^{r+1}_r(E_{j,r+1}M_{r+1})
			\end{tikzcd}
		\end{equation}
		
	Take $\alpha\in E_{j,r}M_r$. From the proof of Lemma~\ref{EjLinO}, $\psi_r(\alpha)=\alpha-\beta$ for some ${\beta\in \Ker~\Omega_{r+1}}$. So $\Omega_{r+1}\psi_r(\alpha)=\Omega_{r+1}\alpha$. Equation \eqref{eq:betainduction1} implies $\phi_r(\Omega_{r+1}\alpha)=\Omega_r\alpha$, so $\phi_r(\Omega_{r+1}\psi_r(\alpha))=\Omega_r\alpha$ as required.
	\end{proof}

Write $E_j=E_j^{\texttt{R}}$ and let $E=\bigoplus_j E_j$. If $M\in\catO^{++}$ then only finitely many $E_jM$ are non-zero by $\eqref{EDeltamult}$, so $EM\in\catO^{++}$. The biadjunctions between the $F_j$ and $E_j$ induce a biadjunction between $F$ and $E$.

\begin{infranktpc}\label{infranktpc}
	With the definitions of $E$, $F$, $x$, and $t$ above, $\mathcal{O}^{++}$ is an $\mf{sl}_{\ZZ}$-TPC of type $(\uinfty,\ud)$.
	\end{infranktpc}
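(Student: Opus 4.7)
The plan is to verify each axiom in Definitions~\ref{CategoricalAction} and~\ref{TPCdef} for $\catO^{++}$ by reducing it to the corresponding statement on the finite-rank TPCs $\catO_r$ from Theorem~\ref{finranktpc}. The key compatibilities are already available: for $M\in\catO^{++}$ we have $M=\lim_{\rightarrow}M_r$, $(F_jM)_r=F_{j,r}M_r$ for $r>|j|$ (with $\Omega$ restricting to $\Omega_r$ as noted in the paragraphs after Proposition~\ref{stablemult}), and $(E_jM)_r\cong E_{j,r}M_r$ for $r\geq r_M$ by Lemmas~\ref{EjLinO} and~\ref{phi11}. These compatibilities let us transport relations and adjunctions from the $\catO_r$ to $\catO^{++}$.

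\textbf{Categorical action axioms.} (CA1) is immediate from the definitions $FM=\bigoplus_j F_jM$ and $EM=\bigoplus_j E_jM$. For (CA2), fix $k\geq 0$; the natural transformations $x_j=F^{k-j}xF^{j-1}$ and $t_j=F^{k-j-1}tF^{j-1}$ restrict on each $F_r^kM_r$ to $x_{j,r}$ and $t_{j,r}$, which satisfy the affine Hecke relations by Theorem~\ref{finranktpc}. Since any $\phi\in\End_{\mf g}(F^kM)$ is determined by its restrictions to the exhaustive filtration $F^kM=\bigcup_r F_r^kM_r$, and naturality is likewise checked levelwise, the relations lift to $\End(F^k)$. (CA3) is Proposition~\ref{Fjadj}: $E_j=E_j^{\texttt R}\cong E_j^{\texttt L}$ is left adjoint to $F_j$, hence $F_j$ is a right adjoint of $E_j$. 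Finally (CA4) will follow from (TPC2) together with the inclusion $[\catO^{++}]\hookrightarrow[(\catO^{++})^\Delta]$, as remarked after Definition~\ref{TPCdef}.

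\textbf{TPC axioms.} The functors $F_j$ and $E_j$ are exact on $\catO^{++}$, as direct limits of the exact finite-rank functors $F_{j,r}, E_{j,r}$ along injective connecting maps. For (TPC1), formulas~\eqref{FDeltamult} and~\eqref{EDeltamult} give $\Delta$-flags for $F_{j,r}\Delta_r(\lambda)$ and $E_{j,r}\Delta_r(\lambda)$ with $\Delta$-multiplicities stable in $r$; Remark~\ref{rmk:deltamultstable} then places $F_j\Delta(\lambda),\,E_j\Delta(\lambda)\in(\catO^{++})^\Delta$, and exactness of $F_j,E_j$ promotes this to preservation of all of $(\catO^{++})^\Delta$. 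For (TPC2), the assignment $[\Delta(\lambda)]\mapsto v_\lambda$ is a linear isomorphism $[(\catO^{++})^\Delta]\to\bigwedge^{\uinfty,\ud}V$ because both sides have natural bases indexed by $X^{++}=\Xi_{\uinfty,\ud}$; that it intertwines the endomorphisms induced by $E_j, F_j$ with the Chevalley generators $e_j, f_j$ is exactly the content of~\eqref{FDeltamult} and~\eqref{EDeltamult}, which match the standard action on the wedge basis.

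\textbf{Main obstacle.} The theorem itself is essentially bookkeeping; the genuine difficulty has already been dealt with upstream. The only real subtlety was the construction of $E_j$: because the finite-rank functors $E_r$ are not compatible with truncation, one was forced to introduce two different families of connecting maps and the separate functors $E_j^{\texttt L}, E_j^{\texttt R}$, proved to coincide in Proposition~\ref{Fjadj}\ref{L=R}. With that machinery and the stability of composition and $\Delta$-multiplicities (Proposition~\ref{stablemult} and Theorem~\ref{pinftythm}\ref{deltaflag}) in place, the axioms are verified by assembling results already established in this section.
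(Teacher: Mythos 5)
Your proposal is correct and follows the same route as the paper: the paper's own proof is a terse version of exactly this argument, citing truncation and Theorem~\ref{finranktpc} for \ref{CA2}, Proposition~\ref{Fjadj} for the biadjunction, equations \eqref{FDeltamult}--\eqref{EDeltamult} together with the identification of $X^{++}$ with $\Xi_{\uinfty,\ud}$ for \ref{TPC1} and \ref{TPC2}, and deducing \ref{CA4} from \ref{TPC2}. Your expanded justifications (checking the Hecke relations levelwise along the exhaustive filtration, and using Remark~\ref{rmk:deltamultstable} plus exactness for \ref{TPC1}) are accurate fillings-in of the details the paper leaves implicit.
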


\begin{proof}
	\ref{CA1} and \ref{CA3} are clear. \ref{CA2} follows from truncation and Theorem~\ref{finranktpc}. \ref{TPC1} and \ref{TPC2} are consequences of \eqref{FDeltamult} and \eqref{EDeltamult} and the identification of $X^{++}$ with $\Xi_{\uinfty,\ud}$ described in \S~\ref{subsec:Set-up}. \ref{CA4} is a consequence of \ref{TPC2}.
	\end{proof}
	
Recall that the category $\catO^{++}$ depends on $\parity\in\{0,1\}$. We reintroduce the $\parity$-dependence and write $\mathcal{O}^{++}$ as $\catO^{++}_{\parity}$. A less general formulation of the following was conjectured in \cite[Conjecture~6.10]{CWZ06}. It was generalized in \cite[Conjecture~4.18]{CW07} and first proved in \cite[Theorem~5.1]{CL09}.

\begin{superduality}\label{superduality}
	There is a strongly equivariant equivalence
	\begin{align}
		\mathbb{S}:\mathcal{O}_0^{++}\longrightarrow \mathcal{O}_1^{++}
		\end{align}
		
	\noindent with $\mathbb{S}L(\lambda)\cong L(\lambda)$.
	\end{superduality}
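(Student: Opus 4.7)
The plan is to assemble the statement directly from the machinery already developed: it is essentially a one-line consequence of Theorem~\ref{infranktpc}, Proposition~\ref{prop:TPCsuperduality}, and Theorem~\ref{thm:uniquenessofTPCs}. No further representation-theoretic work is needed; we just match the combinatorial data.

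First, I would invoke Theorem~\ref{infranktpc} twice, once for $\parity=0$ and once for $\parity=1$, to conclude that $\catO^{++}_0$ and $\catO^{++}_1$ are $\mf{sl}_{\ZZ}$-TPCs of types $(\uinfty,(c_1,\ldots,c_l,0))$ and $(\uinfty,(c_1,\ldots,c_l,1))$ respectively. Under the identifications of the indexing sets $X^{++}$ with $\Xi_{\uinfty,\ud}$ given at the end of \S\ref{subsec:Set-up}, Proposition~\ref{combinatorialsuperduality} tells us that the two weight posets $\Xi_{\uinfty,(c_1,\ldots,c_l,0)}$ and $\Xi_{\uinfty,(c_1,\ldots,c_l,1)}$ are equal as posets and the corresponding $\mf{sl}_{\ZZ}$-modules are isomorphic via $v_\lambda\mapsto v_\lambda$.

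Next, I would apply Proposition~\ref{prop:TPCsuperduality} to view $\catO^{++}_0$ as an $\mf{sl}_{\ZZ}$-TPC of type $(\uinfty,(c_1,\ldots,c_l,1))$, so that both $\catO^{++}_0$ and $\catO^{++}_1$ are TPCs of the \emph{same} type. Finally, Theorem~\ref{thm:uniquenessofTPCs} produces a strongly equivariant equivalence $\mathbb{S}:\catO^{++}_0\to \catO^{++}_1$ with $\mathbb{S}L(\lambda)\cong L(\lambda)$ for every $\lambda$, which is exactly the claim.

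There is no real obstacle; the only point that demands care is book-keeping of the identification $X^{++}=\Xi_{\uinfty,\ud}$ separately for each choice of $\parity$, and checking that the simples $L(\lambda)$ on the two sides are labelled by the same element of the common poset $\Xi_{\uinfty,\underline{0}}=\Xi_{\uinfty,\underline{1}}$. All the hard analytical work (existence of projectives, construction of $E_j^{\texttt{L}}\cong E_j^{\texttt{R}}$, verification of the TPC axioms, uniqueness via the stable-module category $\mathrm{mod}\text{-}H$) has already been carried out in the preceding sections, so the corollary follows immediately.
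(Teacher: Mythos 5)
Your proposal is correct and coincides with the paper's own argument: the corollary is deduced by combining Theorem~\ref{infranktpc}, Proposition~\ref{prop:TPCsuperduality}, and Theorem~\ref{thm:uniquenessofTPCs} in exactly the order you describe. The extra book-keeping you mention about identifying $X^{++}$ with the common poset $\Xi_{\uinfty,\underline{0}}=\Xi_{\uinfty,\underline{1}}$ is the right point of care and is implicit in the paper's one-line proof.
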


\begin{proof}
	This follows immediately from Proposition \ref{prop:TPCsuperduality}, Theorem \ref{thm:uniquenessofTPCs}, and Theorem \ref{infranktpc}. The condition on irreducibles shows that this is the same functor as in \cite{CL09}.
	\end{proof}
	
\section{Graded lifts}\label{section:gradings}

We finish by describing how to construct graded lifts of $\mf{sl}_{I_r}$-TPCs and deduce a graded super duality. This section closely follows \cite[Section~5]{BLW13} and we refer the interested reader there for most definitions and proofs.

For $r\in\NN\cup\{\infty\}$, let $U_q\mf{sl}_{I_r}$ be the quantum group associated to $\mf{sl}_{I_r}$: a $\QQ(q)$-algebra with generators $\left\{ \, e_i, f_i, k_i^{\pm1} \,\middle| \, i\in I_r \, \right\}$ and well-known relations. For $\parity\in\{0,1\}$ there is a $U_q\mf{sl}_{I_r}$-module $\bigwedge_q^{\ur,\ud} V_r$ with basis $\left\{ \, v_{\lambda} \, \middle| \, \lambda\in\Xi_{\ur,\ud}\,\right\}$ as before. The action of the generators on this basis is given in \cite[(5.3)-(5.4)]{BLW13}.

Let $\mc{C}$ be a graded category with grading shift functors $Q^{\pm1}$. Write $\widehat{\mc{C}}$ for the category with the same objects as $\mc{C}$ and Hom spaces defined by
\begin{equation}
	\Hom_{\widehat{\mc{C}}}(M,N):=\bigoplus_{i\in\ZZ}\Hom_{\mc{C}}(Q^iM,N).
	\end{equation}

\noindent A graded functor $F:\mc{C}\to\mc{C}'$ between graded categories induces a functor $\widehat{F}:\widehat{\mc{C}}\to\widehat{\mc{C}'}$. A graded lift of an (ungraded) Schurian category $\overline{\mc{C}}$ is a graded abelian category $\mc{C}$ with a fully faithful functor $\nu:\widehat{\mc{C}}\to\overline{\mc{C}}$ such that $\nu$ is dense on projectives and $\nu\circ\widehat{Q}\cong\nu$.

We define a \emph{$U_q\mf{sl}_{I_r}$-tensor product categorification} of type $(\ur,\ud)$ as in \cite[Definition~5.9]{BLW13}. If $\mc{C}$ is a $U_q\mf{sl}_{I_r}$-TPC of type $(\ur,\ud)$ we denote the distinguished set of irreducibles in $\mc{C}$ by $\left\{\, L(\lambda) \,\middle|\, \lambda\in\Xi_{\ur,\ud}\,\right\}$.

\begin{gradedlifts}\label{gradedlifts}
	Take $r\in\NN\cup\{\infty\}$. Suppose that $\overline{\mathcal{C}}$ is an $\mathfrak{sl}_{I_r}$-TPC of type $(\ur,\ud)$.
	\begin{enumerate}[label=(\roman*)]
		\item \label{gradedlifts1} There exists a graded lift $\mc{C}$ of $\overline{\mc{C}}$ such that $\mc{C}$ is a $U_q\mathfrak{sl}_{I_r}$-TPC $\mathcal{C}$ of type $(\ur,\ud)$ and the graded functors $E_j$ and $F_j$ and graded natural transformations $x$ and $t$ are all graded lifts of the corresponding data for $\overline{\mathcal{C}}$. Moreover $\mathcal{C}$ is Koszul. 
		\item If $\mathcal{C}'$ is another such graded lift of $\overline{\mc{C}}$ then there is a strongly equivariant graded equivalence $\mathbb{G}:\mathcal{C}\to \mathcal{C}'$ with $\nu'\circ\widehat{\mathbb{G}}\cong \nu$ and $\mathbb{G} L\left(\lambda\right)\cong L'\left(\lambda\right)$.
		\end{enumerate}
	\end{gradedlifts}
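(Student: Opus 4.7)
The plan is to parallel \cite[Section~5]{BLW13}, where the analogous result is proved for finite $r$. That covers part~(i)--(ii) of the present theorem directly for $r<\infty$, so the only new work is the case $r=\infty$, which I would handle by adapting the stable-module machinery of Section~\ref{section:uniquenessofTPCs} to the graded setting. Throughout, I would use the quiver Hecke formulation (see Remark~\ref{QHA}) rather than affine Hecke algebras, so that the natural $\mathbb{Z}$-grading on cyclotomic KLR algebras is available.

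First I would grade the algebras $H_r$ by putting KLR generators in their standard degrees, and then observe that the embeddings $\phi_r:H_r\hookrightarrow H_{r+1}$ from Lemma~\ref{Lem4.2} are homogeneous: they are realized by tensoring with an explicit KLR diagram (Remark~\ref{Lem4.2rmk}) whose degree can be computed and absorbed into a uniform shift. Consequently the induction and restriction functors $\Ind_r^s, \Res_r^s$, the stable modules category mod-$H$, and the functor $\mathbb{U}:\overline{\mathcal{C}}\to\text{mod-}H$ all acquire graded analogues. I would then define $\mathcal{C}$ to be the category of graded modules whose underlying ungraded objects land in the essential image of $\mathbb{U}$; equivalently, take the graded projectives $Y(\lambda)$ from the finite-rank lifts (\cite[Theorem~5.10]{BLW13}), verify via the compatibility of $\phi_r$ with grading that they assemble into a stable system, and define $\mathcal{C}$ as their additive completion closed under kernels and cokernels in the graded mod-$H$. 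The categorical $U_q\mf{sl}_{\ZZ}$-action and graded natural transformations $x,t$ arise because each $F_j, E_j^{\mathtt{L}}, E_j^{\mathtt{R}}$ together with $x,t$ on the finite-rank TPCs $\mathcal{C}_r$ are already graded, and the constructions of \S\ref{subsec:cataction} commute with the grading once the connecting maps $\psi_r,\phi_r$ are shown to be homogeneous (which is forced by the degree conventions above).

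For Koszulity I would use the fact that each finite-rank subquotient $\mathcal{C}_r$ is Koszul by \cite[Theorem~5.10]{BLW13}, together with a graded analogue of Lemma~\ref{limitinO}. Concretely, an $\Ext$-computation between irreducibles $L(\lambda), L(\mu)$ in $\mathcal{C}$ should be reducible to the corresponding computation inside a finite-rank $\mathcal{C}_r$ for any $r$ large enough that $\lambda,\mu\in\Xi_{\ur,\ud}$: truncation is exact and, by Proposition~\ref{trproj}, sends projective resolutions to projective resolutions, so a minimal graded projective resolution of $L(\lambda)$ in $\mathcal{C}$ truncates to one in $\mathcal{C}_r$. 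The required concentration of $\Ext^i$ in internal degree $i$ then follows from the finite-rank Koszulity.

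Part~(ii) is standard: given another graded lift $\mathcal{C}'$, Theorem~\ref{thm:uniquenessofTPCs} supplies a strongly equivariant ungraded equivalence $\overline{\mathbb{G}}:\overline{\mathcal{C}}\to\overline{\mathcal{C}'}$, and one promotes it by choosing graded lifts of the indecomposable projectives coherently. Because $\End_{\mathcal{C}}(P(\lambda))$ is positively graded with one-dimensional degree-zero part (a consequence of Koszulity), each projective admits a unique graded lift up to shift, and these shifts can be fixed compatibly by requiring $\mathbb{G}$ to commute with the graded actions of $E_j, F_j$. The main obstacle is the Koszulity step for $r=\infty$: one must rule out that the direct-limit construction introduces stray $\Ext$ groups in unexpected internal degrees, which comes down to verifying a graded stabilization result refining Remark~\ref{rmk:deltamultstable} at the level of minimal projective resolutions.
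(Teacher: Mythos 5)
Your proposal follows essentially the same route as the paper: the paper's proof simply notes that for $r<\infty$ this is \cite[Theorem~5.11]{BLW13} and that for $r=\infty$ the argument of \cite[\S5.7--5.8]{BLW13} carries over formally unchanged once the stable-module machinery of Section~\ref{section:uniquenessofTPCs} is in place, which is exactly the adaptation you spell out. The one point the paper singles out, and which you leave implicit in your ``uniform shift'' remark, is that the defect $\text{def}(\lambda)=\frac{1}{2}(\lvert\kappa_r\rvert\cdot\lvert\kappa_r\rvert-\lvert\lambda\rvert_r\cdot\lvert\lambda\rvert_r)$ normalizing the grading conventions must be redefined using the $r$-dependent weights $\lvert\lambda\rvert_r$ and verified to be independent of the choice of $r$, by adapting \cite[Lemma~2.2]{BLW13}.
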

	
\begin{proof}
	For $r<\infty$ this is covered by \cite[Theorem~5.11]{BLW13}. For $r=\infty$ the proof is formally identical to that outlined in \cite[\S5.7 and \S5.8]{BLW13}. The only small modification needed is the definition of defect. Suppose $\lambda\in\Xi_{\uinfty,\ud}$ and take $r\in\NN$ such that $\lambda\in\Xi_{\ur,\ud}$. Define the defect of $\lambda$ by
	\begin{equation}
		\text{def}(\lambda):=\frac{1}{2}(\lvert \kappa_r\rvert\cdot\lvert\kappa_r\rvert-\lvert\lambda\rvert_r\cdot\lvert\lambda\rvert_r)
		\end{equation}
	
	\noindent where $\kappa_r\in\Xi_{\ur,\ud}$ is defined as in \S\ref{subsection:truncation}. The proof of \cite[Lemma~2.2]{BLW13} can easily be adapted to show that this is independent of the choice of $r$.
	\end{proof}
	
Recall the $\mathfrak{sl}_{\mathbb{Z}}$-TPC ${\mathcal{O}}_{\parity}^{++}$ of type $(\uinfty,\ud)$ from Section~\ref{section:catO} and the super duality equivalence ${\mathbb{S}}:{\mathcal{O}}_{0}^{++}\to {\mathcal{O}}_1^{++}$ of Theorem~\ref{superduality}. The following new `graded super duality' is an immediately corollary of the theorem above.

\begin{gradedsuperduality}
	For $\parity\in\left\{0,1\right\}$, the category ${\mathcal{O}}_{\parity}^{++}$ has a unique Koszul graded lift $^{\text{gr}}\mathcal{O}_{\parity}^{++}$ as in Theorem \ref{gradedlifts}\ref{gradedlifts1} and ${\mathbb{S}}$ lifts to a strongly equivariant graded equivalence
	\begin{align}
		^\text{gr}\mathbb{S}:\,^\text{gr}\mathcal{O}_0^{++}\longrightarrow\, ^\text{gr}\mathcal{O}_1^{++}
		\end{align}
	
	\noindent with $^\text{gr}\mathbb{S} L\left(\lambda\right)\cong L\left(\lambda\right)$.
	\end{gradedsuperduality}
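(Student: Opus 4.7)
The plan is to leverage the uniqueness statements already established: Theorem~\ref{infranktpc} identifies $\mathcal{O}^{++}_{\parity}$ as an $\mathfrak{sl}_{\ZZ}$-TPC, and Theorem~\ref{gradedlifts} promotes any such TPC to a unique Koszul graded lift. The graded super duality should then drop out of a quantum analog of Proposition~\ref{prop:TPCsuperduality} plus the uniqueness part of Theorem~\ref{gradedlifts}.

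First I would invoke Theorem~\ref{gradedlifts}\ref{gradedlifts1} on each of the $\mathfrak{sl}_{\ZZ}$-TPCs $\mathcal{O}^{++}_{\parity}$ (available via Theorem~\ref{infranktpc}) to produce the Koszul graded lifts $^{\text{gr}}\mathcal{O}^{++}_{\parity}$, which are $U_q\mathfrak{sl}_{\ZZ}$-TPCs of type $(\uinfty,\ud)$. Uniqueness up to strongly equivariant graded equivalence is part (ii) of the same theorem. This settles the first assertion.

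Next I would establish a quantum upgrade of Proposition~\ref{prop:TPCsuperduality}: every $U_q\mathfrak{sl}_{\ZZ}$-TPC of type $(\uinfty,\underline{0})$ is simultaneously a $U_q\mathfrak{sl}_{\ZZ}$-TPC of type $(\uinfty,\underline{1})$, and vice versa. This reduces, exactly as at $q=1$, to showing that the identification of posets $\Xi_{\uinfty,\underline{0}}=\Xi_{\uinfty,\underline{1}}$ from Proposition~\ref{combinatorialsuperduality} extends to a $U_q\mathfrak{sl}_{\ZZ}$-module isomorphism $\bigwedge_q^{\uinfty,\underline{0}} V \cong \bigwedge_q^{\uinfty,\underline{1}} V$ sending $v_{\lambda}\mapsto v_{\lambda}$, using the explicit action formulas in \cite[(5.3)--(5.4)]{BLW13}. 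Once this combinatorial step is in hand, $^{\text{gr}}\mathcal{O}^{++}_0$ becomes a $U_q\mathfrak{sl}_{\ZZ}$-TPC of type $(\uinfty,\underline{1})$. Composing with the super duality equivalence $\mathbb{S}$ of Corollary~\ref{superduality} then exhibits $^{\text{gr}}\mathcal{O}^{++}_0$ as a Koszul graded lift of $\mathcal{O}^{++}_1$ whose categorification structures lift those of $\mathcal{O}^{++}_1$. Applying the uniqueness part Theorem~\ref{gradedlifts}(ii) to this lift and to $^{\text{gr}}\mathcal{O}^{++}_1$ yields the desired strongly equivariant graded equivalence $^{\text{gr}}\mathbb{S}$, with $^{\text{gr}}\mathbb{S} L(\lambda)\cong L(\lambda)$ coming from the common labeling by $\Xi_{\uinfty}$.

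The main obstacle, itself mild, is the quantum combinatorial super duality. The action formulas contain signs $(-1)^{c_i}$ as well as weight-recording $k_i^{\pm 1}$, and one needs to verify that these balance under the identification $v_{\lambda}\mapsto v_{\lambda}$, and then that the resulting isomorphism descends compatibly from the finite-rank quantum wedges to the direct limit defining $\bigwedge_q^{\uinfty,\ud} V$. I would expect the verification on finite wedges to be a short case analysis on the possible $\lambda_j,\lambda_{j+1}$ patterns at $c_i=0,1$, and the passage to the semi-infinite limit to be formal, paralleling the $q=1$ argument of Proposition~\ref{combinatorialsuperduality}. Everything else amounts to formal bookkeeping already carried out in the proofs of Theorem~\ref{gradedlifts} and Corollary~\ref{superduality}.
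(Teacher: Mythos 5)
Your proposal is correct and is exactly the argument the paper intends: the paper simply declares the corollary an immediate consequence of Theorem~\ref{gradedlifts} (existence and uniqueness of the Koszul graded lifts from part (i)/(ii), plus the quantum analogue of Proposition~\ref{prop:TPCsuperduality} to transport the type-$(\uinfty,\underline{1})$ TPC structure onto $^{\text{gr}}\mathcal{O}^{++}_0$ via $\mathbb{S}$ before invoking uniqueness). Your version merely spells out the details the paper leaves implicit, including the mild combinatorial check that $v_\lambda\mapsto v_\lambda$ defines a $U_q\mathfrak{sl}_{\ZZ}$-isomorphism of the semi-infinite quantum wedge modules.
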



\begin{thebibliography}{CLW15}

\bibitem[BLW17]{BLW13}
J.~Brundan, I.~Losev, and B.~Webster.
\newblock Tensor {P}roduct {C}ategorifications and the {S}uper
  {K}azhdan--{L}usztig {C}onjecture.
\newblock {\em Int. Math. Res. Not. IMRN}, (20):6329--6410, 2017.

\bibitem[Bru03]{Bru02a}
J.~Brundan.
\newblock Kazhdan-{L}usztig polynomials and character formulae for the {L}ie
  superalgebra {$\mathfrak{gl}(m|n)$}.
\newblock {\em J. Amer. Math. Soc.}, 16(1):185--231, 2003.

\bibitem[Bru16]{Bru16}
J.~Brundan.
\newblock On the definition of {K}ac-{M}oody 2-category.
\newblock {\em Math. Ann.}, 364(1-2):353--372, 2016.

\bibitem[CL10]{CL09}
S.-J. Cheng and N.~Lam.
\newblock Irreducible characters of general linear superalgebra and super
  duality.
\newblock {\em Commun. Math. Phys.}, 298(3):645--672, 2010.

\bibitem[CL15]{CL16}
Sabin Cautis and Aaron~D. Lauda.
\newblock Implicit structure in 2-representations of quantum groups.
\newblock {\em Selecta Math. (N.S.)}, 21(1):201--244, 2015.

\bibitem[CLW15]{CLW12}
S.-J. Cheng, N.~Lam, and W.~Wang.
\newblock The {B}rundan-{K}azhdan-{L}usztig conjecture for general linear {L}ie
  superalgebras.
\newblock {\em Duke Math. J.}, 164(4):617--695, 2015.

\bibitem[CPS88]{CPS88}
E.~Cline, B.~Parshall, and L.~Scott.
\newblock Finite-dimensional algebras and highest weight categories.
\newblock {\em J. Reine Angew. Math.}, 391:85--99, 1988.

\bibitem[CR08]{CR04}
J.~Chuang and R.~Rouquier.
\newblock Derived equivalences for symmetric groups and
  {$\mathfrak{sl}_2$}-categorification.
\newblock {\em Ann. of Math. (2)}, 167(1):245--298, 2008.

\bibitem[CW08]{CW07}
S.-J. Cheng and W.~Wang.
\newblock Brundan-{K}azhdan-{L}usztig and super duality conjectures.
\newblock {\em Publ. Res. Inst. Math. Sci.}, 44(4):1219--1272, 2008.

\bibitem[CWZ08]{CWZ06}
S.-J. Cheng, W.~Wang, and R.~Zhang.
\newblock Super duality and {K}azhdan-{L}usztig polynomials.
\newblock {\em Trans. Amer. Math. Soc.}, 360(11):5883--5924, 2008.

\bibitem[KL09]{KL09}
M.~Khovanov and A.~Lauda.
\newblock A diagrammatic approach to categorification of quantum groups. {I}.
\newblock {\em Represent. Theory}, 13:309--347, 2009.

\bibitem[KL10]{KL10}
M.~Khovanov and A.~Lauda.
\newblock A categorification of quantum {${\mathfrak{sl}}(n)$}.
\newblock {\em Quantum Topol.}, 1(1):1--92, 2010.

\bibitem[LW15]{LW13}
I.~Losev and B.~Webster.
\newblock On uniqueness of tensor products of irreducible categorifications.
\newblock {\em Selecta Math. (N.S.)}, 21(2):345--377, 2015.

\bibitem[Rou08]{Rou08}
R.~Rouquier.
\newblock {2-Kac-Moody algebras}.
\newblock {\em \href{https://arxiv.org/abs/0812.5023}{arXiv:0812.5023}},
  December 2008.

\bibitem[Rou12]{Rou12}
R.~Rouquier.
\newblock Quiver {H}ecke algebras and 2-{L}ie algebras.
\newblock {\em Algebra Colloq.}, 19(2):359--410, 2012.

\end{thebibliography}
\end{document}